\def\arXiv#1{\href{http://arxiv.org/abs/#1}{arXiv:#1}}
\def\?[#1]{\textbf{[#1]}\marginpar{\Large{\textbf{??}}}}
\def\smallsection#1{\smallskip\noindent\textbf{#1}.}
\let\epsilon=\varepsilon 
\newcommand{\RR}{{\mathbb R}}
\newcommand{\NN}{{\mathbb N}}
\newcommand{\CC}{{\mathbb C}}
\newtheorem{prop}{Proposition}[section]
\newtheorem{lemm}[prop]{Lemma}
\numberwithin{equation}{section}
\def\bbbone{{\mathchoice {1\mskip-4mu {\rm{l}}} {1\mskip-4mu {\rm{l}}}
{ 1\mskip-4.5mu {\rm{l}}} { 1\mskip-5mu {\rm{l}}}}}
\DeclareMathOperator{\Res}{Res}
\DeclareMathOperator{\comp}{comp}
\DeclareMathOperator{\Ell}{ell}
\DeclareMathOperator{\Hom}{Hom}
\let\Im=\Imag
\DeclareMathOperator{\loc}{loc}
\let\Re=\Real
\DeclareMathOperator{\supp}{supp}
\DeclareMathOperator{\WF}{WF}
\DeclareMathOperator{\tr}{tr}
\def\WFh{\WF_h}
\title[Dynamical zeta functions via microlocal analysis]%
{Dynamical zeta functions for Anosov flows\\
via microlocal analysis}
\author{Semyon Dyatlov}
\email{dyatlov@math.mit.edu}
\address{Department of Mathematics, Massachusetts Institute of Technology,
77 Massachusetts Ave, Cambridge, MA 02139}
\author{Maciej Zworski}
\email{zworski@math.berkeley.edu}
\address{Department of Mathematics, University of California,
Berkeley, CA 94720, USA}
\keywords{dynamical zeta functions, Anosov flows, Pollicott--Ruelle resonances}
\subjclass[2010]{37C30,37D20}
\begin{document}

\begin{abstract}
The purpose of this paper is to give a short microlocal proof
of the meromorphic continuation of the Ruelle zeta function for
$C^\infty$  Anosov flows. More general results have
been recently proved by Giulietti--Liverani--Pollicott \cite{glp}
but our approach is different and is based on the study of the generator of the flow
as a semiclassical differential operator.

\end{abstract}

\maketitle

\addtocounter{section}{1}
\addcontentsline{toc}{section}{1. Introduction}

The purpose of this article is to provide a short 
microlocal proof of the meromorphic continuation of the Ruelle zeta function
for $ C^\infty $ Anosov flows on compact manifolds:

\medskip
\noindent
{\bf Theorem.} {\em 
Suppose $ X$ is a compact manifold and $ \varphi_t : X \to X $
is a $C^\infty$ Anosov flow with orientable stable and unstable bundles. Let $ \{ \gamma^\sharp \}  $ denote
the set of primitive orbits of $ \varphi_t $, with $ T^\sharp_\gamma $
their periods. Then the Ruelle zeta function, 
\[  \zeta_{\rm{R}} ( \lambda ) = \prod_{\gamma^\sharp } ( 1 - e^{ i \lambda T_\gamma^\sharp
} ) , \]
which converges for $ \Im \lambda \gg 1 $ has a meromorphic
continuation to $ \mathbb C $.}

\medskip

In fact the proof applies to any Anosov flow for which linearized Poincar\'e
maps $ \mathcal P_\gamma $ for
closed orbits $ \gamma $ satisfy
\begin{equation}
\label{eq:Poinc}
| \det ( I - \mathcal P_\gamma ) | = ( -1)^{q} \det ( I - \mathcal P_\gamma ) , \ \text{
  with $ q $ independent of $ \gamma$.}
\end{equation}
A class of examples is provided by $ X = S^*M $ where $ M $ is a compact orientable negatively curved manifold
with $ \varphi_t $ the geodesic flow -- see \cite[Lemma B.1]{glp}.
For methods which can be used to eliminate the orientability
assumptions see \cite[Appendix B]{glp}.

The meromorphic continuation of $ \zeta_{\rm{R}} $  was conjectured by Smale \cite{Sm} and in greater 
generality it was proved very recently by Giulietti, Liverani, and
Pollicott \cite{glp}.
Another recent perspective on dynamical zeta functions in the contact case has been provided
by Faure and Tsujii~\cite{fa-ts,fa-ts2}. 
Our motivation and proof are however different from those of~\cite{glp}: we were
investigating trace formul{\ae} for Pollicott--Ruelle resonances \cite{Po,Ru2} which 
give some lower bounds on their counting function. Sharp upper
bounds were given recently in \cite{ddz,fa-sj}.

To explain the trace formula for resonances suppose first that $ X = S^* \Gamma \backslash {\mathbb H}^2 $ is a compact
Riemann surface. Then the Selberg trace formula combined with the
Guillemin trace formula \cite{Gu} gives 
\begin{equation}
\label{eq:poison1} \sum_{  \mu \in \Res ( P )  } e^{ - i \mu  t } = \sum_{
    \gamma} 
\frac{ T_\gamma^\#     \delta (
  t - T_\gamma ) } { | \det ( I - \mathcal P_\gamma ) 
  |} , \ \  t > 0 , \end{equation}
see~\cite{Leb} for an accessible presentation in the physics literature
and~\cite{DGF} for the case of higher dimensions. 
On the left hand side $ \Res ( P ) $ is the set of resonances of $ P =
- i V $ where $ V $ is the generator of the flow,
\[ \Res ( P ) = \left\{   \mu_{j,k}  = \lambda_j  - i ( k +
\textstyle{\frac12} ) ,\  j,
k \in \mathbb N \right\} , \]
where $ \lambda_j $'s are the zeros of the Selberg zeta function
included according to their multiplicities. 
On the right hand side 
$\gamma $'s  are periodic orbits,  $ \mathcal P_\gamma $ is the
linearized  Poincar\'e map, $ T_\gamma $ is the period of $ \gamma $, and $
T_\gamma^\# $ is the primitive period.  

The point of view of Faure--Sj\"ostrand \cite{fa-sj} stresses the 
analogy between analysis of the propagator $ \varphi_{-t}^* = e^{
  - i t P } $ with scattering theory for elliptic operators on
non-compact manifolds: for flows, 
the fiber infinity of $ T^* X $ is the analogue of spatial infinity 
for
scattering on non-compact manifolds. Melrose's Poisson formula for
resonances valid for Euclidean infinities \cite{mel1,sz,zw1}  and some
hyperbolic infinities \cite{GZ} suggests that \eqref{eq:poison1}
should be valid for general Anosov flows but that 
seems to be unknown. 

In general, the validity of \eqref{eq:poison1} follows
from the finite order (as an entire function) 
of the analytic continuation of 
\begin{equation}
\label{eq:zeta1} 
 \zeta_1 ( \lambda ) :=  \exp \bigg( - \sum_\gamma {T_\gamma^\# e^{ i
  \lambda T_\gamma} \over T_\gamma | \det ( I - \mathcal P_\gamma ) |\
 }\bigg).
\end{equation}
The $ \mu$'s appearing on the left hand side of \eqref{eq:poison1}
are the zeros of $ \zeta_1$ -- 
see \cite[\S 5]{GZ} or \cite{zw1} for an indication of this simple
fact.
Under certain analyticity assumptions on  $ X $ and $ \varphi_t $,
Rugh
\cite{Ru} and Fried \cite{Fr} showed that the Ruelle zeta function 
$ \zeta_R ( \lambda ) $ is a meromorphic function of finite
order but neither \cite{glp} nor our paper suggest the validity 
of such a statement in general.


One reason to be interested in \eqref{eq:poison1} in the general 
case is the 
following consequence based on \cite[\S4]{gz}:
the counting function for the
Pollicott--Ruelle resonances in wide strips cannot be sublinear. More
precisely, there exists a constant $ C_0 $ such that 
for each $ \epsilon \in( 0,1) $,
\begin{equation}
\label{eq:low} 
  \# \{ \mu \in \Res ( P ) \; : \; \Im \mu > - {{C_0}/{\epsilon}} ,  \ 
| \mu | \leq r \} \not < r^{1-\epsilon } , \ \ r \geq C(\epsilon) ,
\end{equation}
see \cite{JiZw} and comments below.

We arrived at the proof of main Theorem while attempting to demonstrate~\eqref{eq:poison1}
for $C^\infty$ Anosov flows.
We now indicate the idea of that proof in the case of 
analytic continuation of $ \zeta_1 ( \lambda ) $ given by 
\eqref{eq:zeta1}. It 
converges for $ \Im \lambda \gg 1 $ -- see Lemma~\ref{l:dyn-4} for convergence and~\eqref{eq:Rue}
below for the connection to the Ruelle zeta function. The starting point is Guillemin's formula,
\begin{equation}
\label{eq:ABG}  \tr^\flat e^{-it P }  = 
\sum_{
    \gamma} 
\frac{ T_\gamma^\#     \delta (
  t - T_\gamma ) } { | \det ( I - \mathcal P_\gamma ) |} , \ \ t > 0
\end{equation}
where the trace is defined using distributional operations of 
pullback  by $ \iota ( t, x ) = ( t, x, x ) $ and pushforward by 
$ \pi: ( t, x ) \to t $: $ \tr^\flat e^{-it P} := \pi_* \iota^* K_{e^{-itP} }, $ where $ K_\bullet $ denotes the distributional 
kernel of an operator. The pullback is well-defined in the sense of distributions~\cite[\S8.2]{ho1} because
the wave front set of $K_{e^{-itP}}$ satisfies
\begin{equation}  
\label{wfitP}
\WF ( K_{e^{ - i t P }} ) \cap N^*(\mathbb R_t\times\Delta(X)) =
\emptyset, \ \ \ t > 0 ,
\end{equation}
where $\Delta(X)\subset X\times X$ is the diagonal and
$N^*(\mathbb R_t\times \Delta(X))\subset T^*(\mathbb R_t\times X\times X)$
is the conormal bundle. See Appendix~\ref{s:guillemin}
and~\cite[\S II]{Gu} for details.

Since 
\[ \begin{split}  \frac d {d \lambda} \log  \zeta_1(\lambda) 
= {1\over i} \sum_{ \gamma }
\frac{ T_\gamma^\# e^{ i \lambda T_\gamma}}  { |\det( I - \mathcal P_\gamma) |} 
= {1\over i}\int_0^\infty e^{ i t \lambda }\tr^\flat e^{-i t P }  dt
, \end{split} \]
it is enough to show that the right hand side has a meromorphic
continuation to $ \CC $ with simple poles and residues which are
non-negative integers.  For that it is enough to take $t_0>0$ smaller than
$T_\gamma$ for all $\gamma$ (note that $\tr^\flat e^{-itP}=0$ on $(0,t_0)$) and
consider a continuation
of  
\[   {1\over i}\int_{t_0} ^\infty e^{it \lambda } \tr^\flat  e^{ - i t P }  dt = 
{1\over i}e^{ i t_0 \lambda } \int_{0} ^\infty e^{ i t \lambda }\tr^\flat \varphi_{-t_0} ^* e^{-i t P }
 dt .\]

We now note that 
\begin{equation}
  \label{e:phoenix}
i\int_{0}^\infty e^{ i t \lambda } \varphi_{-t_0}^*  e^{ - i t P } 
dt   = \varphi_{-t_0}^* ( P
- \lambda )^{-1}\quad\text{for }\Im \lambda \gg 1 .
\end{equation}
With a justification provided by a simple approximation argument
(see the proof of~\cite[Theorem~19.4.1]{ho3} for a similar construction)
it is then sufficient to continue
\begin{equation}
\label{eq:flattrr}  \tr^\flat  \left(  \varphi_{-t_0}^* ( P
- \lambda )^{-1} \right) ,  \ \ \ \Im \lambda \gg 1 , \end{equation}
meromorphically.  As recalled in \S\ref{stuff}, $ ( P - \lambda )^{-1}
: C^\infty ( X )  \to {\mathcal D' } ( X ) $ continues meromorphically
so to check the meromorphy of \eqref{eq:flattrr} we only need to
check the analogue of the wave front set relation \eqref{wfitP}
for the distributional kernel of $ \varphi_{-t_0}^* ( P - \lambda )^{-1} $,
namely that this wave front set does not intersect $N^*(\Delta(X))$. But that follows
from an adaptation of propagation results of Duistermaat--H\"ormander
\cite[\S 26.1]{ho3}, Melrose \cite{mel}, and Vasy \cite{vasy1}. 
The Faure--Sj\"ostrand spaces \cite{fa-sj} provide the a priori
regularity which allows an application of these techniques.
In fact, we use somewhat simpler anisotropic Sobolev spaces in 
our argument and provide an alternative approach to the 
meromorphic continuation of the resolvent~-- see \S\S\ref{on-forms}, \ref{stuff}.

\medskip\noindent\textbf{Remarks}. (i) If the coefficients of the generator of the flow are merely
$C^k$ for large enough $k$, then microlocal methods presented in this paper show that the Ruelle
zeta function can still be continued meromorphically to a strip $\{\Im\lambda\geq -k/C\}$, where
$C$ is a constant independent of $k$. That follows 
immediately from the fact that wavefront set
statements in $H^s$ regularity depend only on a finite number of derivatives of the symbols involved. In \cite{glp} a more precise estimate
on the width of the strip was provided. 

\noindent (ii) One conceptual difference between~\cite{glp} and the present paper is the following.
In~\cite[(2.11), (2.12)]{glp}, the resolvent
$(P-\lambda)^{-1}$ is decomposed into two pieces, one of which corresponds to resonances in a large disk
and the other one to the rest of the resonances; using an auxiliary determinant~\cite[(2.7)]{glp}, it is
shown that it is enough to study mapping properties of large iterates of $(P-\lambda)^{-1}$,
which implies that resonances outside the disk can be ignored in a certain asymptotic regime.
In our work, however, we show directly that $(P-\lambda)^{-1}$ lies in a class where one can take
the flat trace. In terms of the expression~\eqref{e:phoenix}, this requires uniform control
of the wavefront set of $\varphi^*_{-t}$ as $t\to +\infty$. Such a statement
does not follow from the analysis for bounded times and this is where the matters
are considerably simplified by using radial source/sink estimates originating in scattering theory.

\noindent (iii) In this paper we only provide analysis at bounded frequencies, but do not discuss the behavior
of $\zeta_R(\lambda)$ as $\lambda$
goes to infinity. However, a high frequency analysis of the zeta function is possible
using the methods of semiclassical analysis, which recover the structure
of $(P-\lambda)^{-1}$ modulo $\mathcal O(|\lambda|^{-\infty})$, rather than just compact, errors.
An example is provided by the bounds on the number of Pollicott--Ruelle resonances
in~\cite{fa-sj,ddz}.

\smallsection{Some further developments} Since this paper 
was first posted \arXiv{1306.4203} related results have appeared.
In \cite{DZ} the authors showed that Pollicott--Ruelle resonances
are the limits of eigenvalues of $ V/i + i\epsilon \Delta_g $,  
as $ \epsilon \to 0+ $, where $ -\Delta_g $ is any Laplace--Beltrami
operator on $ X $. In addition, for contact Anosov flows the spectral
gap is uniform with respect to $ \epsilon $. In \cite{JiZw}, Jin--Zworski
proved that for any Anosov flow there exists a strip with 
infinitely many resonances and a counting function which 
cannot be sublinear \eqref{eq:low}. For weakly mixing flows the estimate for
the size of that strip in terms of topological pressure 
was provided by Naud in the appendix to \cite{JiZw}. Guillarmou \cite{G1}
used the methods of \cite{fa-sj} and of this paper to 
study regularity properties of cohomological equations and to provide
applications.

Meromorphic continuation (of $ ( P - \lambda)^{-1} $ and of zeta 
fucntions) for flows on non-compact manifolds (or manifolds 
with boundary) with compact hyperbolic trapped sets was recently established
by Dyatlov--Guillarmou \cite{DG}. That required 
a development of new microlocal methods as the escape on the 
cotangent bundle can occur both at fiber infinity (as in this paper) {\em and} 
at the manifold infinity. A surprising application was given 
by Guillamou \cite{G2} who established 
deformation lens rigidity for a class of manifolds including manifolds with negative curvature and strictly convex boundary. That is the first result of that
kind in which trapping is allowed. 

\smallsection{Organization of the paper}
In \S \ref{prelim} we list the preliminaries from dynamical systems
and microlocal analysis. Precise definitions, references and proofs of the
statements in \S \ref{prelim} are given in the appendices. They are all standard and reasonably well 
known but as the paper is interdisciplinary in spirit we provide detailed arguments. Except for
references to texts \cite{ho1,ho3,ev-zw}, the paper is self-contained.

In \S \ref{micro} we simultaneously prove the meromorphic continuation
and describe the wave front set of the Schwartz
kernel of $ ( \mathbf P - \lambda)^{-1} $. This is based on results about
propagation of singularities. The vector field $ H_p $ has {\em radial-like
sets}, that is invariant conic closed sets which are
sources/sinks for the flow -- they correspond to 
stable/unstable directions in the Anosov decomposition. 
Away from those
sets the results are classical and due to Duistermaat--H\"ormander -- see for
instance
\cite[\S26.1]{ho3}. At the radial points we use the more recent
propagation results of Melrose \cite{mel} and Vasy \cite{vasy1}.
The a~priori regularity needed there is provided by the properties of 
the spaces $ H_{sG} $. 
Finally, 
in \S \ref{t1} we give our proof of the main theorem which is 
a straightforward application of the results in \S \ref{micro} and the
more standard results recalled in \S \ref{prelim}. 

\smallsection{Notation} We use the following notation: $ f =  \mathcal O_\ell ( g
 )_H $ means that
$ \|f \|_H  \leq C_\ell  g $ where the norm (or any seminorm) is in the
space $ H$, and the constant $ C_\ell  $ depends on $ \ell $. When either $ \ell $ or
$ H $ are absent then the constant is universal or the estimate is
scalar, respectively. When $ G = \mathcal O_\ell ( g )_{H_1\to H_2 } $ then
the operator $ G : H_1  \to H_2 $ has its norm bounded by $ C_\ell g $.

\section{Preliminaries}
\label{prelim}

\subsection{Dynamical systems}
\label{dyns}

Let $ X $ be a compact manifold and $ \varphi_t : X \to X $ be 
a $ C^\infty $ flow, $ \varphi_t = \exp t V $,  $ V \in
C^\infty ( X; T X) $.  The flow is {\em Anosov} if the tangent space 
to $ X $ has a continuous decomposition $ T_x X = E_0 ( x ) 
\oplus E_s (x) \oplus E_u ( x)$ which is invariant, 
$ d \varphi_t ( x ) E_\bullet ( x ) = E_\bullet ( \varphi_t ( x ) ) $,
$E_0(x)=\mathbb R V(x)$,
and for some $ C$ and $ \theta > 0 $ fixed 
\begin{equation}
\label{e:anosov}
\begin{split} 
&  | d \varphi_t ( x ) v |_{\varphi_t ( x )}  \leq C e^{ - \theta
  |t| } | v |_x  , \ \ v
\in E_u ( x ) , \ \ 
t < 0 , \\
& | d \varphi_t ( x ) v |_{\varphi_t ( x ) }  \leq C e^{ - \theta |t| } |  v |_x , \ \
v \in E_s ( x ) , \ \ 
t > 0 ,
\end{split} 
\end{equation}
where $ | \bullet |_y $ is given by a smooth Riemannian metric on $
X $. Note that we do not assume that the dimensions
of $E_u$ and $E_s$ are the same.

Fix a smooth volume form $\mu$ on $X$.
We present here some basic results:
an upper bound on the number of closed trajectories of $\varphi_t$
(Lemma~\ref{l:dyn-4}) and
on the volume of the set of trajectories that return to a small neighbourhood
of their originating point after a given time (Lemma~\ref{l:dyn-3}).
These bounds are used in the proof of Lemma \ref{l:appr}. See Appendix~\ref{s:dyn} for the proofs.
The constant
$L$ is defined in~\eqref{e:flow-der-bound}.
\begin{lemm}\label{l:dyn-3}
Define the following measure on $ X \times \RR $:  $\tilde\mu=\mu \times dt $ and
fix $t_e>0$. Then
there exists $C$ such that for each $\epsilon>0,T>t_e$,
and $n=\dim X$,
\begin{equation}
  \label{e:dyn-3}
\tilde\mu\big(\{(x,t)\mid t_e\leq t\leq T,\
d(x,\varphi_t(x))\leq \epsilon\}\big)
\leq C\epsilon^ne^{nLT}.
\end{equation}
\end{lemm}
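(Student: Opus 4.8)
The plan is to establish the time-resolved bound $\tilde\mu(S_\epsilon)\le C\,T\,e^{(n-1)LT}\epsilon^n$ for all $\epsilon\le\epsilon_0$ (a small fixed threshold), where $S_\epsilon$ is the set appearing in \eqref{e:dyn-3}; from this \eqref{e:dyn-3} follows because $Te^{(n-1)LT}\le L^{-1}e^{nLT}$, while for $\epsilon>\epsilon_0$ the inequality is trivial, since then $\tilde\mu(X\times[t_e,T])=\mu(X)(T-t_e)$ is itself $\le C\epsilon^ne^{nLT}$ once $C$ is large (a linear function is dominated by an exponential one).

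First I would pass to flow boxes. As $V$ has no zeros, $X$ is covered by finitely many charts $U_i\cong(-1,1)_{x_1}\times\Sigma_i$, $\Sigma_i\subset\RR^{n-1}$, in which $V=\partial_{x_1}$, $\mu$ is comparable to Lebesgue measure, and $\Sigma_i$ is tangent to $E_s\oplus E_u$; choosing the $U_i$ small one also arranges that every orbit segment of length $\le T$ meets each $\Sigma_i$ at most $CT$ times. Fix such a box. For $y\in\Sigma_i$ let $\tau_1(y)<\tau_2(y)<\cdots$ be the times at which the forward orbit of $(0,y)$ crosses $\Sigma_i$ and $y_k(y)\in\Sigma_i$ the $k$-th crossing point. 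A direct computation in box coordinates gives $\varphi_t(x_1,y)-(x_1,y)=\big(t-\tau_k(y),\,y_k(y)-y\big)$ whenever $t+x_1$ is near $\tau_k(y)$, and $\varphi_t(x_1,y)$ lies outside the box otherwise; hence, for $\epsilon\le\epsilon_0$, a point $(x,t)$ with $x=(x_1,y)\in U_i$ can satisfy $d(x,\varphi_t(x))\le\epsilon$ only if, for some $k$ with necessarily $\tau_k(y)\ge t_e/2$, one has $|t-\tau_k(y)|\le C\epsilon$ and $|y_k(y)-y|\le C\epsilon$, with $x_1$ otherwise unrestricted. Integrating out $x_1$ (a factor $\sim1$) and $t$ (a window of length $\le C\epsilon$),
\[
 \tilde\mu\big(S_\epsilon\cap(U_i\times[t_e,T])\big)\ \le\ C\epsilon\sum_{k}\mu_{\Sigma_i}\big(\{y\in\Sigma_i\mid t_e/2\le\tau_k(y)\le T,\ |y_k(y)-y|\le C\epsilon\}\big),
\]
a sum with at most $CT$ nonzero terms for each $y$.

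The heart of the matter is the section estimate: for each $k$, $\mu_{\Sigma_i}(\{y\mid \tau_k(y)\le T,\ |y_k(y)-y|\le C\epsilon\})\le Ce^{\alpha T}\epsilon^{n-1}$ for a constant $\alpha=\alpha(n,L)\le nL$. Here the Anosov hypothesis enters through the map $y\mapsto y_k(y)$: its differential expands the unstable cone and contracts the stable cone at rates $e^{\pm\theta'\tau_k}$ coming from \eqref{e:anosov}, so for $\tau_k\ge t_e/2$ the operator $I-D(y_k)(y)$ is invertible with $\|(I-D(y_k)(y))^{-1}\|\le M_0$ uniformly in $y$ and $k$ — the eigenvalue $1$ is separated from the spectrum of the contracting and of the expanding block, and the angle between $E_s$ and $E_u$ is bounded below on the compact manifold $X$ (one uses an adapted metric to absorb the values of $\tau_k$ that are bounded below but small). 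On a ball $B\subset\Sigma_i$ of radius $r$ on which $D(y_k)$ varies by at most $(2M_0)^{-1}$, the quantitative inverse function theorem makes $y\mapsto y_k(y)-y$ bi-Lipschitz with constant $2M_0$, so the portion of $\{|y_k(y)-y|\le C\epsilon\}$ in $B$ has measure $\le C'\epsilon^{n-1}$; covering $\Sigma_i$ by $\sim r^{-(n-1)}$ such balls and using that $r=r(T)$ is bounded below by $e^{-\alpha T/(n-1)}$ — by the at most exponential growth of $d\varphi_t$ together with enough of its derivatives, which is where the constant $L$ of \eqref{e:flow-der-bound} is used, and where one should exploit that only the contracting directions of $\Sigma_i$ really need to be refined — gives the claim. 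Substituting back, $\tilde\mu(S_\epsilon)\le\sum_i C\epsilon\cdot CT\cdot Ce^{\alpha T}\epsilon^{n-1}\le C''Te^{(n-1)LT}\epsilon^n$, which is the stated estimate.

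I expect the genuine difficulty to be exactly the uniformity in $T$, which has no counterpart in the bounded-time theory: one must check that $M_0=\sup_k\|(I-D(y_k))^{-1}\|$ is finite (the only danger being small values of $\tau_k$ together with possible bad conditioning of the splitting $E_s\oplus E_u$, both controlled by compactness of $X$ and by passing to an adapted metric), and that the scale $r\sim e^{-\alpha T/(n-1)}$ on which the linearization governs $y_k(\cdot)$ does not force $\alpha$ above $nL$ — arranging this, rather than a worse multiple of $L$, is where the anisotropy of the flow and the precise content of \eqref{e:flow-der-bound} must be used. The flow-box bookkeeping of the second paragraph (the identity for $\varphi_t(x)-x$ and the bound on the number of crossings of a small section by an orbit of length $T$) is elementary but should be carried out with some care.
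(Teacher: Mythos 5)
Your proposal is in essence the paper's own argument, reformulated with Poincar\'e return maps in place of the direct covering of the manifold. Both strategies hinge on exactly the same two ingredients: the uniform invertibility, with bound independent of the return time, of $I$ minus the linearized transverse return (your $\|(I-D(y_k))^{-1}\|\le M_0$; the paper's Lemma~\ref{l:dyn-1}, which establishes $|v|\le C|(d\varphi_t(x)-\mathcal T_{x,\varphi_t(x)})v|$ on $E_u\oplus E_s$), and a covering of the transverse directions at scale $r\sim e^{-LT}$ dictated by the second-derivative bound \eqref{e:flow-der-bound}, each cell of the cover contributing $\mathcal O(\epsilon^{n-1})$ to the measure of near-returns. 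The paper packages this as a covering of $X$ itself by $N\lesssim e^{nLT}$ balls of radius $\delta e^{-LT}/2$ and a quantitative ``uniqueness of the nearby closed orbit'' lemma (Lemma~\ref{l:dyn-2}), summing a geometric series in $T'$ to dispose of the $t$ range; you instead pass to flow boxes and sections, cover the $(n-1)$-dimensional section by $\lesssim e^{(n-1)LT}$ balls, and handle $t$ by a window of length $\mathcal O(\epsilon)$ per return, picking up a factor $CT$ from the number of crossings. Your accounting actually gives the nominally sharper $CTe^{(n-1)LT}\epsilon^n$, which you then relax to $Ce^{nLT}\epsilon^n$; the paper gets the latter directly. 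Neither route is more elementary than the other.

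One imprecision worth flagging: you describe $\Sigma_i$ as ``tangent to $E_s\oplus E_u$,'' but $E_s\oplus E_u$ is only continuous (H\"older) for a $C^\infty$ Anosov flow, so no smooth section can be tangent to it. All that is really needed is a smooth section transverse to $V$, combined with the continuity of the splitting to make the block-diagonal estimate on $D(y_k)$ uniform; this is precisely what the paper's continuous family $\mathcal T_{x,y}$ in Lemma~\ref{l:dyn-1} is engineered to handle without ever invoking smoothness of the splitting. Your parenthetical about ``an adapted metric to absorb the small values of $\tau_k$'' is the right instinct, but the clean way to make the uniformity in $\tau_k\ge t_e/2$ rigorous is the iteration device in the paper's proof of Lemma~\ref{l:dyn-1}: reduce to large return times by composing the return map with itself $N$ times, where the pure Anosov estimate \eqref{e:anosov} gives the spectral separation directly.
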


In particular, by letting $\epsilon\to 0$, we get a bound on the
number of closed trajectories:
\begin{lemm}\label{l:dyn-4}
Let $N(T)$ be the number of closed trajectories of $\varphi_t$ of period no more
than $T$. Then
\begin{equation}
  \label{e:dyn-4}
N(T)\leq Ce^{(2n-1)LT}.
\end{equation}
\end{lemm}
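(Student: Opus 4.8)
The plan is to deduce Lemma~\ref{l:dyn-4} from Lemma~\ref{l:dyn-3} by letting $\epsilon \to 0$, as the remark preceding the statement indicates. The key observation is that a closed trajectory of period $T' \in [t_e, T]$ contributes, for every $\epsilon > 0$, an entire tube of points $(x,t)$ to the set appearing on the left-hand side of~\eqref{e:dyn-3}: if $\gamma$ is closed with primitive period $T^\sharp$ and $x_0 \in \gamma$, then $d(x, \varphi_t(x))$ is small whenever $x$ is close to $x_0$ and $t$ is close to a period of $\gamma$. So first I would fix a small $\epsilon_0 > 0$ and, for each closed trajectory $\gamma$ of period $\le T$ (with any choice of base point $x_0 = x_0(\gamma) \in \gamma$), identify a set $U_\gamma \times I_\gamma \subset X \times \RR$ with $U_\gamma$ a ball of radius $r$ around $x_0$ and $I_\gamma$ an interval of length $r$ around the period $T_\gamma$, such that $U_\gamma \times I_\gamma$ is contained in the set in~\eqref{e:dyn-3} for $\epsilon = \epsilon_0$; here $r = r(\epsilon_0)$ comes from uniform continuity of $(x,t) \mapsto d(x,\varphi_t(x))$ on the compact set $X \times [t_e, T]$, together with the bound~\eqref{e:flow-der-bound} on the flow derivative to make $r$ explicit. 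Each such box has $\tilde\mu$-measure bounded below by $c\, r^{n+1}$ for a constant $c$ depending only on $\mu$ and the metric.

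Next I would arrange that these boxes are essentially disjoint so that their measures add up. The cleanest way is to restrict attention to \emph{primitive} closed trajectories and to normalize $t_e$ small: distinct primitive orbits $\gamma \ne \gamma'$ are disjoint compact subsets of $X$, hence at positive distance, so for $r$ small enough $U_\gamma \cap U_{\gamma'} = \emptyset$ and the boxes are disjoint in the first factor. Summing $\tilde\mu(U_\gamma \times I_\gamma) \ge c\, r^{n+1}$ over all primitive orbits of period $\le T$ and comparing with~\eqref{e:dyn-3} gives
\begin{equation*}
  N^\sharp(T) \cdot c\, r^{n+1} \le C \epsilon_0^n e^{nLT},
\end{equation*}
where $N^\sharp(T)$ counts primitive orbits. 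This already yields an exponential bound on $N^\sharp(T)$; a non-primitive closed trajectory of period $\le T$ is the $m$-fold iterate of a primitive one of period $\le T/m$, so $N(T) \le \sum_{m \ge 1} N^\sharp(T/m) \le C' T e^{(n/?)LT}$, and the geometric decay in $m$ makes the sum converge to something of the same exponential type. The exponent obtained this way is governed by $r(\epsilon_0)$, and the point $(2n-1)$ in~\eqref{e:dyn-4} is where the quantitative dependence $r \sim \epsilon_0 e^{-LT}$ enters: since $r$ itself must shrink like $e^{-LT}$ to control $d(x,\varphi_t(x))$ uniformly over $t \le T$, the factor $r^{-(n+1)} \sim e^{(n+1)LT}$ combines with $e^{nLT}$, and after absorbing constants and optimizing one lands on $e^{(2n-1)LT}$ (the constant, rather than $2n$, coming from the fact that one of the $n+1$ box dimensions — the time direction — contributes only a bounded, not exponentially small, factor, shaving off one power of $e^{LT}$).

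The main obstacle, and the step that requires care rather than routine estimation, is making the box dimension $r$ and its $T$-dependence precise enough to get the sharp exponent $2n-1$: one needs a uniform statement of the form ``if $d(x, x_0) \le \epsilon_0 e^{-LT}$ and $|t - T_\gamma| \le \epsilon_0 e^{-LT}$ then $d(x, \varphi_t(x)) \le \epsilon_0$'' valid for all closed orbits of period up to $T$ simultaneously, which is exactly what the Grönwall-type bound~\eqref{e:flow-der-bound} on $\|d\varphi_t\|$ provides — flowing the perturbation forward for time $\le T$ expands distances by at most $e^{LT}$. The disjointness of the primitive-orbit boxes and the summation over iterates are then bookkeeping. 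I would present this as: (1) the tube construction via uniform continuity and~\eqref{e:flow-der-bound}; (2) disjointness and the lower volume bound $c\,r^{n+1}$; (3) comparison with~\eqref{e:dyn-3} to bound $N^\sharp(T)$; (4) passage from primitive to all closed orbits.
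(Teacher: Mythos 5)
Your overall strategy — cover each closed trajectory by a small set in $X\times\RR$, show these sets are disjoint, lower-bound their $\tilde\mu$-measure, and compare against the upper bound~\eqref{e:dyn-3} — is exactly the paper's. But your choice of ``box'' is not quite the right geometric object, and this is precisely where your exponent bookkeeping breaks down. You take $U_\gamma$ to be a ball of radius $r\sim\epsilon_0 e^{-LT}$ around a single base point $x_0(\gamma)$. That gives $\tilde\mu(U_\gamma\times I_\gamma)\gtrsim r^{n+1}$, and the comparison with~\eqref{e:dyn-3} yields $N^\sharp(T)\lesssim \epsilon_0^{-1}e^{(2n+1)LT}$. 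Using a time interval of fixed width $\epsilon_0$ instead of $r$ (the ``shaving off one power'' you describe) only improves this to $e^{2nLT}$; the arithmetic in your parenthetical does not reach $2n-1$, and no amount of optimizing constants will, because a ball of radius $\sim e^{-LT}$ around a point simply has spatial volume $\sim e^{-nLT}$, which is one power of $e^{-LT}$ too small.

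The paper closes the gap by replacing the ball around $x_0$ with a tubular neighbourhood of the \emph{entire} closed orbit: as in~\eqref{e:dyn-4-int}, the set is
\[
\bigl\{(x,t):\ |t-t_0|\le\epsilon,\ d(x,\gamma(s))\le\epsilon e^{-Lt_0}\ \text{for some }s\bigr\},
\]
where $t_0=T_\gamma\le T$. Because the orbit itself has length bounded below (by something comparable to $t_e$), the direction tangent to the orbit contributes a factor $\gtrsim 1$ to the spatial volume, and only the $n-1$ directions transverse to the orbit scale like $\epsilon e^{-Lt_0}$. This gives $\tilde\mu\gtrsim\epsilon\cdot(\epsilon e^{-Lt_0})^{n-1}=\epsilon^n e^{-(n-1)Lt_0}$, and comparing with $C\epsilon^n e^{nLT}$ from Lemma~\ref{l:dyn-3} produces $N(T)\le Ce^{(2n-1)LT}$. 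So the missing power of $e^{LT}$ in your argument is exactly the ratio (orbit length)/$r$, recovered by using the tube rather than a ball. Two smaller points: (i) the containment inside the set of Lemma~\ref{l:dyn-3} is by~\eqref{e:flow-dist-bound}, which you correctly identify; (ii) the restriction to primitive orbits followed by summation over iterates is unnecessary — the tubes for all closed trajectories of period $\le T$ (including iterates) are pairwise disjoint for $\epsilon$ small depending on $T$, since iterates of a common primitive orbit are separated in the $t$-coordinate while geometrically distinct orbits of period $\le T$ are separated in $X$ (this is the content of Lemma~\ref{l:dyn-2}, which is also what makes your claim about ``$r$ small enough'' quantitatively safe).
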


\subsection{Trace identities}
\label{s:trace-identities}

Let $\varphi_t=e^{tV}$ be as in~\S\ref{dyns}
and  $\mathbf P:C^\infty(X;\mathcal E)\to C^\infty(X;\mathcal E)$ be defined
by $\mathbf P={1\over i}\mathcal L_V$ on the vector bundle of differential forms
of all orders on $X$, see~\eqref{e:the-P}. Let
$\mathcal E^k_0$ be the smooth invariant subbundle of $\mathcal E$ given by
all differential $k$-forms $\mathbf u$ satisfying $\iota_V\mathbf u=0$,
where $\iota$ denotes the contraction operator by a vector field~-- see also~\cite[(3.5)]{glp}.
We recall the trace formula of Guillemin \cite[Theorem 8,
(II.22)]{Gu} which is valid for any flow with nondegenerate periodic trajectories~-- see Appendix~\ref{s:guillemin}
for a self-contained proof in the Anosov case. In our notation it says that 
\begin{equation}
\label{eq:guill}   \tr^\flat e^{-i t\mathbf P } |_{ C^\infty ( X ;
\mathcal E^k_0 )} 
 =   \sum_{ \gamma }
\frac{ T_\gamma^\#   \tr ( \wedge^k \mathcal P_\gamma ) \,  \delta (
  t - T_\gamma ) } { | \det ( I - \mathcal P_\gamma ) 
  |} , \ \  t > 0 , \end{equation}
where $\gamma $'s  are periodic orbits,  $ \mathcal P_\gamma:=d\varphi_{-T_\gamma}|_{E_s\oplus E_u} $ is the
linearized  Poincar\'e map, $ T_\gamma $ is the period of $ \gamma $, and $
T_\gamma^\# $ is the primitive period. See~\S\ref{tft} for definition and properties
of the flat trace $\tr^\flat$.
By the Anosov property, and since we use negative times in the definition
of $\mathcal P_\gamma$, the eigenvalues of $\mathcal P_\gamma|_{E_u}$ satisfy $|\mu|<1$, therefore
$\det(I-\mathcal P_\gamma|_{E_u})>0$. Similarly $\det(I-\mathcal P_\gamma^{-1}|_{E_s})>0$.
If $E_s$ is orientable, then 
$\det(\mathcal P_\gamma|_{E_s})=\det(d\varphi_{-T_\gamma}|_{E_s})>0$;
since $\det(I-\mathcal P_\gamma|_{E_s})=\det(-\mathcal P_\gamma|_{E_s})\det(I-\mathcal P_\gamma^{-1}|_{E_s})$,
\[   
 | \det ( I - \mathcal P_\gamma ) | = ( -1 )^{ \dim E_s } \det ( I
- \mathcal P_\gamma ) , 
\]
that is \eqref{eq:Poinc} holds with $ q = \dim E_s $. We now assume~\eqref{eq:Poinc} for
some integer $ q $.

Consequently we relate the expressions on the right hand side of
\eqref{eq:guill} to the Ruelle zeta function using
\[  \det ( I - \mathcal P_\gamma ) = \sum_{ k=0}^{ n-1} ( - 1 )^k \tr
\wedge^k \mathcal P_\gamma .\]
This is a standard argument going back to Ruelle \cite{Ru1} but the 
particular determinants here seem to be rather different than the one
related to his transfer operators:
\begin{equation}
\label{eq:Rue}
\begin{split}
\zeta_{\rm R} ( \lambda ) & = \prod_{\gamma^\# } ( 1 - e^{ i \lambda T_\gamma^\#
} )= \exp \left( - \sum_{\gamma^\#} \sum_{ m= 1}^\infty \frac 1 m
  e^{ i \lambda m T_\gamma^\# } \right) \\
& = \exp \left( - \sum_{\gamma} { T^\#_\gamma  e^{ i \lambda
      T_\gamma} }/{ T_\gamma } \right) 
= \prod_{  k=0}^{n-1} \exp \left( -
\sum_\gamma  \frac{ T_\gamma^\# e^{ i \lambda T_\gamma
    } 
\tr \wedge^{ k} \mathcal P_\gamma
 }
{ T_\gamma |  \det(I - \mathcal P_\gamma)  | } \right)^{(-1)^ { k +  q }} \end{split}
\end{equation}
We note that thanks to Lemma \ref{l:dyn-4} the sums on the right hand
side converge for $ \Im \lambda \gg 1 $.

\subsection{Microlocal and semiclassical analyses}
\label{wfs}

In this section we present concepts and facts from microlocal/semiclassical
analysis which are needed in the proofs. Their proofs and detailed
references are provided in Appendix~\ref{a:wf}.

Let $ X $ be a manifold. For a distribution $ u\in\mathcal D'(X) $,
a phase space description of its singularities is given by the wave front
set $\WF(u)$, a closed conic subset of $T^*X\setminus 0$.
A more general object is the semiclassical wave front set defined
using a (small) asymptotic parameter $ h$ for $h$-tempered families of
distributions $ \{ u ( h ) \}_{ 0 <h < 1 }$:   $ \WFh ( u ) \subset
\overline T^* X $ where $ \overline T^* X $ is the fiber-radially compactified cotangent 
bundle, a manifold with interior $T^*X$ and boundary $\partial\overline T^*X=S^*X
=(T^*X\setminus 0)/\mathbb R^+$, the cosphere bundle. In addition to singularities, $\WFh$ measures oscillations on the
$h$-scale. 
The relation of the two wave front sets is the following:
if $u$ is an $h$-independent distribution, then
\begin{equation}
  \label{e:wf-wf-h}
\WF(u)=\WFh(u)\cap (T^*X\setminus 0), 
\end{equation}
see \S\ref{a:wf-2} and for a more general statement, \cite[(8.4.8)]{ev-zw}.

For operators we define the
wave front set $\WF'(B)$ (or $\WFh'(B)$ for $h$-dependent families of
operators) using the Schwartz kernel  -- see \eqref{e:wf'}.  This way
$ \WF' ( I ) = \Delta ( T^* X ) $, the diagonal in $ T^*X  \times T^*X
$, rather than $ N^* \Delta ( X ) $, the conormal bundle to the
diagonal in $ X \times X $. 

The following result, proved in~\S\ref{a:wf-2}, will allow us to calculate $ \WFh' ( ( P
-\lambda)^{-1} ) $, and thus, by~\eqref{e:wf-wf-h}, $\WF'( (P-\lambda)^{-1})$. It states that away from the fiber infinity,
the semiclassical wave front set of an operator is characterized using its
action on distributions:
\begin{lemm}
\label{l:wfs}
Let $B:C_{\rm{c}}^\infty(X)\to \mathcal D'(Y)$ be an $h$-tempered family of operators.
A point $(y,\eta,x,\xi)\in T^*(Y\times X)$
does not lie in $\WFh'(B)$ if and only if there exist neighbourhoods $U$ of $(x,\xi)$
and $V$ of $(y,\eta)$ such that
\begin{equation}
  \label{e:wf-char-op}
\WFh(f)\subset U\ \Longrightarrow\
\WFh(Bf)\cap V=\emptyset
\end{equation}
for each $h$-tempered family of functions
$f(h)\in C_{\rm{c}}^\infty(X)$.
\end{lemm}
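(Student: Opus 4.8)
\textbf{Proof plan for Lemma~\ref{l:wfs}.}
The plan is to reduce the statement to the standard characterization of the semiclassical wave front set of a distribution in terms of testing against semiclassical symbols (or, equivalently, against compactly supported families with controlled $\WFh$), applied to the Schwartz kernel $K_B$ of $B$. Recall that by definition $\WFh'(B)$ is obtained from $\WFh(K_B)\subset\overline T^*(Y\times X)$ by the sign flip in the $X$-fibers, so that $(y,\eta,x,\xi)\notin\WFh'(B)$ precisely when $(y,\eta,x,-\xi)\notin\WFh(K_B)$. The point $(x,\xi)$ in the statement lies in $T^*X$ (not at fiber infinity), so I only need the portion of $\WFh(K_B)$ away from the $X$-fiber-infinity, where the relevant quantizations are genuine compactly supported semiclassical pseudodifferential operators in the $X$ variable with smooth compactly supported symbols.

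First I would prove the ``only if'' direction. Assume $(y,\eta,x,\xi)\notin\WFh'(B)$. Choose $U\ni(x,\xi)$, $V\ni(y,\eta)$ small enough and a cutoff $a\in C_{\rm c}^\infty(T^*X)$ with $\supp a\subset U$, $a=1$ near $(x,\xi)$, and similarly $b\in C_{\rm c}^\infty(T^*Y)$ supported in $V$; then $\Op_h(b)\,B\,\Op_h(\bar a)^*=\mathcal O(h^\infty)$ as an operator (in any suitable operator norm between Sobolev spaces), because its kernel is a cutoff of $K_B$ to a neighbourhood of a point missing $\WFh(K_B)$. Now if $f$ is $h$-tempered with $\WFh(f)\subset\{a=1\}\subset U$, write $f=\Op_h(\bar a)^* f + (I-\Op_h(\bar a)^*)f$; the second term is $\mathcal O(h^\infty)$ in every Sobolev norm by the elliptic parametrix / microlocality of $\Op_h$ on $\WFh(f)$, hence so is $B$ applied to it, while the first term gives $\Op_h(b)B\Op_h(\bar a)^* f=\mathcal O(h^\infty)$. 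Therefore $\Op_h(b)Bf=\mathcal O(h^\infty)_{H^{-N}\to\cdots}$, which means $\WFh(Bf)\cap V'=\emptyset$ for the slightly smaller set $V'$ where $b=1$; shrinking $V$ to $V'$ gives~\eqref{e:wf-char-op}.

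For the ``if'' direction, suppose neighbourhoods $U,V$ as in the statement exist; I want to conclude $(y,\eta,x,-\xi)\notin\WFh(K_B)$. The idea is to feed $B$ a family of coherent states concentrating at $(x,\xi)$. Fix $\chi\in C_{\rm c}^\infty(X)$ equal to $1$ near $x$ with small support, and set $f_h(x')=h^{-n/4}\chi(x')e^{i\langle x'-x,\xi\rangle/h}e^{-|x'-x|^2/(2h)}$ (in local coordinates near $x$); then $\WFh(f_h)=\{(x,\xi)\}\subset U$ once the support of $\chi$ is small, and $f_h$ is normalized in $L^2$. By hypothesis $\WFh(Bf_h)\cap V=\emptyset$, i.e.\ $\|\Op_h(b)Bf_h\|=\mathcal O(h^\infty)$ for $b$ supported in $V$, $b=1$ near $(y,\eta)$. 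One then checks that pairing $K_B$ against such coherent states (tensored with a matching coherent state at $(y,\eta)$) computes, up to $\mathcal O(h^\infty)$, a standard FBI/wave-packet transform of $K_B$ at the point $(y,\eta,x,-\xi)$, and the decay just obtained forces that transform to be $\mathcal O(h^\infty)$ on a neighbourhood, which is exactly the statement $(y,\eta,x,-\xi)\notin\WFh(K_B)$. The bookkeeping is cleanest if one phrases $\WFh$ of a distribution via the wave-packet transform from the start, as in~\cite[Chapter~8]{ev-zw}, and quotes that the wave-packet characterization of $\WFh(K_B)$ in the first set of variables coincides with the operator-norm condition $\|\Op_h(b)Bf_h\|=\mathcal O(h^\infty)$ uniformly for coherent states $f_h$ at $(x,\xi)$.

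The main obstacle is the ``if'' direction: one must pass from a qualitative hypothesis quantified over \emph{all} $h$-tempered families $f$ with $\WFh(f)\subset U$ to a statement about the kernel $K_B$ at a single phase-space point, and this requires a ``test family'' (coherent states) that both has the prescribed small wave front set and is quantitatively rich enough to probe $K_B$ — together with the uniformity in $h$ needed to conclude a neighbourhood (not just pointwise) statement. Making the uniformity honest, and verifying that the $\mathcal O(h^\infty)$ in the hypothesis can be taken locally uniform in $(x,\xi)\in U$, is where the argument needs care; everything else is the standard calculus of compactly supported semiclassical pseudodifferential operators recalled in Appendix~\ref{a:wf}.
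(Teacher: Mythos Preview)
Your plan is sound and both directions go through, but the paper argues more directly by working with the Fourier-transform characterization of $\WFh$ rather than the FBI/wave-packet one. For the ``if'' direction (the one you flag as delicate), the paper tests $B$ not against Gaussian coherent states but against cut-off \emph{plane waves} $f(x')=\chi_x(x')e^{ix'\cdot\xi'/h}$ with $\xi'$ ranging over a neighbourhood $U_\xi$ of $\xi$ and allowed to depend on $h$. These are $h$-tempered with $\WFh(f)\subset U$, and the elementary identity
\[
\mathcal F_h(\chi_yBf)(\eta')=\mathcal F_h K'_B(\eta',-\xi'),\qquad K'_B:=\chi_yK_B\chi_x,
\]
shows that the hypothesis forces the semiclassical Fourier transform of the localized kernel to be $\mathcal O(h^\infty)$ on the product $V_\eta\times U_\xi$, which is \emph{precisely} the Fourier definition of $(y,\eta,x,-\xi)\notin\WFh(K_B)$. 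The uniformity concern you raise is absorbed by letting $(\eta',\xi')$ be an arbitrary $h$-dependent point of $V_\eta\times U_\xi$; a one-line contradiction argument then upgrades the per-family $\mathcal O(h^\infty)$ to a uniform one. No wave-packet transform or extra characterization of $\WFh$ is invoked. For the ``only if'' direction you split $f$ using a pseudodifferential cutoff $\Op_h(\bar a)^*$; the paper instead writes $f$ by Fourier inversion as an integral over $U_\xi$ of the same plane waves, plus an $\mathcal O(h^\infty)_{C_{\rm c}^\infty}$ remainder, and reads off the conclusion from the decay of $\mathcal F_hK'_B$ on $V_\eta\times U_\xi$. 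Your route via coherent states is a legitimate alternative and is natural if one takes the FBI description of $\WFh$ as primary; the paper's route is shorter because it never leaves the Fourier characterization already set up in \S\ref{a:wf-2}.
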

We next state several semiclassical estimates used in \S \ref{micro}.
To be able to work with differential forms, we consider
a semiclassical pseudodifferential operator $\mathbf
P\in\Psi^k_h(X;\Hom(\mathcal E
))$
acting on $h$-tempered families of
distributions $\mathbf u(h)\in \mathcal D'(X;\mathcal E)$ with values in a vector bundle $\mathcal E$ over $X$.
For simplicity, we assume below that $X$ is a compact manifold.
We provide estimates in semiclassical Sobolev spaces $H_h^m(X,
\mathcal E )$ (denoted $ H_h^m $ for simplicity) and the corresponding
restrictions on wave front sets. Each of the estimates~\eqref{e:elliptic-est}, \eqref{e:hyperbolic-est},
\eqref{e:radial1-est}, \eqref{e:radial2-est}
is understood as follows: if the right-hand side is well-defined, then for $h$ small enough,
the left-hand side is well-defined and the estimate holds. For example, in the case
of~\eqref{e:hyperbolic-est}, if $\mathbf P\mathbf u\in H^m_h $ and
$B\mathbf u\in H^m_h $,
then we have $A\mathbf u\in H^m_h $. See \S\ref{a:wf-3} for the proofs.
\begin{prop}\label{l:elliptic} (Elliptic estimate) Let $\mathbf u(h)\in \mathcal 
D'(X;\mathcal E)$ be $h$-tempered.
Then:

1. If $A\in\Psi^0_h(X)$ (acting on $\mathcal D'(X;\mathcal E)$ diagonally)
and $\mathbf P$ is elliptic on $\WFh(A)$, then
for each $m$,
\begin{equation}
  \label{e:elliptic-est}
\|A\mathbf u\|_{H^m_h(X;\mathcal E)}\leq C\|\mathbf P \mathbf
u\|_{H^{m-k}_h(X;\mathcal E)}+\mathcal O(h^\infty). 
\end{equation}

2. If $\Ell_h(\mathbf P)\subset \overline T^*X$ denotes the elliptic set 
of $\mathbf P$, then
\begin{equation}
  \label{e:elliptic-wf}
\WFh(\mathbf u)\cap \Ell_h(\mathbf P)\subset\WFh(\mathbf P\mathbf u).
\end{equation}
\end{prop}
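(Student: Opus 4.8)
The statement to prove is the semiclassical elliptic estimate: Part 1, that $\|A\mathbf u\|_{H^m_h}\leq C\|\mathbf P\mathbf u\|_{H^{m-k}_h}+\mathcal O(h^\infty)$ when $\mathbf P$ is elliptic on $\WFh(A)$; and Part 2, the wavefront inclusion $\WFh(\mathbf u)\cap\Ell_h(\mathbf P)\subset\WFh(\mathbf P\mathbf u)$.

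The plan is to build an approximate inverse (a microlocal parametrix) for $\mathbf P$ over the region where $A$ is supported microlocally. First I would choose $\tilde A\in\Psi^0_h(X;\Hom(\mathcal E))$ with $\WFh(\tilde A)$ contained in $\Ell_h(\mathbf P)$, equal to the identity microlocally near $\WFh(A)$, so that $A=\tilde A A+\mathcal O(h^\infty)_{\Psi^{-\infty}}$. Since $\mathbf P$ is elliptic on $\WFh(\tilde A)$, the standard semiclassical symbol calculus lets me construct $Q\in\Psi^{-k}_h(X;\Hom(\mathcal E))$ with $Q\mathbf P=\tilde A+\mathcal O(h^\infty)_{\Psi^{-\infty}}$: one inverts the principal symbol on a conic neighbourhood of $\WFh(\tilde A)$ (where it is invertible, with inverse of the opposite order) and removes successive lower-order errors by the usual Neumann-series/asymptotic-summation argument. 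Applying $Q$ to $\mathbf P\mathbf u$ and using boundedness of $Q:H^{m-k}_h\to H^m_h$ and of the $\mathcal O(h^\infty)_{\Psi^{-\infty}}$ remainder acting on the $h$-tempered family $\mathbf u$ (which only costs a fixed power of $h^{-1}$, absorbed into $\mathcal O(h^\infty)$), gives $\|A\mathbf u\|_{H^m_h}=\|\tilde A A\mathbf u\|_{H^m_h}+\mathcal O(h^\infty)\leq \|Q\mathbf P\mathbf u\|_{H^m_h}+\mathcal O(h^\infty)\leq C\|\mathbf P\mathbf u\|_{H^{m-k}_h}+\mathcal O(h^\infty)$, which is Part 1.

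For Part 2, I would deduce it from Part 1 by a localization argument. Let $(x_0,\xi_0)\in\Ell_h(\mathbf P)\setminus\WFh(\mathbf P\mathbf u)$; I want to show $(x_0,\xi_0)\notin\WFh(\mathbf u)$, i.e.\ find $A\in\Psi^0_h$ elliptic at $(x_0,\xi_0)$ with $A\mathbf u=\mathcal O(h^\infty)$ in every $H^m_h$. Choose $A$ with $\WFh(A)$ a small enough neighbourhood of $(x_0,\xi_0)$ that it lies inside $\Ell_h(\mathbf P)$ and is disjoint from $\WFh(\mathbf P\mathbf u)$; the latter forces, by definition of the semiclassical wavefront set, $B\mathbf P\mathbf u=\mathcal O(h^\infty)$ in all $H^s_h$ for some $B\in\Psi^0_h$ elliptic on $\WFh(A)$, hence in particular $\|\mathbf P\mathbf u\|$ is $\mathcal O(h^\infty)$ after microlocalizing near $\WFh(A)$ — I insert such a cutoff before applying Part 1 so that the right-hand side is genuinely $\mathcal O(h^\infty)$. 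Then Part 1 gives $\|A\mathbf u\|_{H^m_h}=\mathcal O(h^\infty)$ for every $m$, which is exactly the statement that $(x_0,\xi_0)\notin\WFh(\mathbf u)$.

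The only real subtlety — and the step I would be most careful about — is the bookkeeping of remainders on $h$-tempered families: the various $\mathcal O(h^\infty)_{\Psi^{-\infty}}$ operators, when applied to $\mathbf u$, yield $\mathcal O(h^\infty)$ in $H^m_h$ only because $h$-temperedness bounds $\|\mathbf u\|_{H^{-N}_h}$ by $Ch^{-N}$ for some fixed $N$, and a smoothing operator with all seminorms $\mathcal O(h^\infty)$ maps $H^{-N}_h\to H^m_h$ with norm $\mathcal O(h^\infty)$; one must check this uniformly. Everything else — the symbolic parametrix construction, asymptotic summation, mapping properties between semiclassical Sobolev spaces, and the vector-bundle $\Hom(\mathcal E)$-valued calculus, which works verbatim since ellipticity means invertibility of an endomorphism-valued principal symbol — is routine and I would simply cite the calculus developed in the appendix (\S\ref{a:wf-3}). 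On a compact $X$ there are no issues at spatial infinity, so the cutoffs can be taken globally defined.
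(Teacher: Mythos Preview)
Your proposal is correct and follows essentially the same route as the paper: construct a microlocal parametrix $\mathbf Q\in\Psi^{-k}_h$ by inverting the principal symbol near $\WFh(A)$ and iterating away the remainder via an asymptotic Neumann series, then deduce Part~2 from Part~1 by inserting a cutoff $B$ with $\WFh(B)\cap\WFh(\mathbf P\mathbf u)=\emptyset$ and applying Part~1 to $B\mathbf P$. One small bookkeeping slip: your chain of inequalities passes from $\|\tilde A A\mathbf u\|$ to $\|Q\mathbf P\mathbf u\|$, but $Q\mathbf P=\tilde A+\mathcal O(h^\infty)$ only gives control of $\tilde A\mathbf u$, not $\tilde A A\mathbf u$; the clean version is to use $A\tilde A=A+\mathcal O(h^\infty)_{\Psi^{-\infty}}$ (since $\tilde A=I$ microlocally on $\WFh(A)$), so that $A\mathbf u=A\tilde A\mathbf u+\mathcal O(h^\infty)=AQ\mathbf P\mathbf u+\mathcal O(h^\infty)$ and then bound $\|AQ\mathbf P\mathbf u\|_{H^m_h}\leq C\|\mathbf P\mathbf u\|_{H^{m-k}_h}$.
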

\begin{figure}
\includegraphics{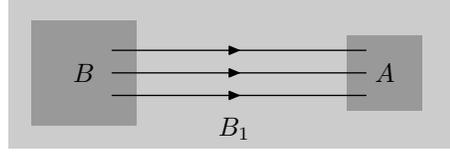}
\caption{The assumptions of Proposition~\ref{l:hyperbolic}, displaying the wave front
sets of $A,B,B_1$ and the flow lines of $H_p$.}
\label{f:hyperbolic}
\end{figure}
\begin{prop}\label{l:hyperbolic} (Propagation of singularities)
Assume that $\mathbf P\in\Psi^1_h(X;\Hom(\mathcal E
))$ and the semiclassical principal symbol
$$
\sigma_h(\mathbf P)\in S^1_h(X;\Hom(\mathcal E
))/hS^0_h(X;\Hom(\mathcal E
))
$$
is diagonal with entries%
\footnote{Strictly speaking, this means that $p-iq$ is some representative of the
equivalence class $\sigma_h(\mathbf P)$ satisfying the specified conditions.}
$p-iq$, with $p\in S^1(X;\mathbb R)$ independent of $h$
and $q\geq 0$ everywhere.
Assume also that $p$ is homogeneous of degree $1$ in $\xi$, for $|\xi|$ large enough.
Let $e^{tH_p}$ be the Hamiltonian flow of $p$ on $\overline T^*X$
and $\mathbf u(h)\in \mathcal D'(X;\mathcal E)$ be an $h$-tempered
family of distributions.
Then (see Figure~\ref{f:hyperbolic}):

1. Assume that $A,B,B_1\in\Psi_h^0(X)$ and for each $(x,\xi)\in\WFh(A)$,
there exists $T\geq 0$ with $e^{-TH_p}(x,\xi)\in\Ell_h(B)$
and $e^{tH_p}(x,\xi)\in \Ell_h(B_1)$ for $t\in [-T,0]$.
Then for each $m$,
\begin{equation}
  \label{e:hyperbolic-est}
\|A\mathbf u\|_{H^m_h(X;\mathcal E)}\leq C\|B\mathbf u\|_{H^m_h(X;\mathcal E)}
+Ch^{-1}\|B_1\mathbf P\mathbf u\|_{H^m_h(X;\mathcal E)}+\mathcal O(h^\infty).
\end{equation}

2. If $\gamma(t)$ is a flow line of $H_p$, then for each $T>0$,
\begin{equation}
  \label{e:hyperbolic-wf}
\gamma(-T)\not\in\WFh(\mathbf u),\
\gamma([-T,0])\cap\WFh(\mathbf P\mathbf u)=\emptyset\ \Longrightarrow\
\gamma(0)\not\in\WFh(\mathbf u).
\end{equation}
\end{prop}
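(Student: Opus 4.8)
The approach is the standard positive commutator (energy) argument underlying the Duistermaat--H\"ormander propagation theorem \cite[\S26.1]{ho3}, in its semiclassical form and with a complex absorbing term of the favourable sign; Part~2 is then deduced from Part~1 by a suitable choice of cutoffs. Conjugating $\mathbf P$, $\mathbf u$, $A$, $B$, $B_1$ by a fixed elliptic operator $\Lambda\in\Psi^m_h(X)$ acting diagonally on $\mathcal E$ leaves the principal symbol $p-iq$ of $\mathbf P$, and hence all the hypotheses, unchanged, so it suffices to treat the case $m=0$ of Part~1, i.e.\ the $L^2$ bound $\|A\mathbf u\|\le C\|B\mathbf u\|+Ch^{-1}\|B_1\mathbf P\mathbf u\|+\mathcal O(h^\infty)$. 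The a~priori assumption that the right-hand side is well defined (together with a routine regularisation of $\mathbf u$) makes the manipulations below rigorous.

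The heart of the matter is the construction of an \emph{escape function}. For $(x_0,\xi_0)\in\WFh(A)$ let $T=T(x_0,\xi_0)\ge0$ be as in the hypothesis, so that the flow segment $\{e^{tH_p}(x_0,\xi_0):t\in[-T,0]\}$ lies in $\Ell_h(B_1)$ and ends in $\Ell_h(B)$. One builds $g\in S^0_h(X)$, $g\ge0$, whose essential support is a small conic neighbourhood of the union of these segments over $(x_0,\xi_0)\in\WFh(A)$ -- in particular contained in $\Ell_h(B_1)$ -- such that $g$ is elliptic on $\WFh(A)$ and, modulo lower order errors,
\[
H_p g\le -c\,g+\chi_B^2,\qquad c>0,
\]
where $\chi_B\in S^0_h$ is elliptic on $e^{-TH_p}(\WFh(A))$, hence microlocally controlled by $B$. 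Compactness of $\WFh(A)$ lets one patch the local constructions, and the homogeneity of $p$ in $\xi$ for $|\xi|$ large is what makes the construction uniform up to fibre infinity.

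Set $G:=\Op_h(g)\in\Psi^0_h(X)$, so $G=G^*$ and $G\ge-Ch$ by the sharp G\aa rding inequality. The positive commutator identity expresses $\tfrac{2}{h}\Im\langle\mathbf P\mathbf u,G\mathbf u\rangle$ as $-\langle\Op_h(H_p g)\mathbf u,\mathbf u\rangle$, plus a contribution of the imaginary part $-iq$ of $\sigma_h(\mathbf P)$ -- which, since $q\ge0$, is the absorptive term and has the sign compatible with the forward direction of propagation fixed by the hypothesis, so it only strengthens the estimate and is discarded -- plus symbol calculus errors. Inserting $-H_p g\ge cg-\chi_B^2$, the term $c\langle\Op_h(g)\mathbf u,\mathbf u\rangle$ is, by ellipticity of $g$ on $\WFh(A)$, bounded below by $c'\|A\mathbf u\|^2-\mathcal O(h^\infty)$ (here the $h$-temperedness of $\mathbf u$ is used) and also absorbs the error terms for $h$ small; the $\chi_B$ term is bounded by $C\|B\mathbf u\|^2$ via the elliptic estimate (Proposition~\ref{l:elliptic}); and the pairing $\tfrac{2}{h}\Im\langle\mathbf P\mathbf u,G\mathbf u\rangle$ is estimated by Cauchy--Schwarz and Young's inequality after inserting $B_1$ (legitimate since $\WFh(G)\subset\Ell_h(B_1)$, and modulo $\mathcal O(h^\infty)$), its $\mathbf u$-quadratic part being absorbed into $c\langle\Op_h(g)\mathbf u,\mathbf u\rangle$ and its $\mathbf P\mathbf u$-quadratic part giving $Ch^{-2}\|B_1\mathbf P\mathbf u\|^2$. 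Taking square roots yields the $L^2$ estimate and hence Part~1.

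For Part~2, assume $\gamma(-T)\notin\WFh(\mathbf u)$ and $\gamma([-T,0])\cap\WFh(\mathbf P\mathbf u)=\emptyset$. Choose $A\in\Psi^0_h(X)$ elliptic at $\gamma(0)$ with $\WFh(A)$ a sufficiently small neighbourhood of $\gamma(0)$; by continuity of the flow one finds $B,B_1\in\Psi^0_h(X)$ with $\WFh(B)$ a neighbourhood of $\gamma(-T)$ disjoint from $\WFh(\mathbf u)$ and $\WFh(B_1)$ a neighbourhood of $\gamma([-T,0])$ disjoint from $\WFh(\mathbf P\mathbf u)$, for which the hypotheses of Part~1 hold. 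Then $\|B\mathbf u\|_{H^m_h}=\mathcal O(h^\infty)$ and $\|B_1\mathbf P\mathbf u\|_{H^m_h}=\mathcal O(h^\infty)$ for all $m$, so Part~1 gives $\|A\mathbf u\|_{H^m_h}=\mathcal O(h^\infty)$, i.e.\ $\gamma(0)\notin\WFh(\mathbf u)$. The main obstacle is the escape function construction -- producing $g$ with $H_p g\le -cg+\chi_B^2$ along the whole flowout of $\WFh(A)$ and controlling the symbol orders near fibre infinity; that the $-iq$ term carries the right sign is a secondary, routine point.
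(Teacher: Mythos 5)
Your overall strategy -- positive commutator with an escape function localized along the backward flowout of $\WFh(A)$, sharp G\aa rding for the sign, and deduction of Part~2 from Part~1 -- is exactly the paper's. The reduction to $m=0$ by conjugation with an elliptic $\Lambda\in\Psi^m_h$ is a legitimate alternative to the paper's device of building the weight $\langle\xi\rangle^m$ into the regularizer $S_\epsilon$, and the rest of the ingredients (escape function construction, discarding the absorptive $q$-term, inserting $B_1$ before Cauchy--Schwarz) match.

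However, there is a genuine gap in the claim that the error terms are simply ``absorbed into $c\langle\Op_h(g)\mathbf u,\mathbf u\rangle$ for $h$ small.'' The sharp G\aa rding remainder applied to the symbol $-H_pg-cg+\chi_B^2\geq 0$ is of the form $\mathcal O(h)\|B_1\mathbf u\|^2_{H^{-1/2}_h}$: it is controlled by $\mathbf u$ microlocally on all of $\Ell_h(B_1)$ (the support of the escape function), not merely on $\WFh(A)$. Since $B_1$ is necessarily elliptic on a strictly larger region than $\WFh(A)$, this term is not bounded by $\|A\mathbf u\|^2$ and hence cannot be absorbed into the positive term on the left. The paper deals with this by first establishing the weaker bound~\eqref{e:hyperbolic-int}, which retains an $\mathcal O(h^{1/2})\|B_1\mathbf u\|_{H^{m-1/2}_h}$ error, and then iterating that estimate finitely many times (applying it with $B_1$ in place of $A$), using $h$-temperedness of $\mathbf u$ to push the residual below any power of $h$. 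Some version of this iteration (or an equivalent bootstrapping device) is essential and is missing from your argument. A secondary point: commuting with a single self-adjoint $G=\Op_h(g)$ rather than a square $F^*_\epsilon F_\epsilon$ makes the positive term $\langle G\mathbf u,\mathbf u\rangle$ rather than $\|F_\epsilon\mathbf u\|^2$; to get the claimed lower bound by $c'\|A\mathbf u\|^2$ one should take $g=f^2$ (equivalently commute with $F^*F$), which is also what makes the absorption of the $\mathbf u$-quadratic part of the Cauchy--Schwarz step work cleanly. Finally, the ``routine regularisation'' you invoke is precisely the $\langle\epsilon\xi\rangle^{-1}$ factor and the $\epsilon\to0$ weak-compactness argument in the paper; it is needed to make sense of $\langle\mathbf P\mathbf u,G\mathbf u\rangle$ before $A\mathbf u\in H^m_h$ is known, and is not entirely cosmetic.
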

Propagation of singularities states in particular
that if $\mathbf P\mathbf u=\mathcal O(h^\infty)_{C^\infty}$ and
$\mathbf u=\mathcal O(1)_{H^m_h}$ microlocally near some $(x,\xi)\in\overline T^*X$, then
$\mathbf u=\mathcal O(1)_{H^m_h}$ microlocally near $e^{tH_p}(x,\xi)$ for $t\geq 0$; in other words, regularity can be propagated
forward along the Hamiltonian flow lines. (If $q\leq 0$ instead, then regularity could be propagated backward.)
We next state less standard estimates guaranteeing
regularity of $\mathbf u$ near sources/sinks,
provided that $\mathbf u$ lies in a sufficiently high Sobolev space.

Denote by $ \kappa : T^* X \setminus 0
\to S^*X=\partial \overline T^* X $
the natural projection map.
Let $p$ be a real-valued function on $T^*X$; for simplicity, we assume that it is homogeneous of degree 1 in $\xi$.
Assume that $ L \subset T^* X \setminus 0 $ is a closed conic set
invariant under the flow $e^{tH_p}$ and
there exists an open conic neighbourhood $U$ of 
$ L$ with the following properties for some constant $\theta>0$:
\begin{equation}
\label{eq:defL}
\begin{split}
d\big( \kappa(e^{-tH_p}(U) ) , \kappa ( L ) \big) \to 0 
 \ &\text{ as $  t \to +\infty $;} \\
   (x,\xi)\in U   \ \Longrightarrow \ 
 |e^{-tH_p}(x,\xi)| \geq C^{-1} e^{\theta t} |\xi|,  \ &\text{ for any 
norm on the fibers.}
\end{split}
\end{equation}
We call $ L $ a {\em radial source}. A {\em radial sink} is 
defined analogously, reversing the direction of the flow. 
The following propositions come essentially  from the work of
Melrose~\cite[Propositions~9,10]{mel} and
Vasy~\cite[Propositions~2.3,2.4]{vasy1}.
The first one shows that
for sufficiently regular distributions
the wave front set at radial sources is controlled. 

\begin{figure}
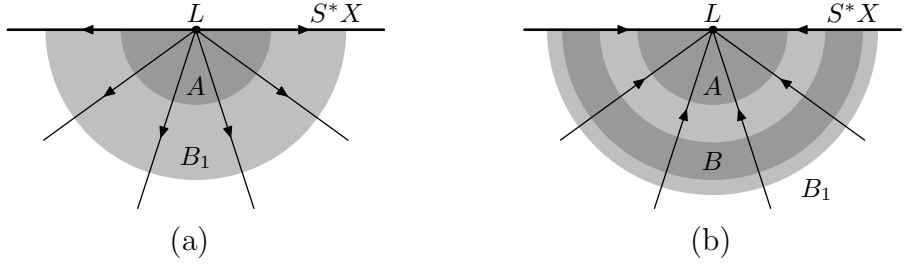

\includegraphics{zeta.3}
\qquad\qquad
\includegraphics{zeta.4}
\hbox to\hsize{\hss (a)\qquad\qquad\qquad\hss (b) \hss}
\caption{(a) The assumptions of Proposition~\ref{l:radial1}.
(b) The assumptions of Proposition \ref{l:radial2}. Here $S^*X$ is
the boundary of $\overline T^*X$ and the flow lines of $H_p$
are pictured.}
\label{f:radial}
\end{figure}
\begin{prop}\label{l:radial1}
Assume that $\mathbf P\in\Psi^1_h(X;\Hom(\mathcal E 
))$ is as in Proposition~\ref{l:hyperbolic}
and $L\subset T^*X\setminus 0$ is a radial source. Then there exists $m_0>0$ such that
(see Figure~\ref{f:radial}(a))

1. For each $B_1\in\Psi^0_h(X)$ elliptic on $\kappa(L)\subset S^*X=\partial \overline T^*X$, there exists $A\in\Psi^0_h(X)$
elliptic on $\kappa(L)$ such that if
$\mathbf u(h)\in\mathcal D'(X;\mathcal E)$ is $h$-tempered, then for each $m\geq m_0$,
\begin{equation}
  \label{e:radial1-est}
A\mathbf u\in H^{m_0}_h\ \Longrightarrow\
\|A\mathbf u\|_{H^m_h}\leq Ch^{-1}\|B_1\mathbf P \mathbf u\|_{H^m_h}+\mathcal O(h^\infty).
\end{equation}

2. If $\mathbf u(h)\in\mathcal D'(X;\mathcal E)$ is $h$-tempered and
 $B_1\in\Psi^0_h(X)$ is elliptic on $\kappa(L)$, then
\begin{equation}
  \label{e:radial1-wf}
B_1\mathbf u\in H^{m_0}_h,\
\WFh(\mathbf P\mathbf u)\cap \kappa(L)=\emptyset\ \Longrightarrow\
\WFh(\mathbf u)\cap\kappa(L)=\emptyset.
\end{equation}
\end{prop}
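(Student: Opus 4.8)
The plan is to prove Proposition~\ref{l:radial1} by constructing a positive commutator (radial-point) estimate at the source $L$, following the scheme of Melrose and Vasy. First I would work in a conic neighbourhood of $L$ and choose a good symbol. Since $L$ is a radial source, there is a smooth function $\rho\in S^1$, homogeneous of degree $1$, with $\rho>0$ away from the zero section and $H_p\rho \geq \theta\rho/2$ near $L$ (such $\rho$ exists because, by~\eqref{eq:defL}, fibre-norms grow exponentially along $e^{-tH_p}$, i.e.\ decay along $e^{tH_p}$ backward, so in forward time near a source the natural ``size'' function is expanding); one also picks a cutoff $\chi$ supported in $U$, equal to $1$ near $L$, and with $H_p\chi$ supported where the backward flow has already left a smaller neighbourhood of $L$ — this is the standard ``escaping'' cutoff. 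The commutant is then $a = \chi\, \rho^{\,2s-1}$ (in the semiclassical normalisation, a symbol in $S^{2s-1}_h$), and $A\in\Psi^0_h$ is taken to be a quantisation of a $0$-th order cutoff elliptic on $\kappa(L)$ built from the same $\chi$.

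The core step is the commutator computation. Write $\mathbf P$ with principal symbol $p - iq$, $q\geq 0$. Let $G=\Op_h(a)$ with $a$ as above, and expand
\[
\frac{i}{h}\big( G^*G\, \mathbf P - \mathbf P^*\, G^*G\big) = \Op_h\!\big( H_p(a^2)\big) + \Op_h\!\big( \tfrac{2}{h} q\, a^2\big) + \text{(lower order)} + \text{(off-}L\text{ terms)}.
\]
On the support of $\chi$ near $L$ one has $H_p(a^2) = (2s-1)\,\chi^2\rho^{2s-2}H_p\rho + 2\chi\rho^{2s-1}H_p\chi \leq (2s-1)\theta\,\chi^2\rho^{2s-1} \cdot \rho^{-1}\cdot\tfrac12 \cdot(\text{positive})$, so for $s<m_0:=\tfrac12$ — more precisely for $s$ below the threshold determined by the source rate $\theta$ and the subprincipal/bundle terms of $\mathbf P$ — the leading term $H_p(a^2)$ has a definite sign: it is $\leq -c\,\rho^{2s-1}$ near $L$, modulo a term supported on $\supp(H_p\chi)$, which lies away from $L$ and is controlled by the given cutoff $B_1$. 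The $q$-term contributes with a favourable sign (this is why $q\geq 0$ is needed, orienting the estimate toward the source rather than the sink). Pairing the commutator identity with $\mathbf u$, integrating by parts, and applying the sharp G\r{a}rding inequality to the definite-sign term yields, schematically,
\[
\|A\mathbf u\|_{H^m_h}^2 \leq C h^{-1} \|B_1 \mathbf P\mathbf u\|_{H^m_h}\|A\mathbf u\|_{H^m_h} + C\|B_1' \mathbf u\|_{H^{m'}_h}^2 + \mathcal O(h^\infty),
\]
with $m' < m$ the a priori regularity. The hypothesis $A\mathbf u\in H^{m_0}_h$ is exactly what lets one start a regularisation/iteration argument (insert $\rho^{-N}$ regularisers, prove the estimate uniformly, remove them) to bootstrap from $H^{m_0}_h$ up to any $H^m_h$, $m\geq m_0$.

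For part~2 I would deduce the wavefront statement from part~1 by a localisation argument: if $B_1\mathbf u\in H^{m_0}_h$ and $\WFh(\mathbf P\mathbf u)\cap\kappa(L)=\emptyset$, then $B_1\mathbf P\mathbf u = \mathcal O(h^\infty)$ microlocally near $\kappa(L)$, so the right-hand side of~\eqref{e:radial1-est} is $\mathcal O(h^\infty)$ in every $H^m_h$; since $A$ is elliptic on $\kappa(L)$, the elliptic estimate of Proposition~\ref{l:elliptic} converts $\|A\mathbf u\|_{H^m_h}=\mathcal O(h^\infty)$ for all $m$ into $\WFh(\mathbf u)\cap\kappa(L)=\emptyset$. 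The main obstacle I anticipate is tracking the threshold: getting the sign of $H_p(a^2)$ right requires that $2s-1$ times the (positive) normal expansion rate of $p$ dominate the contributions of the imaginary part $q$, of the subprincipal symbol, and — since $\mathbf P$ acts on the bundle $\mathcal E$ of differential forms — of the non-scalar zeroth-order part $\mathcal L_V$ restricted to the conormal directions at $L$; this fixes $m_0$ and must be done uniformly in $h$ and compatibly with the regulariser insertion. The off-$L$ error terms (everything on $\supp H_p\chi$) are harmless because the source structure~\eqref{eq:defL} guarantees the backward flow from there immediately leaves a neighbourhood of $L$, so those terms are absorbed by $B_1$ after a further application of the propagation estimate~\eqref{e:hyperbolic-est}.
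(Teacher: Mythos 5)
Your overall strategy — a positive commutator estimate at the radial set built from an escape function, sharp G\aa rding, a regularisation/iteration step to get from the a priori $H^{m_0}_h$ control up to $H^m_h$, and then deducing part~2 from part~1 by ellipticity — is exactly the one the paper follows (the paper's escape functions come from Lemma~\ref{l:radialesc}, and the iteration is the passage from~\eqref{e:radial1-int} to~\eqref{e:radial1-int-2}). However, there is a sign error in the construction of the weight which flips the threshold direction and breaks the core step. You assert $H_p\rho\geq\theta\rho/2$ near $L$ with $\rho>0$ homogeneous of degree~$1$, justifying it by saying the ``size'' function is expanding in forward time near a source. But definition~\eqref{eq:defL} says $|e^{-tH_p}(x,\xi)|\geq C^{-1}e^{\theta t}|\xi|$ as $t\to+\infty$: the fibre-norm \emph{grows} under the \emph{backward} flow $e^{-tH_p}$, hence \emph{contracts} under the forward flow $e^{tH_p}$. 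So near a radial source $H_p|\xi|<0$, and the correct degree-one escape function satisfies $H_pf_1\leq -cf_1$ on $U$ (this is precisely part~2 of Lemma~\ref{l:radialesc}).

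The error propagates: with your (incorrect) $H_p\rho>0$ and commutant $a=\chi\,\rho^{2s-1}$, you deduce that $H_p(a^2)\leq 0$ requires $2s-1<0$, i.e.\ the low-regularity regime $s<\tfrac12$, and you explicitly set $m_0:=\tfrac12$ as an upper threshold. The proposition, however, asserts the estimate for $m\geq m_0$, the \emph{high}-regularity regime — consistent with Vasy's threshold at sources, and mirrored by the low-regularity hypothesis $m\leq -m_0$ at sinks in Proposition~\ref{l:radial2}. With the correct sign $H_p\rho<0$, one needs $2s-1>0$, so the good sign of $H_p(a^2)$ requires $m$ large. This is exactly the term $m\,H_pf_2/f_2$ in the paper's~\eqref{e:f-eps-der1}: since $H_pf_2\leq -cf_2<0$ on $\supp f$, taking $m$ large drives $f_\epsilon H_pf_\epsilon + C_0f_\epsilon^2\leq 0$ modulo the sharp G\aa rding error. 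A secondary imprecision: the $q$-term is not ``favourable'' at the source in a way it would not be at the sink — the standing assumption $q\geq 0$ is needed at both radial sets (see the remark following Proposition~\ref{l:radial2}); what distinguishes source from sink is the sign of $H_p$ applied to the radial weight, which is where your error lies. Fixing that sign and recomputing the threshold direction would essentially reproduce the paper's proof.
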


The second result shows that for sufficiently low
regularity we have a propagation result at radial sinks analogous to
\eqref{e:hyperbolic-est}.

%
\begin{prop}\label{l:radial2}
Assume that $\mathbf P\in\Psi^1_h(X;\Hom(\mathcal E 
))$ is as in Proposition~\ref{l:hyperbolic}
and $L\subset T^*X\setminus 0$ is a radial sink. Then there exists $m_0>0$ such that
for each $B_1\in\Psi^0_h(X)$ elliptic on $\kappa(L)$, there exists
$A\in\Psi^0_h(X)$ elliptic on $\kappa(L)$ and $B\in\Psi^0_h(X)$ with
$\WFh(B)\subset\Ell_h(B_1)\setminus \kappa(L)$, such that if $\mathbf u(h)\in\mathcal D'(X;\mathcal E)$
is $h$-tempered, then for each $m\leq -m_0$ (see Figure~\ref{f:radial}(b))
\begin{equation}
  \label{e:radial2-est}
\|A\mathbf u\|_{H^m_h}\leq C\|B\mathbf u\|_{H^m_h}+Ch^{-1}\|B_1\mathbf P\mathbf u\|_{H^m_h}
+\mathcal O(h^\infty).
\end{equation}
\end{prop}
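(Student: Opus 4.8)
The plan is to prove \eqref{e:radial2-est} by a positive commutator argument in the style of Melrose \cite{mel} and Vasy \cite{vasy1}; it is the low regularity counterpart of Proposition~\ref{l:radial1}, the difference being that at a sink one propagates Sobolev regularity \emph{into} $L$ in low regularity ($m\leq-m_0$) rather than obtaining it for free in high regularity. Since the whole estimate is microlocal near $L$, I would first record the dynamical input: reversing the flow in \eqref{eq:defL}, after shrinking the conic neighbourhood $U$ of $L$ (and adapting the fibre norm if necessary) we may assume $U\subset\Ell_h(B_1)$, that $e^{tH_p}(U)\subset U$ with $\kappa(e^{tH_p}(U))\to\kappa(L)$ as $t\to+\infty$, and that $H_p\log\langle\xi\rangle\geq\theta_0>0$ on $U$ for a fixed $\theta_0$ (fibres expand along the forward flow at a sink).

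Next comes the construction of the commutant. I would build $\mathsf A\in\Psi^m_h(X)$ with $\WFh(\mathsf A)\subset U$, elliptic on $\kappa(L)$, with principal symbol $\langle\xi\rangle^m\chi$ for a cutoff $\chi\in S^0$ supported in $U$ and equal to $1$ near $\kappa(L)$, arranged so that
\begin{equation*}
H_p\bigl(\sigma_h(\mathsf A^*\mathsf A)\bigr)=2m\,(H_p\log\langle\xi\rangle)\,\sigma_h(\mathsf A^*\mathsf A)+\langle\xi\rangle^{2m}H_p(\chi^2)\leq-c\,\sigma_h(\mathsf A^*\mathsf A)+\langle\xi\rangle^{2m}b_0^2
\end{equation*}
on $U$, for some $c>0$ and some $b_0\in S^0$ supported in a punctured conic neighbourhood of $\kappa(L)$ contained in $\Ell_h(B_1)\setminus\kappa(L)$; the operators of the statement are then $A:=\Op_h(\chi)\in\Psi^0_h$ and $B:=\Op_h(b_0)\in\Psi^0_h$. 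Near $\kappa(L)$ one has $\chi=1$, hence $H_p(\chi^2)=0$ and $b_0=0$ there, so the inequality amounts to $2m\,H_p\log\langle\xi\rangle+c+(\text{subprincipal contribution of }\mathbf P)\leq0$; using $m<0$ and $H_p\log\langle\xi\rangle\geq\theta_0$, this holds provided $2|m|\theta_0$ exceeds $c$ plus the contribution of the subprincipal symbol of $\mathbf P$ on the compact set $\kappa(L)$ -- a bound depending only on $\mathbf P$ -- and this fixes $m_0$. On $\{0<\chi<1\}$, which is a compact subset of $\Ell_h(B_1)\setminus\kappa(L)$, the terms $H_p(\chi^2)$ and $2m\,(H_p\log\langle\xi\rangle)\chi^2$ are bounded and are simply absorbed into $\langle\xi\rangle^{2m}b_0^2$ by taking $b_0$ large enough there.

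Granting this, the estimate follows from the standard commutator identity: writing $\mathbf P=\Op_h(p)-i\Op_h(q)+h\Psi^0_h$ with $q\geq0$, one has, modulo $\mathcal O(h^\infty)$,
\begin{equation*}
\tfrac2h\Im\langle\mathbf P\mathbf u,\mathsf A^*\mathsf A\mathbf u\rangle=\langle\Op_h\bigl(H_p\sigma_h(\mathsf A^*\mathsf A)\bigr)\mathbf u,\mathbf u\rangle-\tfrac2h\langle\Op_h(q)\mathsf A\mathbf u,\mathsf A\mathbf u\rangle+\langle R\mathbf u,\mathbf u\rangle
\end{equation*}
with $R\in h\Psi^{2m}_h$. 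The $q$-term has the favourable sign (its principal part is $\leq0$ since $q\geq0$), so it is discarded; the sharp G\aa rding inequality applied to the first term and to the negligible remainder, together with the symbol inequality above, yields
\begin{equation*}
c\,\|A\mathbf u\|_{H^m_h}^2\leq-\tfrac2h\Im\langle\mathbf P\mathbf u,\mathsf A^*\mathsf A\mathbf u\rangle+C\|B\mathbf u\|_{H^m_h}^2+\mathcal O(h^\infty)
\end{equation*}
for $h$ small. Inserting $B_1$ (elliptic on $\WFh(\mathsf A)$) via the elliptic estimate and estimating $\tfrac2h|\langle\mathbf P\mathbf u,\mathsf A^*\mathsf A\mathbf u\rangle|\leq\tfrac c2\|A\mathbf u\|_{H^m_h}^2+Ch^{-2}\|B_1\mathbf P\mathbf u\|_{H^m_h}^2$ by Cauchy--Schwarz gives the square of \eqref{e:radial2-est}. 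To make all the pairings a priori finite for an $h$-tempered $\mathbf u$, one first runs the computation with $\langle\xi\rangle^m$ replaced by the order $-\infty$ weight $\langle\xi\rangle^m(1+\epsilon\langle\xi\rangle^2)^{-N}$, obtains bounds uniform in $\epsilon\in(0,1)$, and then lets $\epsilon\to0$; it is precisely here that the hypothesis $m\leq-m_0$ is used. Full details are carried out in \S\ref{a:wf-3}.

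The main obstacle I anticipate is the joint construction in the second paragraph: producing the commutant with the stated sign of $H_p(\sigma_h(\mathsf A^*\mathsf A))$ and, interlocked with it, computing the subprincipal symbol of $\mathbf P=\tfrac1i\mathcal L_V$ acting on the bundle $\mathcal E$ of forms precisely enough to verify that $m_0$ can be chosen independently of $\mathbf u$ and $h$. This trade-off between the fibre-radial weight $\langle\xi\rangle^m$ and the expansion rate $\theta$ of the flow at the sink is exactly what forces the one-sided condition $m\leq-m_0$, as in \cite[Prop.~2.4]{vasy1}.
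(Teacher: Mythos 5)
Your proposal takes essentially the same route as the paper: a positive commutator argument with a commutant of order $m$, a favourable sign of $H_p$ on the weight coming from the sink condition, sharp G\aa rding to convert the symbol inequality into an operator inequality, and an exhausting regularization $S_\epsilon$ to justify the pairings for merely $h$-tempered $\mathbf u$. Your ``adapted fibre norm'' is precisely the homogeneous escape function $f_2$ of Lemma~\ref{l:radialesc}, part~2 (with the sign of $p$ reversed), which gives the pointwise inequality $H_p f_2\geq cf_2$ on $U$; the raw norm $\langle\xi\rangle$ need not satisfy $H_p\log\langle\xi\rangle\geq\theta_0>0$ at every point, only in time-average, so you cannot actually skip that construction. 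Two smaller remarks. First, the sharp G\aa rding step does not produce an $\mathcal O(h^\infty)$ remainder directly: it leaves a term of size $\mathcal O(h)\|B_1\mathbf u\|_{H^{m-1/2}_h}^2$, and the paper then closes the argument by iterating this half-order improvement against propagation of singularities (exactly as in the reduction from~\eqref{e:hyperbolic-int} to~\eqref{e:hyperbolic-est} and from~\eqref{e:radial1-int} to~\eqref{e:radial1-int-2}); your write-up silently converts this into $\mathcal O(h^\infty)$, which is the one real gap. Second, the ``obstacle'' you anticipate about the subprincipal symbol of $\mathbf P=\tfrac1i\mathcal L_V$ does not arise in this form: since $\sigma_h(\Im\mathbf P)=-q\leq 0$, the corresponding term is dominated via sharp G\aa rding by a constant $C_1$ (independent of the escape function) times $h\|F_\epsilon\mathbf u\|^2$, and one simply takes $C_0$, and hence $m_0$, large enough relative to that universal constant; no explicit computation of the subprincipal symbol on forms is needed.
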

\noindent\textbf{Remarks}. (i) In the case $q=0$, we can replace $\mathbf P$ by $-\mathbf P$
in Propositions~\ref{l:radial1} and~\ref{l:radial2} to make both of them apply to sources and sinks.

\noindent (ii) The precise value of the threshold $m_0$ can be computed
by being slightly more careful in the proofs (using a regularizer $\langle\epsilon\xi\rangle^{-\delta}$
for small $\delta>0$ in place of $\langle\epsilon\xi\rangle^{-1}$ and an additional
regularization procedure to justify~\eqref{e:hyperbolic-commutator})~-- see
for example~\cite[Propositions~2.3,2.4]{vasy1}.

\subsection{The flat trace}
\label{tft}

We now consider an operator $B:C^\infty(X)\to \mathcal D'(X)$
satisfying
\begin{equation}
  \label{e:no-diagonal}
\WF'(B)\cap\Delta(T^*X) 
=\emptyset,\quad
\Delta(T^*X):=\{(x,\xi,x,\xi)\mid (x,\xi)\in T^*X \}, 
\end{equation}
on a compact manifold $X$, and define the flat trace
\begin{equation}
  \label{e:flat-trace}
\tr^\flat B:= \int_X (\iota^* K_B)(x)\,dx , \quad
\iota:x\mapsto (x,x).
\end{equation}
Here $K_B$ is the Schwartz kernel of $X$ with respect to the density
$dx$ on $X$; the trace $\tr^\flat B$ does not depend on the choice
of the density.
The pullback $\iota^* K_B\in\mathcal D'(X)$ of the Schwartz kernel
$K_B\in\mathcal D'(X\times X)$ is defined under the condition~\eqref{e:no-diagonal}
as in \cite[Theorem~8.2.4]{ho1}.

To obtain a concrete expression for $ \tr^\flat B $ 
we use traces of regularized operators.
For that we introduce a family of mollifiers. 
Let $ d ( x, y ) $ be the geodesic distance 
for $ ( x, y ) $ in a neighbourhood of $ \Delta( X ) \subset 
X \times X$ with respect to some fixed Riemannian metric.
Let $ \psi \in C^\infty_{\rm{c}} ( \mathbb R , [0,1]) $ be equal to 1
near 0.  We define
$    E_\epsilon  : {\mathcal D}' ( X ) \to C^{\infty } ( X ) $, 
\begin{gather}
\label{eq:eeps2}
\begin{gathered} 
E_\epsilon u ( x ) = \int_X E_\epsilon ( x,y) u ( y )\, dy , \ \
E_\epsilon ( x, y ) =  \frac{ 1 } { F_\epsilon ( x )} \,
 \psi\left( \frac{ d ( x, y ) }{ \epsilon } 
\right)  , \end{gathered}
\end{gather} 
where 
$ F_\epsilon ( x ) $ is chosen so that $ E_\epsilon (1) = 1$ and
satisfies $ \epsilon^n / C \leq F_\epsilon( x ) \leq C 
\epsilon^n $. We have
\begin{equation}
\label{eq:eeps1}   E_\epsilon \in \Psi^{-\infty } ( X ) , \ \ 
E_\epsilon \longrightarrow I \ \text{ in $ \Psi^{0+} ( X ) $.} 
\end{equation}

The next lemma shows that the flat trace is well approximated by regular
traces -- see \S\ref{a:wf-1} for a proof.
\begin{lemm}
\label{l:appr1}
For $B$ satisfying~\eqref{e:no-diagonal} and $E_\epsilon$ given by
\eqref{eq:eeps2}  we have
\begin{equation}
\label{eq:appr1}
\tr^\flat B=\lim_{\epsilon\to 0} \tr E_\epsilon B E_\epsilon 
\end{equation}
where the trace on the right hand side is well-defined since 
$E_{\epsilon} BE_{\epsilon} $ is smoothing and thus trace class
on $L^2(X)$.
\end{lemm}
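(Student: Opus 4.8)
The plan is to prove the approximation identity $\tr^\flat B = \lim_{\epsilon\to 0}\tr(E_\epsilon B E_\epsilon)$ in two stages: first unravel the right-hand side as a pairing of Schwartz kernels, then use the wavefront hypothesis \eqref{e:no-diagonal} together with \eqref{eq:eeps1} and H\"ormander's continuity of distributional pullback to pass to the limit. Since $E_\epsilon\in\Psi^{-\infty}(X)$, the operator $E_\epsilon B E_\epsilon$ is smoothing, hence trace class on $L^2(X)$, and its trace equals $\int_X K_{E_\epsilon B E_\epsilon}(x,x)\,dx$. Writing out the composition, $K_{E_\epsilon B E_\epsilon}(x,x) = \int\int E_\epsilon(x,y)\,K_B(y,y')\,E_\epsilon(y',x)\,dy\,dy'$, so that
\[
\tr(E_\epsilon B E_\epsilon) = \big\langle K_B,\, \Phi_\epsilon\big\rangle,\qquad
\Phi_\epsilon(y,y') := \int_X E_\epsilon(x,y)\,E_\epsilon(y',x)\,dx .
\]
The function $\Phi_\epsilon\in C^\infty(X\times X)$ is a mollified version of the delta distribution on the diagonal: since $E_\epsilon(x,y)$ is supported in $d(x,y)\leq C\epsilon$ and integrates to $1$ in each variable, $\Phi_\epsilon$ is supported in an $O(\epsilon)$-neighbourhood of $\Delta(X)$ and $\langle \delta_{\Delta(X)}, g\rangle = \lim_{\epsilon\to 0}\langle \Phi_\epsilon, g\rangle$ for every $g\in C^\infty(X\times X)$, where $\delta_{\Delta(X)}$ is the distribution $g\mapsto \int_X g(x,x)\,dx$. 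By definition $\tr^\flat B = \langle K_B, \delta_{\Delta(X)}\rangle$ in the sense of the pullback $\iota^*K_B$ of \cite[Theorem~8.2.4]{ho1}.

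The key step is then to upgrade this weak (tested against $C^\infty$) convergence $\Phi_\epsilon \to \delta_{\Delta(X)}$ to convergence in a topology strong enough to pair with $K_B$. Here \eqref{e:no-diagonal} enters: $\WF'(B)\cap\Delta(T^*X)=\emptyset$ means precisely that $\WF(K_B)$ is disjoint from the conormal bundle $N^*\Delta(X)$, which is the wavefront set of $\delta_{\Delta(X)}$. One wants to know that $\Phi_\epsilon\to\delta_{\Delta(X)}$ not just weakly but in the space $\mathcal D'_\Lambda(X\times X)$ of distributions with wavefront set contained in a fixed closed cone $\Lambda$ disjoint from $\WF(K_B)$, equipped with its natural (sequential) topology; the pairing $\mathcal D'_\Lambda \times \mathcal D'_{\WF(K_B)} \to \mathbb C$ is then separately continuous by H\"ormander's theory, which gives $\langle K_B, \Phi_\epsilon\rangle \to \langle K_B, \delta_{\Delta(X)}\rangle = \tr^\flat B$. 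Concretely one takes $\Lambda$ to be a small conic neighbourhood of $N^*\Delta(X)$ still avoiding $\WF(K_B)$ — possible since the latter is closed and conic — and verifies the two ingredients: (a) $\WF(\Phi_\epsilon)\subset\Lambda$ uniformly in $\epsilon$, which follows because $\Phi_\epsilon$ is built from $E_\epsilon$ and the convergence $E_\epsilon\to I$ in $\Psi^{0+}$ controls the kernels microlocally near the diagonal; (b) the seminorms measuring decay of the localized Fourier transform of $\Phi_\epsilon - \delta_{\Delta(X)}$ in directions outside $\Lambda$ go to zero, which again reduces to the $\Psi^{0+}$-convergence in \eqref{eq:eeps1}.

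I expect step (b) — the convergence of $\Phi_\epsilon$ to $\delta_{\Delta(X)}$ in $\mathcal D'_\Lambda$, uniformly controlling the high-frequency tails — to be the main obstacle, since weak convergence alone is not enough and one must quantify how the mollification affects the wavefront set. The cleanest route is probably to work in local coordinates near the diagonal: write $K_B$ as an oscillatory-type object whose wavefront avoids a cone around $\{\eta = -\xi\}$, insert $\Phi_\epsilon$ explicitly as a rescaled bump, and split the integral into a piece where the phase is non-stationary (handled by non-stationary phase / integration by parts, giving $O(\epsilon^\infty)$ or at least a clean limit using only finitely many derivatives of the cutoff $\psi$) and a piece near the diagonal frequencies where $K_B$ is smooth by hypothesis (handled by dominated convergence). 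An alternative, and perhaps shorter, packaging is to cite directly the continuity of pullback under $C^\infty$ maps in the appropriate wavefront-controlled topology (\cite[Theorem~8.2.4]{ho1} and the surrounding discussion): identify $\tr E_\epsilon B E_\epsilon$ with $\int_X \iota^*(K_{E_\epsilon} * K_B * K_{E_\epsilon})$ and use that $K_{E_\epsilon} \to \delta$ in a wavefront-preserving way so that $\iota^*$ may be applied in the limit. Either way, the orientable-bundle and Anosov structure play no role here — this is a purely microlocal lemma — so the whole argument is self-contained given \eqref{e:no-diagonal}, \eqref{eq:eeps1}, and H\"ormander's pullback theorem.
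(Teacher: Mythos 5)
Your pairing reformulation --- writing $\tr E_\epsilon BE_\epsilon=\langle K_B,\Phi_\epsilon\rangle$ with $\Phi_\epsilon$ the smooth kernel you define, and then trying to pass to the limit via continuity of the H\"ormander pairing --- is a legitimate dual packaging of the paper's argument, and indeed your ``alternative, perhaps shorter packaging'' in the last paragraph is precisely what the paper does: show $K_{B_\epsilon}\to K_B$ in $\mathcal D'_\Gamma(X\times X)$ for a closed cone $\Gamma\supset\WF(K_B)$ disjoint from $N^*\Delta(X)$, then invoke~\cite[Theorem~8.2.4]{ho1}. But as written your proposal contains a genuine gap: you flag the $\mathcal D'_\Lambda$-convergence of $\Phi_\epsilon$ as ``the main obstacle'' and sketch two routes without carrying either out, and your ingredient~(a) is not quite well posed --- for each fixed $\epsilon>0$ the kernel $\Phi_\epsilon$ is smooth, so $\WF(\Phi_\epsilon)=\emptyset$ trivially. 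What you actually need is a \emph{uniform-in-$\epsilon$} bound on the $\mathcal D'_\Lambda$ seminorms of~\cite[Definition~8.2.2]{ho1}: for every $\phi\in C_{\rm c}^\infty$ and closed cone $V$ with $(\supp\phi\times V)\cap\Lambda=\emptyset$, $\sup_{\epsilon}\sup_{\xi\in V}\langle\xi\rangle^N|\widehat{\phi\,\Phi_\epsilon}(\xi)|<\infty$ for each $N$. Together with the trivial weak convergence $\Phi_\epsilon\to\delta_{\Delta(X)}$, that uniform bound is the entire analytic content of the lemma, and it is the step you have left open.

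The paper supplies the corresponding uniform estimate on the dual side by a concrete pseudodifferential-calculus observation which your sketch does not reach: testing $K_{B_\epsilon}=E_{\epsilon,x}^t E_{\epsilon,y}K_B$ against $A\in\Psi^0(X\times X)$ microlocalized near $N^*\Delta(X)$, it notes that although $E_{\epsilon,x}^t$ and $E_{\epsilon,y}$ are not separately pseudodifferential on $X\times X$, the composite $AE_{\epsilon,x}^tE_{\epsilon,y}$ \emph{is}, uniformly in $\Psi^{0+}(X\times X)$, because $A$ localizes to the region where the two fibre variables are comparable; since its wave front set is disjoint from $\WF(K_B)$, the output $AK_{B_\epsilon}$ is bounded in $C^\infty$. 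This in turn rests on the uniform estimate~\eqref{eq:eeps1}, proved in the appendix via Melrose's characterization of $\Psi^{0+}$ by iterated application of vector fields tangent to the diagonal. To complete your version you would need to produce the same uniform seminorm bounds for $\Phi_\epsilon$ (or adapt the non-stationary-phase computation you gesture at and actually estimate the tails of $\widehat{\phi\Phi_\epsilon}$), which is no easier than what the paper does and is currently missing.
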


If an operator $\mathbf B$ instead acts on sections of a smooth vector bundle,
$\mathbf B:C^\infty(X;\mathcal E)\to \mathcal D'(X;\mathcal E)$, and satisfies~\eqref{e:no-diagonal},
then we can define the trace of $\mathbf B$ by the formula
$$
\tr^\flat\mathbf B=\tr^\flat\sum_{j=1}^r B_{jj},\quad
\mathbf B(f\mathbf e_l)=\sum_{j=1}^r (B_{jl}f)\mathbf e_j,\
f\in C^\infty(X),
$$
if $\mathbf e_1,\dots,\mathbf e_r$ is a local frame of $\mathcal E$ and $\mathbf B$
is supported in the domain of the local frame~-- the general case is handled by a partition
of unity and the independence of the choice of the frame is easily verified.

\section{Properties of the resolvent}
\label{micro}

In this section we use the anisotropic Sobolev spaces $ H_{sG} $ and
the propagation results recalled in \S \ref{wfs} to describe the 
microlocal structure of the meromorphic continuation of the resolvent.
Our proof is different that the argument in~\cite{fa-sj} in the sense
that we use a less refined weight to define anisotropic Sobolev
spaces and derive the Fredholm property of $\mathbf P-\lambda$
from propagation of singularities.

Anisotropic Sobolev spaces appeared in the study
of Anosov flows in the works of Baladi \cite{bal},
Baladi--Tsujii
\cite{b-t},  Gou\"ezel--Liverani \cite{g-l}, Liverani \cite{liv2},
and other authors. However, the use of microlocally defined
exponential weights allows a more direct study using PDE methods.

\subsection{Anisotropic Sobolev spaces}
\label{on-forms}

Let $(X,\varphi_t)$ be as
in \S\ref{dyns} and consider the vector bundle, 
$\mathcal E$,
of differential forms of all orders on $X$. (The resolvents on forms
of different degree are decoupled from each other, however we treat
them as a single resolvent to simplify notation.)
Consider the first order differential operator
\begin{equation}
  \label{e:the-P}
\mathbf P:C^\infty(X;\mathcal E)\to C^\infty(X;\mathcal E),\quad
\mathbf P(\mathbf u)={1\over i}\mathcal L_V \mathbf u, \quad
\mathcal E: =\bigoplus_{j=0}^n \Lambda^j ( T^*X ) ,
\end{equation}
where $V$ is the generator of the flow $\varphi_t$,
$\mathcal L$ denotes the Lie derivative, and $\mathbf u$
is a differential form on $X$.

The principal symbol $\sigma(\mathbf P)=p\in S^1(X;\mathbb R)$, as defined in~\S\ref{a:wf-1}, is diagonal
and homogeneous of degree 1:
$
p(x,\xi)=\xi( V(x) ) $, 
$(x,\xi)\in T^*X$.
This follows immediately from the fact that for any basis $\mathbf e_1,\dots,\mathbf e_r$ of
$\mathcal E$, and all $u_1,\dots,u_r\in C^\infty(X)$,
$$
\mathcal L_V\sum_{j=1}^r u_j \mathbf e_j=\sum_{j=1}^r Vu_j \, \mathbf e_j
+\sum_{j=1}^r u_j \, \mathcal L_V\mathbf e_j,
$$
where the second term in the sum is a differential operator of order 0.

The Hamilton flow is 
$ e^{ t H_p } ( x , \xi ) = ( \varphi_t ( x ) , ( {}^T
d\varphi_t ( x) )^{-1} \xi ) $. Define the decomposition
$$T_x^*X=E_0^*(x)\oplus E_s^*(x)\oplus E_u^*(x),$$ where
$E_0^*(x),E_s^*(x),E_u^*(x)$ are dual
to $E_0(x),E_u(x),E_s(x)$.
From \eqref{e:anosov} it follows that
\begin{equation}
\label{eq:dualan}
\begin{gathered}
\xi\not\in E_0^*(x)\oplus E_s^*(x)\ \Longrightarrow\
d\big(\kappa(e^{tH_p}(x,\xi)),\kappa(E_u^*)\big)\to 0\text{ as }t\to
+\infty, \\
\xi\not\in E_0^*(x)\oplus E_u^*(x)\ \Longrightarrow\ d\big(\kappa(e^{tH_p}(x,\xi)),\kappa(E_s^*)\big)\to 0\text{ as }t\to -\infty.
\end{gathered}
\end{equation}
 Here $
\kappa : T^* X \setminus 0 \to S^* X $ is the projection defined before \eqref{eq:defL}.
Moreover, under the assumptions of~\eqref{eq:dualan} we have $|e^{tH_p}(x,\xi)|\geq C^{-1}e^{\theta|t|}|\xi|$,
and the convergence in~\eqref{eq:dualan} and the constant $C$ are locally uniform
in $(x,\xi)$.
In particular \eqref{eq:dualan} implies that, in the sense of definition
\eqref{eq:defL}, the closed conic sets $ E_s^* $ and $ E_u^* $ are a
radial source and a radial sink, respectively~-- see Figure~\ref{f:dynamics} below.

Anisotropic Sobolev spaces have a long tradition in microlocal analysis
going back to the work of Duistermaat \cite{duistermaat} and 
Unterberger \cite{unterberger}. 
To define a version on which  $\mathbf
P-\lambda$ is a Fredholm  operator, 
we use a function $ m_G\in C^\infty(T^*X\setminus 0;[-1,1]) $, homogeneous of degree $ 0 $ and such that
\begin{gather}
  \label{e:m-G}
\begin{gathered} 
m_G=1 \quad\text{near }E_s^*,\ \ \
m_G=-1 \quad\text{near }E_u^*,\\
H_p m_G\leq 0 \quad\text{everywhere}.
\end{gathered}
\end{gather}
A function with these properties, supported
in a small neighbourhood of $E_s^*\cup E_u^*$,
can be constructed using part~1 of Lemma~\ref{l:radialesc}. A more
refined version, not needed here, can be found in~\cite[Lemma~1.2]{fa-sj}.
With $ m_G $ in place we choose a pseudodifferential operator
$G\in\Psi^{0+}(X)$ satisfying
\begin{equation}
  \label{e:sigma-G}
\sigma(G)(x,\xi)=m_G(x,\xi)\log|\xi|,
\end{equation}
where $|\cdot|$ is any smooth norm on the fibers of $T^*X$. 
Then, using \cite[\S\S 8.3,9.3,14.2]{ev-zw} as in \cite[(3.9)]{ddz}, $ \exp(\pm s G)\in \Psi^{s+}(X)$ for any $
s > 0 $. 
The anisotropic Sobolev spaces are defined using this
exponential weight:
$$
H_{sG}:=\exp(-sG)(L^2(X) ),\quad
\|\mathbf u\|_{H_{sG}}:=\|\exp(sG)\mathbf u\|_{L^2}.
$$
Note that $H^s(X)\subset H_{sG}\subset H^{-s}(X)$.
This is because the symbol of $\exp(\pm s G)$ lies in the
class $S^s_{1-\varepsilon,\varepsilon}$ for each $\varepsilon>0$,
see~\cite[(18.1.1)${}''$]{ho3}, and thus maps $H^{k}(X)\to H^{k-s}(X)$ for
each $k$, see~\cite[Theorem~18.1.13]{ho3}.

Define the domain, $D_{sG}$, of $ \mathbf P $ as the set of $ \mathbf u \in H_{
  sG } $ such that the distribution $ \mathbf P u $ is in $ H_{ s G }
$. The Hilbert space norm on $ D_{ sG} $ is given by 
$
\|\mathbf u\|_{D_{sG}}^2:=\|\mathbf u\|_{H_{sG}}^2+\|\mathbf P \mathbf
u\|_{H_{sG}}^2$. 

\subsection{Ruelle--Pollicott resonances for forms}
\label{stuff}

Here we state the properties of the resolvent of $\mathbf P$:
\begin{prop}
\label{l:resolvent-meromorphic}
Fix a constant $C_0>0$. Then for $s>0$ large enough depending on $C_0$,
$\mathbf P-\lambda:D_{sG}\to H_{sG}$ is a Fredholm operator of index
0 in the region $\{\Im\lambda>-C_0\}$.
\end{prop}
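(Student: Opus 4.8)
The plan is to prove the Fredholm property by establishing two semiclassical estimates: a ``forward'' estimate controlling $\mathbf u$ in terms of $(\mathbf P-\lambda)\mathbf u$, and an analogous ``backward'' estimate for the adjoint $(\mathbf P-\lambda)^*$, both valid on the appropriate anisotropic spaces. The operator $\mathbf P-\lambda$ has semiclassical principal symbol $p-i\Im\lambda$ in the rescaled picture; more precisely, one conjugates by $\exp(sG)$ and works semiclassically, writing $\lambda=z/h$ so that $h(\mathbf P-\lambda)$ becomes a semiclassical operator of order $1$ with principal symbol $p - iq$ where $q = -s (H_p m_G)\langle\xi\rangle^{-1}\cdot(\text{something})\geq 0$ coming from the condition $H_p m_G\leq 0$ in~\eqref{e:m-G}, so that Propositions~\ref{l:hyperbolic}, \ref{l:radial1}, \ref{l:radial2} all apply with this sign convention. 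First I would decompose $\overline T^*X$ into: the elliptic region of $\mathbf P$ (away from the characteristic set $p=0$), a neighbourhood of the radial source $E_s^*$, a neighbourhood of the radial sink $E_u^*$, and the remaining part of the characteristic set, which flows (under $e^{tH_p}$) from the source to the sink by~\eqref{eq:dualan}.

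The estimate is then assembled microlocally piece by piece. On the elliptic set use the elliptic estimate \eqref{e:elliptic-est}. Near the radial source $E_s^*$, since on $H_{sG}$ the conjugated distribution sits (microlocally near $E_s^*$, where $m_G=1$) in a high-order space $H^{s}_h$, the a~priori regularity hypothesis $A\mathbf u\in H^{m_0}_h$ of Proposition~\ref{l:radial1} is satisfied once $s>m_0$, giving control near $E_s^*$ with no incoming information needed. Near the radial sink $E_u^*$, where $m_G=-1$ so the relevant space is the low-order $H^{-s}_h$, the threshold condition $m\leq -m_0$ in Proposition~\ref{l:radial2} holds for $s>m_0$, and that proposition gives control near $E_u^*$ in terms of $(\mathbf P-\lambda)\mathbf u$ plus a term $\|B\mathbf u\|$ with $\WFh(B)$ in the characteristic set but away from $E_u^*$. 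Finally, on the rest of the characteristic set propagation of singularities \eqref{e:hyperbolic-est} pushes control forward from a neighbourhood of the source to that $B$ (and to the sink neighbourhood), since every point there flows backward to the source. Combining all pieces yields
\[
\|\mathbf u\|_{H_{sG}} \le C\|(\mathbf P-\lambda)\mathbf u\|_{H_{sG}} + C\|B'\mathbf u\|_{H_{sG}}
\]
where $B'$ is a compactly microlocalized (hence, by Rellich, compact on the relevant spaces) operator; equivalently, using a fixed large frequency cutoff, $\|\mathbf u\|_{H_{sG}}\le C\|(\mathbf P-\lambda)\mathbf u\|_{H_{sG}}+C\|\mathbf u\|_{H^{-N}}$, which gives that $\mathbf P-\lambda$ has closed range and finite-dimensional kernel on $D_{sG}$.

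For the cokernel, I would run the same argument for the formal adjoint: $(\mathbf P-\lambda)^*$ acting on $H_{sG}^*=H_{-sG}$ has principal symbol $\bar p$ with the damping term now of the opposite sign, so the roles of source and sink are exchanged — but on $H_{-sG}$ the weight $m_G$ effectively flips sign too, so $E_u^*$ again plays the role where high regularity is available (source-type estimate) and $E_s^*$ the low-regularity sink-type estimate. One gets the analogous bound $\|\mathbf v\|_{H_{-sG}}\le C\|(\mathbf P-\lambda)^*\mathbf v\|_{H_{-sG}}+C\|\mathbf v\|_{H^{-N}}$, whence finite-dimensional cokernel. Index $0$ then follows because the family $\mathbf P-\lambda$ is a holomorphic family of Fredholm operators on the connected region $\{\Im\lambda>-C_0\}$ and for $\Im\lambda\gg1$ the operator is invertible (there $q>0$ is large and the estimate holds with no compact error, by a straightforward positive-commutator / energy argument using $\Im\langle(\mathbf P-\lambda)\mathbf u,\mathbf u\rangle$), so the index, being constant, equals $0$.

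The main obstacle is the uniformity of all these microlocal estimates in $\lambda$ over the strip $\{\Im\lambda>-C_0\}$ and, relatedly, pinning down how large $s$ must be: the threshold regularity $m_0$ in Propositions~\ref{l:radial1}--\ref{l:radial2} depends (through the proofs) on the expansion rates $\theta$ of the flow near $E_s^*, E_u^*$ \emph{and} on $C_0$, because the effective order shift produced by the damping term $q$ scales with $\Im\lambda$. One must check that choosing $s>s(C_0)$ makes the radial-point hypotheses hold uniformly for all $\lambda$ with $\Im\lambda>-C_0$, and that the $\mathcal O(h^\infty)$ and compact error terms can be absorbed uniformly. This is exactly the point where the microlocal approach replaces the delicate dynamical Lasota--Yorke estimates of the transfer-operator method, and getting the bookkeeping of orders right near the radial sets is the crux of the argument.
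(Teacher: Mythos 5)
Your overall strategy (elliptic estimate off the characteristic set, radial source estimate at $E_s^*$ using the high regularity of $H_{sG}$ there, radial sink estimate at $E_u^*$ using the low regularity, propagation in between, adjoint argument with roles swapped, index $0$ by connectedness and invertibility for $\Im\lambda\gg1$) is the same skeleton as the paper's. But you attempt to run the estimate directly on $\mathbf P-\lambda$, whereas the paper first modifies the operator by a \emph{complex absorbing potential} $-iQ_\delta$ supported near the zero section, proves invertibility of $\mathbf P_\delta(z)=h\mathbf P-iQ_\delta-z$ on $H_{sG(h)}$ (Proposition~\ref{l:parametrix-properties}), and then deduces Proposition~\ref{l:resolvent-meromorphic} in one line because $Q_\delta$ is smoothing, hence compact. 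The reason the paper needs $Q_\delta$ is exactly where your argument has a gap.

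The gap is in your sentence ``on the rest of the characteristic set propagation of singularities pushes control forward from a neighbourhood of the source \dots since every point there flows backward to the source.'' This is false for points of $E_u^*$ at bounded, nonzero $|\xi|$: by the expansion/contraction in~\eqref{eq:dualan}, backward flow on $E_u^*$ sends $|\xi|$ to $0$ exponentially, so such points converge to the \emph{zero section}, not to $E_s^*$ or $\kappa(E_s^*)$. Since $p$ vanishes on the zero section, $\mathbf P-\lambda$ is not (uniformly) elliptic there in the semiclassical scaling $z=h\lambda$ (as $h\to0$, $z\to0$), and Proposition~\ref{l:radial2} cannot be applied because its error term $\|B\mathbf u\|$ has $\WFh(B)\subset\Ell_h(B_1)\setminus\kappa(L)$, which includes precisely these interior $E_u^*$ points, and you have no prior control of them. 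In the paper's proof, Cases~4 and~5 handle this by using $Q_\delta$: propagate from $\{|\xi|<\delta/2\}$, where $Q_\delta$ makes $\mathbf P_\delta(z)$ elliptic.

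There is a way to salvage your plan without $Q_\delta$: run the argument \emph{non-semiclassically} (classical wave front sets and classical propagation/radial-point estimates, with $\lambda$ as a fixed parameter). Then all bounded-frequency regions, including a conic-truncated neighbourhood of $E_u^*$, are automatically swept into the compact error $\|\mathbf u\|_{H^{-N}}$, since classical $\WF$ lives only at fiber infinity and on $S^*X$ every characteristic point outside $\kappa(E_u^*)$ does flow backward to $\kappa(E_s^*)$. That would be a legitimate alternative to the paper's approach, but it is not what you wrote: you set up the problem semiclassically with $\lambda=z/h$, and in that framework you must either introduce $Q_\delta$ (as the paper does) or explain why the neighbourhood of the zero section contributes a compact error uniformly in $h$. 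As it stands, the propagation step does not close.

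Two smaller remarks: (i) your description of the damping term $q$ is not quite right -- after conjugating by $e^{sG(h)}$ the relevant quantity is $\Im\sigma_h(\mathbf P_{\delta,s}(z))$, which the paper computes as $-\sigma_h(Q_\delta)-\Im z+shH_p\sigma_h(G(h))$; the $H_pm_G\leq0$ hypothesis controls the sign, but your formula $q=-s(H_pm_G)\langle\xi\rangle^{-1}\cdot(\cdots)$ does not correspond to the actual symbol computation. (ii) Your concern about uniformity in $\lambda$ for the threshold $m_0$ is well-placed but is precisely handled in the paper by making $s$ large enough depending on $C_0$ and by the form of the radial estimates; it is not the real difficulty here -- the zero section is.
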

%
\begin{prop}
  \label{l:our-stuff-actually-makes-sense}
Let $s>0$ be fixed as in Proposition~\ref{l:resolvent-meromorphic}.
Then there exists a constant $C_1$ depending on $s$, such that
for $\Im\lambda>C_1$, the operator $\mathbf P-\lambda:D_{sG}\to H_{sG}$ is invertible
and
\begin{equation}
\label{e:our-stuff-actually-makes-sense}
(\mathbf P-\lambda)^{-1}=i\int_0^\infty e^{i\lambda t}\varphi_{-t}^*\,dt,
\end{equation}
where $\varphi_{-t}^*:C^\infty(X;\mathcal E)\to C^\infty(X;\mathcal E)$ is the pullback operator
by $\varphi_{-t}$ on differential forms and the integral on the right-hand side converges in
operator norm $H^s\to H^s$ and $H^{-s}\to H^{-s}$.
\end{prop}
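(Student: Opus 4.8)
The plan is to prove Proposition~\ref{l:our-stuff-actually-makes-sense} by first establishing the identity~\eqref{e:our-stuff-actually-makes-sense} on the level of a convergent integral acting on smooth forms, and then upgrading it to an operator identity between $D_{sG}$ and $H_{sG}$. First I would record the elementary fact that the pullback $\varphi_{-t}^*$ is, by the chain rule and the Anosov bounds~\eqref{e:anosov}, an operator whose $H^s\to H^s$ and $H^{-s}\to H^{-s}$ norms grow at most exponentially in $t$: concretely $\|\varphi_{-t}^*\|_{H^s\to H^s}\leq C e^{C't}$ for some $C,C'$ depending on $s$ and the flow, since $\varphi_{-t}$ is a diffeomorphism generated by the smooth vector field $V$ and its derivatives of all orders grow at most exponentially (this is the content of the bound~\eqref{e:flow-der-bound} referenced near Lemma~\ref{l:dyn-3}). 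Consequently the integral $i\int_0^\infty e^{i\lambda t}\varphi_{-t}^*\,dt$ converges absolutely in operator norm on $H^s$ and on $H^{-s}$ as soon as $\Im\lambda>C_1:=C'$, and defines a bounded operator $R(\lambda)$ on each of these spaces, holomorphic in $\lambda$ there.

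Next I would verify that $R(\lambda)$ inverts $\mathbf P-\lambda$. The key computation is that for $\mathbf u\in C^\infty(X;\mathcal E)$ the curve $t\mapsto \varphi_{-t}^*\mathbf u$ solves the transport equation $\partial_t(\varphi_{-t}^*\mathbf u)=-\mathcal L_V(\varphi_{-t}^*\mathbf u)=i\mathbf P(\varphi_{-t}^*\mathbf u)$, with $\varphi_0^*\mathbf u=\mathbf u$; this is just the definition of the Lie derivative. Integrating by parts in $t$ against $i e^{i\lambda t}$ and using that $e^{i\lambda t}\varphi_{-t}^*\mathbf u\to 0$ as $t\to+\infty$ in, say, $H^{-s}$ (again by the exponential bound and $\Im\lambda>C_1$), one gets $(\mathbf P-\lambda)R(\lambda)\mathbf u=\mathbf u$ and, since $\mathbf P$ commutes with $\varphi_{-t}^*$, also $R(\lambda)(\mathbf P-\lambda)\mathbf u=\mathbf u$ for $\mathbf u\in C^\infty$. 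One then needs to check that $R(\lambda)$ actually maps into $D_{sG}$: since $H^s\subset H_{sG}\subset H^{-s}$, $R(\lambda)$ maps $H_{sG}$ (hence $C^\infty$) continuously into $H^{-s}$, and the identity $(\mathbf P-\lambda)R(\lambda)=I$ shows $\mathbf P R(\lambda)\mathbf u\in H^{-s}$ as well; but to land in $D_{sG}$ one must improve $H^{-s}$ to $H_{sG}$. Here I would invoke Proposition~\ref{l:resolvent-meromorphic}: for $\Im\lambda>C_1\geq -C_0$ the operator $\mathbf P-\lambda:D_{sG}\to H_{sG}$ is Fredholm of index $0$, and it is injective because any $\mathbf u\in D_{sG}\subset H^{-s}$ with $(\mathbf P-\lambda)\mathbf u=0$ would, by the explicit solution of the transport equation and the exponential growth bound, have to vanish once $\Im\lambda$ is large enough (the would-be resolvent expansion forces $\mathbf u=\lim_{t\to\infty}e^{i\lambda t}\varphi_{-t}^*\mathbf u=0$ in $H^{-s}$); hence it is invertible, and its inverse $(\mathbf P-\lambda)^{-1}:H_{sG}\to D_{sG}$ agrees with $R(\lambda)$ on the dense subspace $C^\infty(X;\mathcal E)$, so $R(\lambda)=(\mathbf P-\lambda)^{-1}$ as operators, and in particular $R(\lambda)$ does map into $D_{sG}$.

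Finally I would note that the claimed convergence of the integral in the operator norms $H^s\to H^s$ and $H^{-s}\to H^{-s}$ is exactly the absolute convergence established in the first step, so nothing further is needed there. The main obstacle I anticipate is the justification of the integration by parts at $t=+\infty$ and of the claim that $(\mathbf P-\lambda)^{-1}$ is genuinely invertible (not merely Fredholm) for $\Im\lambda$ large: both rest on controlling $e^{i\lambda t}\varphi_{-t}^*$ as $t\to+\infty$, which requires the polynomial-in-$t$, exponential-type bounds on the derivatives of $\varphi_{-t}$ on a compact manifold; once those growth estimates are in hand the rest is bookkeeping with the inclusions $H^s\subset H_{sG}\subset H^{-s}$ and a density argument. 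One should also take care that $C_1$ can be chosen independently of $\lambda$ but depending on $s$, which is automatic since the growth rate of $\|\varphi_{-t}^*\|_{H^{\pm s}\to H^{\pm s}}$ depends only on $s$ and the fixed flow.
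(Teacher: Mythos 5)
Your proposal is correct and follows essentially the same route as the paper: the exponential bound $\varphi_{-t}^*=\mathcal O(e^{C_1 t})_{H^{\pm s}\to H^{\pm s}}$, the transport equation for $\varphi_{-t}^*\mathbf u$, and Fredholm index $0$ plus injectivity to pass from the integral formula to invertibility. The only differences are minor: the paper integrates $\partial_t(e^{i\lambda t}\varphi_{-t}^*\mathbf u)$ directly for $\mathbf u\in D_{sG}$, which yields injectivity and formula~\eqref{e:our-stuff-actually-makes-sense} in one stroke without the $C^\infty$-then-density detour, and you have a sign slip in the transport equation (it should be $\partial_t(\varphi_{-t}^*\mathbf u)=-\mathcal L_V\varphi_{-t}^*\mathbf u=-i\mathbf P\varphi_{-t}^*\mathbf u$, since $\mathbf P=\tfrac1i\mathcal L_V$), which does not affect the conclusion once the integration by parts is carried out with the correct sign.
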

The Fredholm property and the invertibility of $\mathbf P-\lambda$ for
large $\Im\lambda$
show that the resolvent $\mathbf R(\lambda)=(\mathbf P-\lambda)^{-1}:H_{sG}\to H_{sG}$
is a meromorphic family of operators with poles of finite rank~-- see for example~\cite[Proposition~D.4]{ev-zw}.
Note that Ruelle--Pollicott resonances, the poles of $\mathbf R(\lambda)$ in the region
$\Im\lambda>-C_0$, are then the poles of the meromorphic continuation of the Schwartz kernel
of the operator given by the right-hand side of~\eqref{e:our-stuff-actually-makes-sense},
and thus are independent of the choice of $s$ and the weight $G$.
Microlocal structure of $\mathbf R(\lambda)$ is described in
\begin{prop}
  \label{l:resolvent-properties}
Let $ C_0 $ and $ s $ be as in
Proposition~\ref{l:resolvent-meromorphic} and assume
$\Im\lambda_0>-C_0$. Then for $\lambda$ near $\lambda_0$,
\begin{equation}
\label{eq:Laurent}
\mathbf R(\lambda)=\mathbf R_H(\lambda) - \sum_{j=1}^{J(\lambda_0)} { ( \mathbf P -
  \lambda_0)^{j-1} \Pi \over (\lambda-\lambda_0)^j}
\end{equation}
where $\mathbf R_H(\lambda)$
holomorphic near $\lambda_0$, $ \Pi : H_{ sG } \to H_{sG} $ is the commuting
projection onto the kernel of $ ( \mathbf P -\lambda_0)^{J(\lambda_0)} $, and 
\begin{equation}
  \label{e:houston}
\WF'(\mathbf R_H(\lambda))\subset \Delta(T^*X)\cup \Omega_+\cup (E_u^*\times E_s^*),\quad
\WF'(\Pi )\subset E_u^*\times E_s^*,
\end{equation}
where $\Delta(T^*X)$ is the diagonal and $\Omega_+$ is the positive flow-out of $e^{tH_p}$ on $\{p=0\}$:
$$
\Omega_+=\{(e^{tH_p}(x,\xi),x,\xi)\mid t\geq 0,\ p(x,\xi)=0\}.
$$
\end{prop}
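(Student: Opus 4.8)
\textbf{Proof strategy for Proposition~\ref{l:resolvent-properties}.}
The plan is to combine the Fredholm theory already established in Propositions~\ref{l:resolvent-meromorphic} and~\ref{l:our-stuff-actually-makes-sense} with the microlocal estimates of \S\ref{wfs} applied to $\mathbf P$. First I would record the structural facts: by Proposition~\ref{l:resolvent-meromorphic} the family $\mathbf R(\lambda)=(\mathbf P-\lambda)^{-1}$ is meromorphic with finite-rank poles on $\{\Im\lambda>-C_0\}$; near a pole $\lambda_0$ with multiplicity $J(\lambda_0)$ the analytic Fredholm theory (as in~\cite[Proposition~D.4]{ev-zw}) yields the Laurent expansion~\eqref{eq:Laurent}, where $\Pi$ is the spectral projector onto $\ker(\mathbf P-\lambda_0)^{J(\lambda_0)}$, commuting with $\mathbf P$, and $\mathbf R_H(\lambda)$ is holomorphic near $\lambda_0$. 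It therefore suffices to prove the two wave front statements in~\eqref{e:houston}; for $\Im\lambda$ large these also give, via~\eqref{e:wf-wf-h} and~\eqref{e:our-stuff-actually-makes-sense}, the full resolvent since $\varphi_{-t}^*$ is a Fourier integral operator whose canonical relation is the graph of $e^{tH_p}$.

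The heart of the argument is the wave front bound $\WF'(\mathbf R(\lambda))\subset\Delta(T^*X)\cup\Omega_+\cup(E_u^*\times E_s^*)$ for non-pole $\lambda$. I would prove this by applying Lemma~\ref{l:wfs}: fixing $(x,\xi)\in T^*X$ and testing $\mathbf R(\lambda)$ against an $h$-tempered family $\mathbf f(h)$ with $\WFh(\mathbf f)$ in a small neighbourhood $U$ of $(x,\xi)$, and setting $\mathbf u=\mathbf R(\lambda)\mathbf f$ so that $(\mathbf P-\lambda)\mathbf u=\mathbf f$ with $\WFh(\mathbf f)\subset U$ and $\mathbf u=\mathcal O(1)_{H_{sG}}$ microlocally (hence $\mathbf u\in H^{-s}_h$ near the sink $E_u^*$ and $\mathbf u\in H^s_h$ near the source $E_s^*$, for $s$ large). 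The semiclassical parameter enters by rescaling $\lambda=z/h$ or simply by noting that the rescaled operator $h\mathbf P$ has semiclassical principal symbol $p(x,\xi)$ independent of $h$, with the subprincipal $q\equiv 0$ — so Propositions~\ref{l:elliptic}, \ref{l:hyperbolic}, \ref{l:radial1}, \ref{l:radial2} all apply. The elliptic estimate~\eqref{e:elliptic-wf} handles the region $\{p\neq 0\}$: there $\WFh(\mathbf u)\cap\Ell_h(\mathbf P-\lambda)\subset\WFh(\mathbf f)$, so singularities lie in $\{p=0\}$ and project into $\WFh(\mathbf f)\subset U$. On $\{p=0\}$ the propagation estimate~\eqref{e:hyperbolic-wf} propagates singularities forward along $H_p$; combined with the source estimate at $E_s^*$ (Proposition~\ref{l:radial1}, which holds since $\mathbf u$ has high regularity $H^s_h$ there) and the sink estimate at $E_u^*$ (Proposition~\ref{l:radial2}, which holds since $\mathbf u$ has low regularity $H^{-s}_h$ there), one traces every point of $\WFh(\mathbf u)$ backward: either it reaches $\WFh(\mathbf f)\subset U$ after finite time — contributing to $\Omega_+$ — or it is trapped, i.e. its backward $H_p$-orbit stays away from $\WFh(\mathbf f)$ forever, forcing it into the closure of stable/unstable directions, hence into $E_u^*$; pairing with the requirement that $(x,\xi)\in\WFh(\mathbf f)$ be microlocally where $\mathbf f$ lives shows such a point pairs only with $E_s^*$, giving the $E_u^*\times E_s^*$ piece. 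The diagonal $\Delta(T^*X)$ appears because $\mathbf R(\lambda)$ differs from a pseudodifferential parametrix near the diagonal by a smoothing error plus the flow-out. For $\Pi=-\frac{1}{2\pi i}\oint\mathbf R(\lambda)\,d\lambda$, the residue picks up only the trapped part: since resonant states $\mathbf u\in\ker(\mathbf P-\lambda_0)^{J}$ satisfy $(\mathbf P-\lambda_0)^J\mathbf u=0$ everywhere, the elliptic and propagation estimates force $\WFh(\mathbf u)\subset E_u^*$, and dually the range of $\Pi^*$ (resonant states for the adjoint, governed by the sink becoming a source) lies in $E_s^*$; hence $\WF'(\Pi)\subset E_u^*\times E_s^*$.

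The main obstacle, as flagged in Remark~(ii) of the introduction, is the \emph{uniformity in time} underlying the radial estimates: one needs the source/sink estimates at $E_s^*$ and $E_u^*$ to hold on a full conic neighbourhood with a fixed threshold $m_0$, and crucially one needs to know that $\mathbf u=\mathbf R(\lambda)\mathbf f$ genuinely lies in $H^s_h$ near $E_s^*$ and $H^{-s}_h$ near $E_u^*$ — this is exactly what the choice of the anisotropic space $H_{sG}$ with the escape function $m_G$ of~\eqref{e:m-G} buys us, and verifying that the weight $G$ conjugates $\mathbf P$ to an operator still satisfying the hypotheses of Propositions~\ref{l:radial1}–\ref{l:radial2} (in particular that the sign conditions $H_pm_G\leq0$ place $E_s^*$ on the high-regularity side and $E_u^*$ on the low side) is the delicate bookkeeping step. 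A secondary subtlety is that the propagation argument must be run with the semiclassical wave front set $\WFh$ and then converted back to $\WF$ via~\eqref{e:wf-wf-h}, which is harmless here because all the estimates are microlocal and $\lambda$-uniform on compact subsets avoiding poles; the trapped-set analysis closing up the argument uses the dual Anosov decomposition~\eqref{eq:dualan} to identify exactly which conormal directions survive.
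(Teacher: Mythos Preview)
Your Laurent--expansion paragraph and your treatment of $\WF'(\Pi)$ via resonant and co-resonant states are both fine; in fact the latter is a clean alternative to the paper's argument, which instead deduces $\WF'(\Pi)\subset E_u^*\times E_s^*$ from the operator identity $\Pi=-\mathbf R_\delta(z_0)Q_\delta\Pi Q_\delta\mathbf R_\delta(z_0)$.

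There is, however, a genuine gap in your wave front bound for $\mathbf R(\lambda)$ itself. Your scheme is: given $\mathbf u=\mathbf R(\lambda)\mathbf f$ with $\WFh(\mathbf f)$ near $(x,\xi)$, trace each candidate point $(y,\eta)$ of $\WFh(\mathbf u)$ backward along $H_p$ until you reach either $\WFh(\mathbf f)$ (giving $\Omega_+$) or the source $\kappa(E_s^*)$ (where Proposition~\ref{l:radial1} and the $H^s$-regularity of $H_{sG}$ kill it). This works when $\eta\notin E_u^*$. But when $\eta\in E_u^*\setminus 0$, the backward $H_p$-orbit does not escape to $\kappa(E_s^*)$; it converges to the \emph{zero section}, where you have no estimate whatsoever: the operator is not elliptic there, and the zero section is not a radial set in the sense of~\eqref{eq:defL}. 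Your sentence ``pairing with the requirement that $(x,\xi)\in\WFh(\mathbf f)$\dots shows such a point pairs only with $E_s^*$'' is precisely the statement to be proved, and your one-sided propagation argument does not reach it.

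The paper closes this gap by introducing a complex absorbing potential $-iQ_\delta$ microlocalized in $\{|\xi|<\delta\}$, so that $h\mathbf P-iQ_\delta-z$ becomes elliptic at the zero section. Proposition~\ref{l:parametrix-properties} then gives an honest inverse $\mathbf R_\delta(z)$ with $\WFh'(\mathbf R_\delta)\cap T^*(X\times X)\subset\Delta(T^*X)\cup\Omega_+$ (no $E_u^*\times E_s^*$ term, because Case~4 of that proof controls points on $E_u^*$ by propagating back into the elliptic region $\{|\xi|<\delta/2\}$). The true resolvent is then recovered via the identity
\[
\mathbf R(\lambda)=h\big(\mathbf R_\delta(z)-i\mathbf R_\delta(z)Q_\delta\mathbf R_\delta(z)\big)
-\mathbf R_\delta(z)Q_\delta\mathbf R(\lambda)Q_\delta\mathbf R_\delta(z),
\]
and the last term contributes a wave front set contained in $\Upsilon_\delta=\{(\rho',\rho):e^{tH_p}(\rho),\,e^{-sH_p}(\rho')\in\WFh(Q_\delta)\text{ for some }t,s\ge0\}$; letting $\delta\to 0$ collapses this to $E_u^*\times E_s^*$. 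This absorbing-potential trick is the missing idea in your outline. (One could alternatively repair your approach by running the same propagation argument for the adjoint $\mathbf R(\lambda)^*$ on $H_{-sG}$, where $\kappa(E_u^*)$ plays the role of the high-regularity source, but you do not indicate this and the paper does not take that route.)
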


In~\S\ref{s:micro-1}, we construct a semiclassical nontrapping parametrix and study
its $h$-wave front set. In~\S\ref{s:micro-2}, we express $\mathbf R(\lambda)$
via the parametrix and use the results of~\S\ref{s:micro-1} to finish the proofs
of Propositions~\ref{l:resolvent-meromorphic}--\ref{l:resolvent-properties}.

\subsection{Complex absorbing potential near the zero section}
\label{s:micro-1}
We will modify $ \mathbf P - \lambda $ by a complex absorbing potential which
will eliminate trapping and guarantee invertibility of the modified operator.

It is convenient now to introduce a semiclassical parameter $h$ and use the algebra
$\Psi_h$ of semiclassical pseudodifferential operators, see~\S\ref{a:wf-2}.
If $\mathbf P$ is defined in~\eqref{e:the-P}, then
$h\mathbf P\in\Psi^1_h(X;\Hom(\mathcal E 
))$ is a semiclassical differential operator
with principal symbol $p=\sigma_h(h\mathbf P)$.

The original operator $\mathbf P$ is independent of $h$. However, the parameter $h$
enters in the parametrix $\mathbf R_\delta(z)$ defined in Proposition~\ref{l:parametrix-properties} below,
which is a convenient tool to show the Fredholm property of $\mathbf P-\lambda$. Moreover,
the semiclassical wavefront set of $\mathbf R_\delta(z)$ can be computed by studying the dependence
of $\WFh(\mathbf R_\delta(z)\mathbf f)$ on $\WFh(\mathbf f)$; this is not possible
for nonsemiclassical wavefront sets as we lose information on how the lengths
of covectors in $\WF(\mathbf f)$ and $\WF((\mathbf P-\lambda)^{-1}\mathbf f)$ are related.
Therefore, semiclassical methods are convenient for the proof of Proposition~\ref{l:resolvent-properties},
which is the key component of the present paper.

We need a semiclassical adaptation, $G(h)\in \Psi^{0+}_h(X)$, of the operator $G$,
such that
\begin{equation}
\label{eq:DefGh}
\sigma_h(G(h))(x,\xi)=(1-\chi(x,\xi))m_G(x,\xi)\log|\xi|,
\end{equation}
where $\chi\in C_0^\infty(T^*X)$ is equal to 1 near the zero section,
and $\WFh(G(h))$ does not intersect the zero section.
Note that, since $H_p\log|\xi|$ is homogeneous of degree zero,
\begin{equation}
  \label{eq:posGh}
H_p\sigma_h(G(h))(x,\xi)=(H_pm_G(x,\xi))\log|\xi|+\mathcal O(1)_{S^0_h}.
\end{equation}
Define the space
$H_{sG(h)}=\exp(-sG(h))(L^2(X))$. For each fixed $h>0$, the
operator $G(h)$ lies in $\Psi^{0+}(X)$ and
$\sigma(G(h))(x,\xi)=\sigma_h(G(h))(x,h\xi)$; therefore,
$\sigma(G(h)-G)$ is bounded as $|\xi|\to\infty$. By~\cite[Theorem~8.8]{ev-zw},
$H_{sG(h)}=H_{sG}$ and
the norms are equivalent, with the constant depending on $h$.
We also use the semiclassical analogue of the space $D_{sG}$, with
the norm
$$
\|\mathbf u\|_{D_{sG(h)}}^2:=\|\mathbf u\|_{H_{sG(h)}}^2+\|h\mathbf P \mathbf u\|_{H_{sG(h)}}^2.
$$

We modify $h\mathbf P$ by adding an $h$-pseudodifferential \emph{complex
absorbing potential} $-iQ_\delta\in\Psi^0_h(X)$, which provides a localization
to a neighbourhood of the zero section:
$$
\WFh(Q_\delta)\subset \{|\xi|<\delta\},\quad
\sigma_h(Q_\delta)>0\text{ on }\{|\xi|\leq\delta/2\},\quad
\sigma_h(Q_\delta)\geq 0\text{ everywhere},
$$
here $|\cdot|$ is a fixed norm on the fibers of $T^*X$.
The action of 
\[ \mathbf P_{\delta }( z)  := h\mathbf P-iQ_\delta-z \]
 on $H_{sG}$ is equivalent
to the action on $L^2$ of the conjugated operator
\[\begin{split}
\mathbf P_{\delta,s}(z)&:=e^{sG(h)} \mathbf P_\delta ( z ) 
e^{-sG(h)} 
=
\mathbf P_\delta ( z ) +s[G(h),h\mathbf P]+\mathcal O(h^2)_{\Psi^{-1+}_h},
\end{split}\]
where the asymptotic expansion follows from \cite[\S\S
8.3,9.3,14.2]{ev-zw} -- see~\cite[(3.11)]{ddz}.
We note that  $[G(h),Q_\delta]=\mathcal O(h^\infty)_{\Psi^{-\infty}}$
for small enough $\delta$,
because $\WFh(G(h))$ does not intersect the zero section.

\begin{figure}
\includegraphics{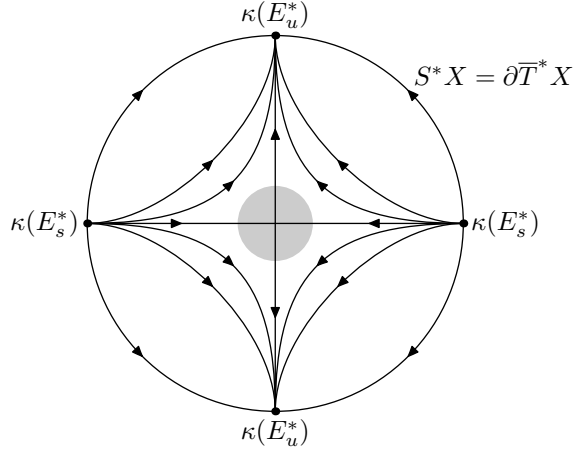}
\caption{Dynamics of the flow $e^{tH_p}$ on $\{p=0\}=\overline{E_s^*\oplus E_u^*}\subset \overline T^*X$,
projected onto the fibers of $\overline{T^*X}$. The shaded region is
the wave front set of $Q_\delta$.}
\label{f:dynamics}
\end{figure}

We now use the propagation of semiclassical singularities and 
the elimination of trapping due to the complex absorbing potential
to establish existence and properties of the inverse of $ \mathbf P_\delta ( z ) $. The relation between propagation and solvability
has a long tradition -- see \cite[\S26.1]{ho3}
Although the 
details below may look complicated the idea is simple and natural,
given the dynamics of the flow pictured on Figure~\ref{f:dynamics}:
given bounds on $\|\mathbf P_\delta(z)\mathbf u\|_{H_{sG(h)}}$, we first establish
bounds on $\mathbf u$ microlocally near the sources $\kappa(E_s^*)$
by Proposition~\ref{l:radial1}. By ellipticity (Proposition~\ref{l:elliptic})
we can also estimate $\mathbf u$ on $\{p\neq 0\}$ and in $\{|\xi|<\delta/2\}$,
where the latter is made possible by the potential $Q_\delta$. The resulting
estimates can be propagated forward
along the flow $e^{tH_p}$, using Proposition~\ref{l:hyperbolic}, to the whole
$\overline T^*X\setminus \kappa(E_u^*)$; finally, to bound $\mathbf u$
microlocally near $\kappa(E_u^*)$, we use Proposition~\ref{l:radial2}.
The spaces $ H_{sG(h)} $ provide
the correct regularity for Propositions~\ref{l:radial1}
and~\ref{l:radial2}.

\begin{prop}
  \label{l:parametrix-properties}
Fix a constant $C_0>0$ and $ \varepsilon > 0 $. Then for $s>0$ large
enough depending on $C_0$
and $h$ small enough, the operator
$$
\mathbf P_\delta ( z ) :D_{sG(h)}\to H_{sG(h)},\quad
-C_0h\leq \Im z\leq 1,\quad
|\Re z|\leq h^{\varepsilon},
$$
is invertible, and
the inverse, $\mathbf R_\delta(z)$, satisfies
$$
\|\mathbf R_\delta(z)\|_{H_{sG(h)}\to H_{sG(h)}}\leq Ch^{-1},\quad
\WFh'(\mathbf R_\delta(z))\cap T^*(X\times X)\subset \Delta(T^*X)\cup\Omega_+,
$$
with $\Delta(T^*X),\Omega_+$ defined in Proposition~\ref{l:resolvent-properties},
and 
$\WFh'(
\bullet)\subset\overline T^*(X\times X)$
is defined in~\S\ref{a:wf-2}.
\end{prop}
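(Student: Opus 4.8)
The plan is to construct $\mathbf R_\delta(z)$ by combining the microlocal estimates of \S\ref{wfs}, following the strategy already outlined after Figure~\ref{f:dynamics}. First I would establish the key a priori estimate: for $\mathbf u\in D_{sG(h)}$ with $\mathbf P_\delta(z)\mathbf u=\mathbf f$, we have $\|\mathbf u\|_{H_{sG(h)}}\leq Ch^{-1}\|\mathbf f\|_{H_{sG(h)}}+\mathcal O(h^\infty)$, uniformly for $z$ in the specified range. To do this I pass to the conjugated operator $\mathbf P_{\delta,s}(z)$ acting on $L^2$; its semiclassical principal symbol is $p-iq_\delta-z+is\, H_p\sigma_h(G(h))+\mathcal O(h)$, and by~\eqref{eq:posGh} together with~\eqref{e:m-G} the term $sH_p m_G\log|\xi|$ is $\leq 0$, which is exactly the sign needed to place $\mathbf P_{\delta,s}(z)$ in the framework of Propositions~\ref{l:hyperbolic}, \ref{l:radial1}, \ref{l:radial2} (with $q\geq 0$), at least away from the zero section where $G(h)$ is supported. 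The argument then runs in four pieces: (1) near the radial source $\kappa(E_s^*)$, the weight $m_G=+1$ makes the order $s$ large enough to satisfy the threshold $m_0$ of Proposition~\ref{l:radial1}, giving control of $\mathbf u$ there; (2) on the elliptic set $\{p\neq 0\}\cup\{|\xi|<\delta/2\}$ — the latter using $\sigma_h(Q_\delta)>0$ — the elliptic estimate Proposition~\ref{l:elliptic} controls $\mathbf u$; (3) propagation of singularities Proposition~\ref{l:hyperbolic} pushes these bounds forward along $e^{tH_p}$ over all of $\overline T^*X\setminus\kappa(E_u^*)$, using that every point of $\{p=0\}$ flows either into $\{|\xi|<\delta/2\}$ (near the zero section) or toward $\kappa(E_u^*)$; (4) near the radial sink $\kappa(E_u^*)$, the weight $m_G=-1$ makes the order $\leq -m_0$ so Proposition~\ref{l:radial2} applies, with its control term $B$ having wave front set already handled in step (3). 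Summing the pieces with a partition of unity on $\overline T^*X$ yields the estimate with the $h^{-1}$ loss coming from the $h^{-1}\|B_1\mathbf P_\delta\mathbf u\|$ terms.

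Next, by the same argument applied to the formal adjoint $\mathbf P_\delta(z)^*$ — which has the opposite sign of the absorbing potential and of $H_p$, hence sources and sinks interchanged, and for which one uses the dual weight $-sG(h)$, i.e. the space $H_{-sG(h)}$ — I obtain the analogous estimate $\|\mathbf v\|_{H_{-sG(h)}}\leq Ch^{-1}\|\mathbf P_\delta(z)^*\mathbf v\|_{H_{-sG(h)}}+\mathcal O(h^\infty)$. The two estimates together give injectivity of $\mathbf P_\delta(z)$ on $D_{sG(h)}$ and injectivity of its adjoint, hence — since $\mathbf P_\delta(z)-(\mathbf P_\delta(z_0))$ is a bounded perturbation and the operator is Fredholm of index $0$ by the standard elliptic-plus-propagation parametrix construction — surjectivity and invertibility, with $\|\mathbf R_\delta(z)\|_{H_{sG(h)}\to H_{sG(h)}}\leq Ch^{-1}$. (The Fredholm index-$0$ statement can be extracted from the estimates themselves: the $\mathcal O(h^\infty)$ remainder is the image of a compact — indeed smoothing — operator, and for $\Im z$ large the operator is invertible by a Neumann series, so the index is $0$ throughout the connected parameter region.)

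For the wave front set statement, I would use the characterization in Lemma~\ref{l:wfs}: to show $(y,\eta,x,\xi)\notin\WFh'(\mathbf R_\delta(z))$ for $(y,\eta,x,\xi)\notin\Delta(T^*X)\cup\Omega_+$, take $\mathbf f$ with $\WFh(\mathbf f)$ in a small neighbourhood $U$ of $(x,\xi)$ and show $\WFh(\mathbf R_\delta(z)\mathbf f)$ avoids a neighbourhood of $(y,\eta)$. Write $\mathbf u=\mathbf R_\delta(z)\mathbf f$, so $\mathbf P_\delta(z)\mathbf u=\mathbf f$ with $\mathbf u$ a priori in $H_{sG(h)}$ uniformly (by the norm bound just proved), hence $\mathbf u$ has the $H_{sG(h)}$-regularity required at the source. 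Then: by elliptic regularity $\WFh(\mathbf u)\subset\{p=0\}\cup\WFh(\mathbf f)$; by Proposition~\ref{l:radial1} (applicable since $\mathbf u$ meets the threshold at $\kappa(E_s^*)$ and $\WFh(\mathbf f)$ is taken disjoint from $\kappa(E_s^*)$ unless $(x,\xi)\in E_s^*$, in which case the flow-out is in $\Omega_+$) we get $\WFh(\mathbf u)\cap\kappa(E_s^*)$ controlled; and by propagation of singularities Proposition~\ref{l:hyperbolic}, any point of $\WFh(\mathbf u)$ on $\{p=0\}$ is either in $\WFh(\mathbf f)$'s forward flow-out $\Omega_+$, or flows backward into $\WFh(\mathbf f)$ or into $\kappa(E_s^*)$ — but the source is not in $\WFh(\mathbf u)$ unless fed by $\mathbf f$. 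The conormal-diagonal part $\Delta(T^*X)$ is forced by the structure of any parametrix for a differential operator (pseudolocality), and the zero-section piece $\{|\xi|<\delta\}$ lies inside $\Delta(T^*X)$ after intersecting with the relevant set. The upshot is $\WFh'(\mathbf R_\delta(z))\cap T^*(X\times X)\subset\Delta(T^*X)\cup\Omega_+$.

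The main obstacle is step~(1)/the radial-source analysis interfacing with the threshold condition: one must verify that the fixed choice of $s$ — large depending on $C_0$ — simultaneously clears the threshold $m_0$ at the source (where the effective order is $+s$ relative to $L^2$, after conjugation) and at the sink (where it is $-s$), and that this is compatible over the whole range $-C_0h\leq\Im z\leq 1$. Concretely, the damping $q=-s\,H_p m_G\log|\xi|+\sigma_h(Q_\delta)$ combined with $-\Im z\leq C_0 h$ must dominate in the right places; tracking these signs and the dependence of $m_0$ on the linearized dynamics (the rates $\theta$ in~\eqref{e:anosov}) near $E_s^*$ and $E_u^*$ is the delicate point, together with ensuring the operators $Q_\delta$ and $G(h)$ have disjoint-enough wave front behavior that $[G(h),Q_\delta]$ is negligible, as already noted before the statement. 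Everything else — the partition of unity, the adjoint argument, the Fredholm bookkeeping, and the extraction of $\WF'$ via Lemma~\ref{l:wfs} — is routine once the a priori estimate is in hand.
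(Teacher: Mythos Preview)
Your proposal is correct and follows essentially the same approach as the paper: the a~priori estimate via a microlocal partition into elliptic region, radial source, propagation, and radial sink (the paper splits this into five cases rather than your four, separating the propagation step according to whether the backward trajectory hits $\kappa(E_s^*)$ or the zero section), the adjoint estimate on $H_{-sG(h)}$, and the wave front set via Lemma~\ref{l:wfs}. One small simplification in the paper: surjectivity is obtained directly from closed range (which follows from the forward estimate) plus triviality of the cokernel (which is the kernel of the adjoint, handled by the adjoint estimate), so the Fredholm index-$0$ discussion you include is unnecessary.
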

\begin{proof}
We first prove the bound
\begin{equation}
  \label{e:parametrix-bound-1}
\|\mathbf u\|_{H_{sG(h)}}\leq Ch^{-1}\|\mathbf f\|_{H_{sG(h)}},\quad
\mathbf u\in D_{sG(h)},\quad
\mathbf f=\mathbf P_\delta (z)\mathbf u.
\end{equation}
Without loss of generality, we assume that $\|\mathbf u\|_{H_{sG(h)}}\leq 1$.
By a microlocal partition of unity, it suffices to obtain bounds on $A\mathbf u$, where
$A\in\Psi^0_h(X)$ falls into one of the following five cases:

\noindent\textbf{Case 1}: $\WFh(A)\cap \{p=0\}\cap \{|\xi|\geq \delta/2\}=\emptyset$. Then
$\mathbf P_{\delta,s}(z)$ is elliptic on $\WFh(A)$. We have
$\|A\mathbf u\|_{H_{sG(h)}}=\|A^se^{sG(h)}\mathbf u\|_{L^2}$,
where $A^s=e^{sG(h)}Ae^{-sG(h)}\in\Psi^0_h$ and $\WFh(A^s)\subset\WFh(A)$.
By Proposition~\ref{l:elliptic},
$$\|A^se^{sG(h)}\mathbf u\|_{L^2}\leq C\|B_1^s\mathbf P_{\delta,s}(z) e^{sG(h)}\mathbf u\|_{L^2}+\mathcal O(h^\infty),$$
where $B_1^s\in\Psi^0_h(X)$ is microlocalized in a neighbourhood of $\WFh(A)$.
Putting $B_1:=e^{-sG(h)}B_1^s e^{sG(h)}$, we obtain
\begin{equation}
\label{e:case-1}
\|A\mathbf u\|_{H_{sG(h)}}\leq C\|B_1\mathbf f\|_{H_{sG(h)}}+\mathcal O(h^\infty).
\end{equation}

\noindent\textbf{Case 2}: $\WFh(A)$ is contained in a small neighbourhood
of $\kappa(E_s^*)$, where $\kappa:T^*X\setminus 0\to S^*X=\partial\overline T^*X$
is the natural projection. By~\cite[Theorem~8.6]{ev-zw}, $\exp(sG(h))\in\Psi^s_h(X)$
and $\sigma_h(\exp(sG(h)))=\exp(s\sigma_h(G(h)))=|\xi|^s$ near
$\kappa(E_s^*)$. Therefore, $H_{sG(h)}$ is microlocally equivalent to the space
$H_h^s(X;\mathcal E)$ near $\kappa(E_s^*)$
in the sense that
\begin{equation}
\label{eq:snorm}
\|B\mathbf v\|_{H_h^s}\leq C\|\mathbf v\|_{H_{sG(h)}}+\mathcal O(h^\infty),\quad
\|B\mathbf v\|_{H_{sG(h)}}\leq C\|\mathbf v\|_{H_h^s}+\mathcal O(h^\infty),
\end{equation}
for each $B\in\Psi^0_h(X)$ with $\WFh(B)$ contained in a neighbourhood
of $\kappa(E_s^*)$ and each $h$-tempered $\mathbf v$.

Since $\Im z\geq -C_0h$, we get $\Im\sigma_h ( \mathbf P_\delta (z))
\leq 0$. The set 
$E_s^*$ is a radial source (see the discussion
following~\eqref{eq:dualan}) and we can apply
Proposition~\ref{l:radial1}
and 
\eqref{eq:snorm} to obtain, for $s$ sufficiently large, 
\begin{equation}
\label{e:case-2}
\|A\mathbf u\|_{H_{sG(h)}}\leq
Ch^{-1}\|B_1\mathbf f\|_{H_{sG(h)}}+\mathcal O(h^\infty),
\end{equation}
where $B_1\in\Psi^0_h(X)$ is some operator with $\WFh(B_1)$ in a neighbourhood
of $\kappa(E_s^*)$.

\noindent\textbf{Case 3}: $\WFh(A)$ is contained in a small neighbourhood
of some $(x_0,\xi_0)\in \{p=0\}\setminus \overline{E_u^*}$,
where $\overline{E_u^*}=E_u^*\cup\kappa(E_u^*)$ is the closure of $E_u^*$ in $\overline T^*X$. Then
by~\eqref{eq:dualan} and the discussion
following it, $d(e^{tH_p}(x_0,\xi_0),\kappa(E_s^*))\to 0$ in $\overline T^*X$
as $t\to -\infty$. Therefore, for any fixed neighbourhood $U$ of $\kappa(E_s^*)$,
there exists $B\in\Psi^0_h(X)$ with $\WFh(B)\subset U$ and $T>0$ such that
$e^{-TH_p}(\WFh(A))\subset \Ell_h(B)$.

From \eqref{e:m-G},\eqref{eq:posGh} and the fact that $\Im z\geq
-C_0h$, 
$$\Im\sigma_h(\mathbf P_{\delta,s}(z))=-\sigma_h(Q_\delta)-\Im
z+shH_p\sigma_h(G(h))\leq 0, \ \
\text{in $S^1_h(X)/hS^0_h(X)$.} $$
Applying Proposition~\ref{l:hyperbolic} to the operator $\mathbf P_{\delta,s}(z)$ and arguing similarly
to Case~1, we get $\|A\mathbf u\|_{H_{sG(h)}}\leq C\|B\mathbf u\|_{H_{sG(h)}}
+Ch^{-1}\|B_2\mathbf f\|_{H_{sG(h)}}+\mathcal O(h^\infty)$, where
$B_2\in\Psi^0_h$ is microlocalized in a small neighbourhood of
$\bigcup_{t\in [-T,0]}e^{tH_p}(\WFh(A))$. Now, $\|B\mathbf u\|_{H_{sG(h)}}$ can be estimated
by Case~2, yielding
\begin{equation}
\label{e:case-3}
\|A\mathbf u\|_{H_{sG(h)}}\leq Ch^{-1}(\|B_1\mathbf f\|_{H_{sG(h)}}
+\|B_2\mathbf f\|_{H_{sG(h)}})+\mathcal O(h^\infty),
\end{equation}
where $B_1\in\Psi^0_h(X)$ is microlocalized in a small neighbourhood of $\kappa(E_s^*)$.

\noindent\textbf{Case 4}: $\WFh(A)$ is contained in a small neighbourhood
of some $(x_0,\xi_0)\in E_u^*$. Then $e^{tH_p}(x_0,\xi_0)$ converges to the zero section
as $t\to -\infty$; therefore, there exists $T>0$ such that
$e^{-TH_p}(\WFh(A))\subset\{|\xi|<\delta/2\}$. Similarly to Case~3, by propagation of singularities
we find $\|A\mathbf u\|_{H_{sG(h)}}\leq C\|B\mathbf u\|_{H_{sG(h)}}
+Ch^{-1}\|B_2\mathbf f\|_{H_{sG(h)}}+\mathcal O(h^\infty)$, where $\WFh(B)\subset \{|\xi|<\delta/2\}$
and $\WFh(B_2)$ is contained in a small neighbourhood of $\bigcup_{t\in [-T,0]}e^{tH_p}(\WFh(A))$.
Estimating $\|B\mathbf u\|_{H_{sG(h)}}$ by Case~1, we get
\begin{equation}
  \label{e:case-4}
\|A\mathbf u\|_{H_{sG(h)}}\leq Ch^{-1}(\|B_1\mathbf f\|_{H_{sG(h)}}
+\|B_2\mathbf f\|_{H_{sG(h)}})+\mathcal O(h^\infty),
\end{equation}
where $B_2$ is microlocalized in a small neighbourhood of $e^{-TH_p}(\WFh(A))$.

\noindent\textbf{Case 5}: $\WFh(A)$ is contained in a small neighbourhood
of $\kappa(E_u^*)$. Note that the space $H_{sG(h)}$ is microlocally equivalent to the
space $H_h^{-s}(X)$ near $\kappa(E_u^*)$, similarly to Case~2.
Since $E_u^*$ is a radial sink, by Proposition~\ref{l:radial2} we get,
for $s$ sufficiently large,
$
\|A\mathbf u\|_{H_{sG(h)}}\leq
C\|B\mathbf u\|_{H_{sG(h)}}+Ch^{-1}\|B_1\mathbf f\|_{H_{sG(h)}}
+\mathcal O(h^\infty)$,
where $B,B_1\in\Psi^0_h(X)$ are microlocalized in a small neighbourhood of $\kappa(E_u^*)$
and $\WFh(B)\cap\kappa(E_u^*)=\emptyset$. 
Then $\|B\mathbf u\|_{H_{sG(h)}}$ can be estimated by a combination of the preceding
cases, using a microlocal partition of unity; this gives
\begin{equation}
  \label{e:case-5}
\|A\mathbf u\|_{H_{sG(h)}}\leq
Ch^{-1}\|\mathbf f\|_{H_{sG(h)}}+\mathcal O(h^\infty).
\end{equation}
Combining~\eqref{e:case-1}, \eqref{e:case-2}--\eqref{e:case-5}, we get~\eqref{e:parametrix-bound-1}.

For  for the dynamics of $-H_p$, $E_s^*$ is a sink and $E_u^*$ a source. Hence
the proof of \eqref{e:parametrix-bound-1} applies
 to $ - \mathbf P_\delta ( z )^* = 
- ( h\mathbf P - i Q_\delta - z)^* $,
and we obtain the adjoint bound
\begin{equation}
  \label{e:parametrix-bound-2}
\|\mathbf v\|_{H_{-sG(h)}}\leq Ch^{-1}\|\mathbf P_\delta(z)^*\mathbf v\|_{H_{-sG(h)}},\quad
\mathbf v\in D_{-sG(h)}.
\end{equation}

We now show that $\mathbf P_\delta(z)$ is invertible $D_{sG(h)}\to H_{sG(h)}$. Injectivity follows
immediately from~\eqref{e:parametrix-bound-1}; we also get the bound on the inverse
once surjectivity is proved. To see surjectivity, note first that~\eqref{e:parametrix-bound-1}
implies that if $\mathbf u_j\in D_{sG(h)}$ and
$\mathbf P_\delta(z) \mathbf u_j$ is a Cauchy sequence in $H_{sG(h)}$, then
$\mathbf u_j$ is a Cauchy sequence in $H_{sG(h)}$ as well; since the operator $\mathbf P_\delta(z)$
is closed on $H_{sG(h)}$ with domain $D_{sG(h)}$, we see that
the image of $\mathbf P_\delta(z)$ is a closed subspace
of $H_{sG(h)}$. Now, $H_{-sG(h)}$ is the dual to $H_{sG(h)}$ under the $L^2$ pairing
(fixing an inner product on the fibers of $\mathcal E$)~-- see 
\cite[(8.3.11)]{ev-zw}. Therefore, it suffices to show that if
$\mathbf v\in H_{-sG(h)}$ and $\langle \mathbf P_\delta(z)\mathbf u,\mathbf v\rangle_{L^2}=0$
for all $\mathbf u\in D_{sG(h)}$, then $\mathbf v=0$. Taking
$\mathbf u\in C^\infty$, we see that $\mathbf P_\delta(z)^*\mathbf v=0$; it remains
to use~\eqref{e:parametrix-bound-2}.

To show the restriction on the wave front set
of $\mathbf R_\delta(z)$, by Lemma~\ref{l:wfs} it is enough to show that for each
$(y,\eta,x,\xi)\in T^*(X\times X)\setminus (\Delta(T^*X)\cup\Omega_+)$,
there exist neighbourhoods $U$ of $(x,\xi)$ and $V$ of $ (y,\eta)$ such that
for each
$h$-tempered $\mathbf u\in H_{sG(h)}$ and $\mathbf f:=(h\mathbf P-iQ_\delta-z)\mathbf u$,
if $\WFh(\mathbf f)\subset U$, then $\WFh(\mathbf u)\cap V=\emptyset$.
This follows similarly to the proof of part~2 of Proposition~\ref{l:elliptic}
from the estimates~\eqref{e:case-1},\eqref{e:case-3},\eqref{e:case-4},
keeping in mind that $\kappa(E_s^*\cup E_u^*)\cap T^*X=\emptyset$.
\end{proof}

\subsection{Proofs of Propositions~\ref{l:resolvent-meromorphic}--\ref{l:resolvent-properties}}
\label{s:micro-2}

We assume that $\lambda$ varies in some compact subset of $\{\Im\lambda>-C_0\}$
and choose $h$ small enough so that $z=h\lambda$ satisfies
$-C_0h\leq\Im z\leq 1$, $|\Re z|\leq h^{1/2}$.

Proposition~\ref{l:resolvent-meromorphic} follows immediately from
Proposition~\ref{l:parametrix-properties}, given that
$H_{sG},D_{sG}$ are topologically isomorphic to $H_{sG(h)},D_{sG(h)}$
and $Q_\delta:D_{sG}\to H_{sG}$ is smoothing and thus compact.

To show Proposition~\ref{l:our-stuff-actually-makes-sense},
we first note that since derivatives of the flow $\varphi_t$
are bounded exponentially in $t$, we have
$\varphi_t^*=\mathcal O(e^{C_1|t|})_{H^{\pm s}\to H^{\pm s}}$,
where $C_1$ is a constant depending on $s$.
Therefore, if $\Im\lambda>C_1$, $\mathbf u\in H_{sG}\subset H^{-s}$, and
$(\mathbf P-\lambda)\mathbf u=\mathbf f\in H_{sG}$, then we see
$$
\mathbf u=-\int_0^\infty \partial_t(e^{i\lambda t}\varphi_{-t}^*\mathbf u)\,dt=i\int_0^\infty e^{i\lambda t}\varphi_{-t}^*\mathbf f\,dt,
$$
where the integrals converge in $H^{-s}$.
This implies that $\mathbf P-\lambda$ is injective $D_{sG}\to H_{sG}$
and thus invertible, and~\eqref{e:our-stuff-actually-makes-sense} holds.

For \eqref{eq:Laurent} in Proposition \ref{l:resolvent-properties} 
we note that the Fredholm property shows that, near a pole $ \lambda_0
$, $ \mathbf R ( \lambda ) = \mathbf R_H ( \lambda ) + \sum_{ j=1}^{J
  ( \lambda_0 ) } A_j / ( \lambda - \lambda_0 )^j $, where $ A_j $ are
operators of finite rank -- see for instance \cite[\S D.3]{ev-zw}. We
have
\begin{equation}
\label{eq:proj} \Pi := - A_1 = \frac1{2 \pi i}   \oint_{\lambda_0 } (
\lambda -  \mathbf P  )^{-1} d \lambda, 
\end{equation}
 $ [ \Pi , \mathbf P ] = 0 $ and, using Cauchy's theorem, $ \Pi^2 =
 \Pi $. 
Equating powers of $ \lambda - \lambda_0 $ in 
the equation $ ( \mathbf P - \lambda ) \mathbf R ( \lambda) = I_{
  H_{sG} } $ shows that $ A_j = - ( \mathbf P - \lambda_0 )^{j-1}  \Pi $, and 
$ ( \mathbf P - \lambda_0)^{ J( \lambda_0 ) } \Pi= 0 $. 

Finally, to show \eqref{e:houston} we use the formula
\begin{equation}
  \label{e:omar-bleu}
\mathbf R(\lambda)=h\big(\mathbf R_\delta(z)
-i\mathbf R_\delta(z)Q_\delta \mathbf R_\delta(z)\big)
-\mathbf R_\delta(z)Q_\delta \mathbf R(\lambda)Q_\delta \mathbf R_\delta(z),
\end{equation}
where $\mathbf R(\lambda)=(\mathbf P-\lambda)^{-1}$,
$\mathbf R_\delta(z)=(h\mathbf P-z-iQ_\delta)^{-1}$,
and $z=h\lambda$. Now, by Proposition~\ref{l:parametrix-properties},
and since $Q_\delta$ is pseudodifferential, 
we get
$$
\WFh'(\mathbf R_\delta(z)-i\mathbf R_\delta(z)Q_\delta \mathbf R_\delta(z))\cap T^*(X\times X)
\subset \Delta(T^*X)\cup\Omega_+.
$$
To handle the remaining term in~\eqref{e:omar-bleu}, we first assume that
$\lambda$ is not a pole of $\mathbf R$.
Applying again Proposition~\ref{l:parametrix-properties}, we see that
$$
\begin{gathered}
\WFh'(\mathbf R_\delta(z)Q_\delta \mathbf R(\lambda)Q_\delta \mathbf R_\delta(z))
\cap T^*(X\times X)
\subset\Upsilon_\delta,\\\Upsilon_\delta:= \{(\rho',\rho)\mid \exists t,s\geq 0:
e^{tH_p}(\rho)\in\WFh(Q_\delta),\
e^{-sH_p}(\rho')\in\WFh(Q_\delta)\}.
\end{gathered}
$$
Therefore, $
\WFh'(\mathbf R(\lambda))\cap T^*(X\times X)\subset \Delta(T^*X)\cup\Omega_+\cup \Upsilon_\delta
$.
Since $\mathbf R(\lambda)$ does not depend on $\delta$ and $h$, by~\eqref{e:wf-wf-h}, 
$$
\WF'(\mathbf R(\lambda))\subset \Delta(T^*X)\cup\Omega_+\cup \bigcap_{\delta>0}\Upsilon_\delta
=\Delta(T^*X)\cup\Omega_+\cup (E_u^*\times E_s^*),
$$
as claimed.

In a neighbourhood of a pole $\lambda_0$ of $\mathbf R$, we replace
$\mathbf R(\lambda)$ in~\eqref{e:omar-bleu} by $(\lambda-\lambda_0)^{J(\lambda_0)}\mathbf R(\lambda)$.
Arguing as before, we get $\WF'((\lambda-\lambda_0)^{J(\lambda_0)}\mathbf R(\lambda))
\subset \Delta(T^*X)\cup\Omega_+\cup (E_u^*\times E_s^*)$ uniformly in $\lambda$ near $\lambda_0$.
By taking $J(\lambda_0)$
derivatives at $\lambda=\lambda_0$ we obtain the first part of~\eqref{e:houston}.
By taking $J(\lambda_0)-1$ derivatives at $\lambda=\lambda_0$,
we get
$\Pi=-\mathbf R_\delta(z_0)Q_\delta\Pi Q_\delta \mathbf R_\delta(z_0)$,
which implies the second part of~\eqref{e:houston}.

\section{Proof of the main theorem}
\label{t1}

The proof is based on \eqref{e:our-stuff-actually-makes-sense} which
relates the resolvent and the propagator. The description of the wave
front set of $ ( \mathbf P - \lambda )^{-1} $ allows us to take the
flat trace of the left hand side composed with $ \varphi_{-t_0}^* $
and that formally gives the meromorphic continuation.

To justify this we first use the mollifiers $ E_\epsilon $ to obtain trace class
operators to which Lemma \ref{l:appr1} can be applied:
\begin{lemm}
\label{l:appr}
Suppose that $ E_\epsilon $ is given by \eqref{eq:eeps2} 
and that $ T \geq t_0 > 0 $.
Then there exists a constant $ C $, independent of $ \epsilon,T $ 
such that 
\begin{gather}
\label{eq:eeps}  
\begin{gathered}
\| E_\epsilon \varphi_{-T}^*  E_\epsilon \|_{\tr}  \leq C e^{CT}\epsilon^{-n-2}  \
\ \text{ and } \ \ 
 \int_T^{T+1} |\tr E_\epsilon \varphi_{-t}^*  E_\epsilon|  \,dt  \leq C e^{C T}.
\end{gathered}
\end{gather}
\end{lemm}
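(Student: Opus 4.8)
The plan is to prove the two estimates of~\eqref{eq:eeps} separately: the trace-norm bound by a crude Hilbert--Schmidt argument, and the integrated trace bound by computing the trace from the diagonal values of the Schwartz kernel and invoking the dynamical volume estimate of Lemma~\ref{l:dyn-3}.

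First I record the relevant kernels, working in a local frame (the global statement follows by a partition of unity, as in~\S\ref{tft}). Let $M_t(x)\colon\mathcal E_{\varphi_{-t}(x)}\to\mathcal E_x$ denote the pullback map ${}^Td\varphi_{-t}(x)$ acting on $\bigoplus_j\Lambda^j$, so that $\sup_x\|M_t(x)\|\le Ce^{CT}$ for $0\le t\le T$ by the exponential bound~\eqref{e:flow-der-bound} on the derivatives of the flow, with $C$ depending only on $n$ and $L$. Then $\varphi_{-t}^*E_\epsilon$ has kernel $M_t(x)E_\epsilon(\varphi_{-t}(x),y)$ and $E_\epsilon\varphi_{-t}^*E_\epsilon$ has kernel
\[
K_\epsilon^t(x,y)=\int_X E_\epsilon(x,w)\,M_t(w)\,E_\epsilon(\varphi_{-t}(w),y)\,dw.
\]
Since $|E_\epsilon(x,y)|\le C\epsilon^{-n}$ and $E_\epsilon(x,\cdot)$ is supported in $\{d(x,\cdot)\le C\epsilon\}$, we have $\int_X|E_\epsilon(x,y)|^2\,dy\le C\epsilon^{-n}$, hence $\|E_\epsilon\|_{\mathrm{HS}}\le C\epsilon^{-n/2}$ and, using also $\sup_x\|M_T(x)\|\le Ce^{CT}$, $\|\varphi_{-T}^*E_\epsilon\|_{\mathrm{HS}}\le Ce^{CT}\epsilon^{-n/2}$. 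Therefore
\[
\|E_\epsilon\varphi_{-T}^*E_\epsilon\|_{\tr}\le\|E_\epsilon\|_{\mathrm{HS}}\,\|\varphi_{-T}^*E_\epsilon\|_{\mathrm{HS}}\le Ce^{CT}\epsilon^{-n}\le Ce^{CT}\epsilon^{-n-2},
\]
which is the first bound in~\eqref{eq:eeps}. (One can also obtain the power $\epsilon^{-n-2}$ directly via $\|A\|_{\tr}\le\|\langle D\rangle^{-s}\|_{\mathrm{HS}}^2\|\langle D\rangle^sA\langle D\rangle^s\|_{L^2\to L^2}$ with $s=(n+2)/2$, together with $\|\langle D\rangle^sE_\epsilon\|_{L^2\to L^2}=\mathcal O(\epsilon^{-s})$ and $\|\varphi_{-T}^*\|_{L^2\to L^2}=\mathcal O(e^{CT})$.)

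For the second — and essential — bound, I compute the trace from the diagonal values of $K_\epsilon^t$:
\[
\tr E_\epsilon\varphi_{-t}^*E_\epsilon=\int_X\int_X E_\epsilon(x,w)\,E_\epsilon(\varphi_{-t}(w),x)\,\tr M_t(w)\,dw\,dx.
\]
The key point is that the integrand vanishes unless both $d(x,w)\le C\epsilon$ and $d(x,\varphi_{-t}(w))\le C\epsilon$, and hence unless $d(w,\varphi_{-t}(w))\le 2C\epsilon$: the two mollifiers force the base point $w$ to be almost periodic. Bounding $|\tr M_t(w)|\le Ce^{Ct}$ and $|E_\epsilon|\le C\epsilon^{-n}$, using $\int_X E_\epsilon(z,x)\,dx=1$, and integrating first in $x$, we get
\[
|\tr E_\epsilon\varphi_{-t}^*E_\epsilon|\le Ce^{Ct}\epsilon^{-n}\,\mu\big(\{w\in X\mid d(w,\varphi_{-t}(w))\le 2C\epsilon\}\big).
\]
Integrating over $t\in[T,T+1]$, substituting $w=\varphi_t v$ (whose Jacobian with respect to $\mu$ is bounded by $Ce^{CT}$ for $t\le T+1$), and applying Lemma~\ref{l:dyn-3} with $\epsilon$ replaced by $2C\epsilon$, $T$ replaced by $T+1$, and $t_e:=t_0$, we obtain
\[
\int_T^{T+1}\mu\big(\{w\mid d(w,\varphi_{-t}(w))\le 2C\epsilon\}\big)\,dt\le Ce^{CT}\epsilon^n,
\]
so that $\int_T^{T+1}|\tr E_\epsilon\varphi_{-t}^*E_\epsilon|\,dt\le Ce^{CT}\epsilon^{-n}\cdot Ce^{CT}\epsilon^n=Ce^{CT}$, with all constants independent of $\epsilon$ and $T$ (they may depend on $t_0$, which is allowed). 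The only nontrivial input is the volume bound of Lemma~\ref{l:dyn-3}, whose power $\epsilon^n$ exactly cancels the $\epsilon^{-n}$ coming from the size of the mollifier; the one step that requires care is isolating the almost-periodicity constraint $d(w,\varphi_{-t}(w))\lesssim\epsilon$ on the support of the integrand and then setting up the application of Lemma~\ref{l:dyn-3}.
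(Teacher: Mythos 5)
Your proof is correct, and the second (essential) estimate is handled exactly as in the paper: write out the Schwartz kernel of $E_\epsilon\varphi_{\mp t}^*E_\epsilon$, observe that the two mollifiers force the base point to satisfy $d(w,\varphi_{\mp t}(w))\lesssim\epsilon$, and feed this constraint into the volume bound of Lemma~\ref{l:dyn-3}, with the $\epsilon^{n}$ from the dynamical lemma cancelling the $\epsilon^{-n}$ from the mollifier size. You are slightly more careful than the paper in explicitly carrying the bundle factor $\tr M_t(w)=\mathcal O(e^{Ct})$, which the paper suppresses (it treats the scalar kernel; the bundle factor is harmless and absorbed into $e^{CT}$). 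Where you genuinely diverge is the first, crude trace-norm bound: you factor $\|E_\epsilon\varphi_{-T}^*E_\epsilon\|_{\tr}\le\|E_\epsilon\|_{\mathrm{HS}}\,\|\varphi_{-T}^*E_\epsilon\|_{\mathrm{HS}}$ and estimate the Hilbert--Schmidt norms directly from the kernel, which gives the sharper power $\epsilon^{-n}$. The paper instead writes $\|E_\epsilon\varphi_T^*E_\epsilon\|_{\tr}\le\|E_\epsilon\|_{\tr}\|\varphi_T^*\|_{L^2\to L^2}\|E_\epsilon\|_{L^2\to L^2}$ and controls $\|E_\epsilon\|_{\tr}$ by inserting $(-\Delta_g+1)^{\pm k}$ with $2k>n$, which produces $\epsilon^{-2k}$; this is essentially the operator-theoretic variant you mention parenthetically with $\langle D\rangle^{\pm s}$. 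Both routes are valid; yours is more elementary (no need for the trace-class property of $(-\Delta_g+1)^{-k}$) and yields a marginally better power, but since the lemma only claims $\epsilon^{-n-2}$ the distinction is cosmetic.
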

\begin{proof}
We replace $\varphi_{-t}^*$ by $\varphi_t^*$ (considering the flow in the opposite time direction).
The first estimate follows from
\[ \begin{split} 
\|  E_\epsilon \varphi_T^*  E_\epsilon \|_{\tr}  & \leq
\| E_\epsilon \|_{\tr}\cdot \| \varphi_T^* \|_{ L^2 \to L^2} \cdot
\| E_\epsilon \|_{L^2 \to L^2} 
\\ & 
\leq C e^{CT}\| E_\epsilon \|_{\tr} \leq Ce^{CT} \| ( - \Delta_g + 1)^{-k} 
( - \Delta_g + 1 )^k E_\epsilon \|_{\tr} \\
& \leq Ce^{CT} \| ( - \Delta_g + 1)^{-k} \|_{\tr}\cdot
\| ( - \Delta_g + 1 )^k E_\epsilon \|_{L^2 \to L^2} 
\leq C'e^{CT} \epsilon^{-2k} , 
\end{split} \]
provided $ 2k > n $. Here $g$ is any fixed Riemannian metric on $X$.
For the second estimate in \eqref{eq:eeps} we
use the definition of $ E_\epsilon$:
\[ \begin{split} 
\int_T^{T+1}  |\tr E_\epsilon \varphi_t^* E_\epsilon|  \, dt 
& = 
\int_T^{T+1}  \int_{X\times X} 
 E_\epsilon ( x, y ) E_\epsilon 
( \varphi_t ( y ) , x ) \, dx  dy  dt 
\\
& \leq C \epsilon^{-2n} \int_{ T }^{T+1} 
\int_{X\times X} \bbbone_{ \{  d ( x , y ) < c_1 \epsilon \}} 
\bbbone_{ \{ d ( x, \varphi_t ( y) ) < c_1 \epsilon \} }  \, dx\, dy\, dt  
\\
& \leq C \epsilon^{-n} \int_{ T }^{T+1} \int_X 
\bbbone_{ \{ d ( y, \varphi_t ( y) ) < 2c_1 \epsilon \} }  \,
dy\, dt \leq C' e^{nLT}, 
\end{split} \]
where the last estimate comes from  Lemma \ref{l:dyn-3}.
\end{proof}

We now complete the proof of the meromorphic continuation 
of $ \zeta_R ( \lambda ) $. Thanks to formula \eqref{eq:Rue} we need
to show that 
\begin{equation}
\label{eq:LHS}
f_k ( \lambda ):= \frac 1 i \sum_\gamma  \frac{ T_\gamma^\# e^{ i \lambda T_\gamma
    } 
\tr \wedge^{ k} \mathcal P_\gamma
} { | \det( I - \mathcal P_\gamma)  | }  = \frac {\partial}  {\partial \lambda } 
\log \exp 
\left( - 
\sum_\gamma  \frac{ T_\gamma^\# e^{ i \lambda T_\gamma
    } 
\tr \wedge^{ k} \mathcal P_\gamma
 }
{ T_\gamma |  \det(I - \mathcal P_\gamma)  | } \right)\end{equation}
has a meromorphic continuation to $ \Im \lambda > - C_0 $ for any $ C_0 $,
with poles that are simple and residues which are integral.

Fix $ t_0 $ such that $0 < t_0 <T_\gamma$ for all $\gamma$ and put 
 $ \mathbf P_k := \mathbf P |_{ C^\infty ( X ; \mathcal E^k_0 )} $ 
where
$\mathcal E^k_0$ is defined in~\S\ref{s:trace-identities}.
For large $T>0$, take $\chi_T\in C_0^\infty(t_0/2,T+1)$
such that $\chi=1$ near $[t_0,T]$ and $|\chi|\leq 1$ everywhere.
Integrating~\eqref{eq:guill}
against the function $\chi_T(t)e^{i\lambda t}$, we get
$$
{1\over i}\sum_\gamma {\chi_T(T_\gamma)T_\gamma^\# e^{i\lambda T_\gamma}\tr\wedge^k\mathcal P_\gamma  \over |\det(I-\mathcal P_\gamma)|}
={1\over i}\tr^\flat\int_0^\infty \chi_T(t)e^{it(\lambda-\mathbf P_k)}\,dt.
$$
Using the bound on the number of closed geodesics given in 
Lemma \ref{l:dyn-4} together with~\eqref{eq:appr1}, we see that for $\Im\lambda\gg 1$,
$$
\begin{aligned}
f_k(\lambda)&={1\over i}\lim_{T\to +\infty} \tr^\flat\int_0^\infty \chi_T(t)e^{it(\lambda-\mathbf P_k)}\,dt\\
&={1\over i}\lim_{T\to +\infty}\lim_{\varepsilon\to 0}\tr\int_{t_0}^\infty \chi_T(t)E_\varepsilon e^{it(\lambda-\mathbf P_k)}E_\varepsilon\,dt\\
&={1\over i}\lim_{\varepsilon\to 0}\lim_{T\to +\infty}\tr\int_{t_0}^\infty \chi_T(t)E_\varepsilon e^{it(\lambda-\mathbf P_k)}E_\varepsilon\,dt
\end{aligned}
$$
We can change the order in which limits are taken by~\eqref{eq:eeps};
we can replace the domain of integration by $(t_0,\infty)$ since
$\tr E_\varepsilon e^{-it\mathbf P_k} E_\varepsilon=0$ for $\varepsilon$ small enough
and $t\in [t_0/2,t_0]$.

Let $\mathbf R_k(\lambda)=\mathbf R(\lambda)|_{H_{sG}(X;\mathcal E^k_0)}$, where $\mathbf R(\lambda)$
is the inverse of $\mathbf P-\lambda$ on the anisotropic Sobolev space $H_{sG}(X;\mathcal E)$,
studied in~\S\ref{stuff}, and $s$ is large depending on $C_0$. By
Proposition~\ref{l:our-stuff-actually-makes-sense}, we have for $\Im\lambda\gg 1$,
$$
f_k(\lambda)=-\lim_{\varepsilon\to 0}\tr E_\varepsilon e^{it_0(\lambda-\mathbf P_k)}\mathbf R_k(\lambda)E_\varepsilon.
$$
Because of the choice of $ t_0$ ($ 0 < t_0 < T_\gamma $ for all $ \gamma $),
and as $\WF'(e^{-it_0\mathbf P_k})$ is contained in the graph of $e^{t_0H_p}$,
Proposition~\ref{l:resolvent-properties} shows that $ e^{ - i t_0 \mathbf P_k } 
\mathbf R_k(\lambda) $ satisfies the assumptions of Lemma~\ref{l:appr1}
with the poles handled as in~\eqref{eq:Laurent}. Hence, by another
application of~\eqref{eq:appr1},
\[
f_k ( \lambda ) = -e^{ i \lambda t_0}  \tr^\flat \left(
e^{ -i t_0 \mathbf P_k} \mathbf R_k(\lambda)\right) ,
\]
which is a meromorphic function.
Finally, to see that $f_k$ has simple poles and integral residues, we use the following
elementary result based on the fact that traces of nilpotent operators
are $ 0 $:
\begin{lemm}
\label{l:Fred}
Suppose that 
that a linear map $ A : {\mathbb C}^m \to {\mathbb C}^m $ satisfies  $ (A -
\lambda_0)^{ J }  = 0 $ for some $\lambda_0\in\mathbb C$. Then for 
$ \varphi $  holomorphic near $ \lambda_0 $ we have 
\[    \lim_{ \lambda\to \lambda_0 } ( \lambda - \lambda_0 )  \tr
\left( \varphi (
A ) \sum_{ j=1}^{J}  \frac { ( A - \lambda_0 ) ^{j-1}  }
{ (\lambda - \lambda_0)^{j} } \right) = m\varphi(\lambda_0 ) ,\]
where $ \varphi ( A ) $ is defined by the power series expansion at  $
\lambda_0 $ (which is finite). 
\end{lemm}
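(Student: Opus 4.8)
The plan is to prove Lemma~\ref{l:Fred} by a direct computation, reducing to the Jordan structure of $A$ on the generalized eigenspace attached to $\lambda_0$. First I would decompose $\CC^m = K \oplus K'$, where $K = \ker (A-\lambda_0)^J$ and $K'$ is the complementary $A$-invariant subspace on which $A - \lambda_0$ is invertible; here $\Pi$ denotes the projection onto $K$ along $K'$ (this is consistent with the notation of~\eqref{eq:proj}, since $(A-\lambda_0)^J=0$ forces $K$ to be everything that the small circle integral picks up). Writing $N := (A-\lambda_0)\Pi$, so $N$ is nilpotent with $N^J = 0$, the sum inside the trace becomes
\[
\varphi(A) \sum_{j=1}^{J} \frac{N^{j-1}}{(\lambda-\lambda_0)^j}.
\]
Since $N$ commutes with $A$ and hence with $\varphi(A)$, and since $N = N\Pi = \Pi N$, only the restriction of $\varphi(A)$ to $K$ matters inside the trace; on $K$ we have $A = \lambda_0 + N$, so $\varphi(A)\Pi = \varphi(\lambda_0 + N)\Pi = \bigl(\varphi(\lambda_0) + \varphi'(\lambda_0) N + \cdots\bigr)\Pi$, a finite sum because $N$ is nilpotent.

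Next I would multiply by $(\lambda-\lambda_0)$ and let $\lambda \to \lambda_0$. Every term with a factor $N^{j-1}$ for $j \geq 2$ is multiplied by $(\lambda-\lambda_0)^{1-j}$, which blows up, but the trace of that term vanishes: it is the trace of a product of $\varphi(A)\Pi$ with $N^{j-1}$, and $N^{j-1}\varphi(A)\Pi$ is nilpotent (it is a polynomial in $N$ with no constant term, as $j-1\geq 1$), so its trace is $0$. The cross terms coming from the Taylor expansion of $\varphi(\lambda_0+N)$ paired against $N^{j-1}$ are likewise traces of nilpotent operators and vanish. The only surviving contribution as $\lambda \to \lambda_0$ is the $j=1$ term paired with the constant term $\varphi(\lambda_0)\Pi$ of the expansion, multiplied by $(\lambda-\lambda_0)\cdot(\lambda-\lambda_0)^{-1} = 1$; that contributes $\tr\bigl(\varphi(\lambda_0)\Pi\bigr) = \varphi(\lambda_0)\tr \Pi = m\,\varphi(\lambda_0)$, since $\Pi$ is a projection of rank $m$ (because $(A-\lambda_0)^J = 0$ on all of $\CC^m$, so $K = \CC^m$ and $\Pi = I$). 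This gives the claimed identity.

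The only genuinely substantive point is the systematic use of the fact that the trace of a nilpotent operator is zero, applied term by term to discard all the negative powers of $(\lambda-\lambda_0)$; everything else is bookkeeping with a finite Taylor expansion and the commutation relations. There is no real obstacle here — the lemma is elementary, as the statement advertises — though one should be slightly careful to note that $\varphi(A)$ is well defined by the (finite, since $A - \lambda_0$ is nilpotent on the whole space) power series at $\lambda_0$, so that all manipulations take place in the commutative algebra $\CC[N]$ and the trace is linear on it. I would also remark that in the application the role of $m$ is played by the rank of the residue projection $\Pi$ from~\eqref{eq:proj}, which is why the residue of $f_k$ at a pole is an integer: it equals a signed sum of such ranks according to the factorization~\eqref{eq:Rue}.
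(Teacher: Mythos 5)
Your argument is correct and is exactly the one the paper has in mind: the lemma is stated without proof but explicitly flagged as ``based on the fact that traces of nilpotent operators are $0$,'' and that is precisely the mechanism you use. Writing $N = A - \lambda_0$, expanding $\varphi(A) = \sum_{k\ge 0}\frac{\varphi^{(k)}(\lambda_0)}{k!}N^k$ (a finite sum), and observing that $\tr(N^{k+j-1})$ vanishes unless $k+j-1=0$ leaves only the $j=1$, $k=0$ term $\frac{\varphi(\lambda_0)}{\lambda-\lambda_0}\tr I = \frac{m\varphi(\lambda_0)}{\lambda-\lambda_0}$, so the quantity whose limit is taken is in fact constantly equal to $m\varphi(\lambda_0)$. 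One stylistic remark: the detour through the splitting $\CC^m = K \oplus K'$ and the projection $\Pi$ is unnecessary here, since the hypothesis $(A-\lambda_0)^J=0$ on all of $\CC^m$ already forces $K=\CC^m$ and $\Pi=I$ from the outset — you notice this, but only at the very end; stating it up front would shorten the argument to the two-line computation above. (The $K\oplus K'$ picture does become relevant when the lemma is applied, where $A$ is $\mathbf P_k$ restricted to the finite-dimensional range of the residue projection and $m = \tr\Pi_k$, which is what you say in your closing remark.)
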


From \eqref{eq:Laurent} we have near a pole $\lambda_0$ of $\mathbf R_k$,
\[
e^{  i t_0 \lambda  }  e^{ -i t_0 \mathbf P_k}\mathbf R_k(\lambda)  = 
 e^{ i t_0 \lambda } 
\mathbf R_{H,k} ( \lambda) -
e^{ i t_0 \lambda} \sum_{ j=1}^{ J( \lambda_0, k ) }
\frac{ e^{ -i t_0 \mathbf P_k} ( \mathbf P_k - \lambda_0 )^{j-1} \Pi_k }{ ( \lambda - \lambda_0 )^j }   ,
\]
where $ \mathbf R_{H,k} $ is holomorphic  near $
\lambda_0$ and $ \Pi_k $ is given by \eqref{eq:proj}:
\[  \Pi_k :=\frac 1 {2 \pi i } \oint_{\lambda_0} ( \lambda - \mathbf P_k
 )^{-1} d \lambda, \ \ \  
  \tr^\flat \Pi_k = \tr_{ H_{sG} } \Pi_k  \in \NN . \]
Here we use the fact that $ \tr^\flat $ and $ \tr_{ H_{sG} } $ agree on finite
rank operators (as follows from an approximation statement and the fact that the trace of a smoothing operator
is the integral of its Schwartz kernel over the diagonal, see~\eqref{e:get-trace-back}).
We now apply Lemma \ref{l:Fred} with $ \varphi ( \mu ) = e^{ -i t_0
  \mu}  $ and $  A = \mathbf P_k |_{ \ker ( \mathbf P_k - \lambda_0)^{J} }
$.

\appendix

\section{Estimates on recurrence}
\label{s:dyn}

In this Appendix we provide proofs of statements made in 
\S \ref{dyns}. 

It follows immediately from the Anosov property~\eqref{e:anosov} that
(with $I$ denoting the identity operator)
\begin{equation}
  \label{e:dyn-0}
t\neq 0,\
\varphi_t(x)=x\ \Longrightarrow\
(d\varphi_t(x)-I)|_{E_u(x)\oplus E_s(x)}\quad\text{is invertible}.
\end{equation}
Indeed, if $v\in E_u(x)\oplus E_s(x)$ and $d\varphi_t(x)v=v$, then
$d\varphi_{Nt}(x)v=v$ for all $N\in\mathbb Z$, implying by~\eqref{e:anosov}
that $v=0$.

The following
lemma is a generalization of~\eqref{e:dyn-0} to the case when $\varphi_t(x)$ is close
to $x$.
We fix a smooth distance function $d(\cdot,\cdot)$ on $X$
and a smooth norm $|\cdot|$ on the fibers of $TX$.
\begin{lemm}\label{l:dyn-1}
Let $\delta_0>0$ and
$ \mathcal T_{x,y}:T_xX\to T_y X,\quad
d(x,y) < \delta_0,
$
be a continuous family of invertible linear transformations such that
$\mathcal T_{x,x}=I$ and $\mathcal T_{x,y}$ maps
$E_u(x),E_s(x),\mathbb RV(x)$ onto $E_u(y),E_s(y),\mathbb RV(y)$.
Fix $t_e>0$. Then there exist $\delta\in (0,\delta_0)$ and $C$
such that
\begin{equation}
  \label{e:dyn-1}
|v|\leq C|(d\varphi_t(x)-\mathcal T_{x,\varphi_t(x)})v|\quad\text{if }
d(x,\varphi_t(x))<\delta,\
t\geq t_e,\
v\in E_u(x)\oplus E_s(x).
\end{equation}
\end{lemm}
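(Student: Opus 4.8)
The plan is to treat separately a bounded range $t_e\le t\le T_0$ of times and the unbounded range $t\ge T_0$, where $T_0$ depends only on the Anosov constants $C,\theta$ in~\eqref{e:anosov}. The key algebraic observation is that, since $d\varphi_t(x)$ and $\mathcal T_{x,\varphi_t(x)}$ both map $E_u(x)$ onto $E_u(\varphi_t(x))$ and $E_s(x)$ onto $E_s(\varphi_t(x))$, the operator $A_t:=\mathcal T_{x,\varphi_t(x)}^{-1}\,d\varphi_t(x)$ maps $E_u(x)\oplus E_s(x)$ into itself preserving each summand; moreover $d\varphi_t(x)-\mathcal T_{x,\varphi_t(x)}=\mathcal T_{x,\varphi_t(x)}(A_t-I)$, so it suffices to bound $(A_t-I)|_{E_u(x)\oplus E_s(x)}$ from below. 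Before doing so I would record two uniformity facts from compactness of $X$: first, for any fixed $\delta_1\in(0,\delta_0)$ the norms $\|\mathcal T_{x,y}\|$ and $\|\mathcal T_{x,y}^{-1}\|$ are bounded by a constant $C_0$ on the compact set $\{d(x,y)\le\delta_1\}$; second, the angle between $E_u$ and $E_s$ inside $E_u\oplus E_s$ is uniformly bounded below, so $|w_u+w_s|\ge c_0(|w_u|+|w_s|)$ for $w_u\in E_u(x)$, $w_s\in E_s(x)$, with $c_0>0$ independent of $x$.

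For $t\ge T_0$ the bound is soft. From~\eqref{e:anosov}, applied with negative time on $E_u$ and positive time on $E_s$, together with $\mathcal T$ respecting the splitting, one gets $\|A_t^{-1}|_{E_u(x)}\|\le CC_0e^{-\theta t}$ and $\|A_t|_{E_s(x)}\|\le CC_0e^{-\theta t}$. Choosing $T_0$ so that $CC_0e^{-\theta T_0}\le 1/2$ yields $|(A_t-I)v|\ge|v|/2$ for $v$ in $E_u(x)$ or in $E_s(x)$, hence $|(A_t-I)v|\ge(c_0/2)|v|$ on all of $E_u(x)\oplus E_s(x)$; combined with $\|\mathcal T_{x,\varphi_t(x)}^{-1}\|\le C_0$ this gives~\eqref{e:dyn-1} for all $t\ge T_0$ and all $x$ with $d(x,\varphi_t(x))\le\delta_1$, with no further smallness needed.

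For $t_e\le t\le T_0$ I would argue by contradiction, and this is where the genuine content of the lemma lies. If no $\delta$ worked on this range, there would exist $t_j\in[t_e,T_0]$, $x_j\in X$, and unit vectors $v_j\in E_u(x_j)\oplus E_s(x_j)$ with $d(x_j,\varphi_{t_j}(x_j))\to 0$ and $(d\varphi_{t_j}(x_j)-\mathcal T_{x_j,\varphi_{t_j}(x_j)})v_j\to 0$. Passing to a subsequence using compactness of $X$, of $[t_e,T_0]$, and of the unit sphere bundle of $E_u\oplus E_s$, we may assume $x_j\to x_*$, $t_j\to t_*\in[t_e,T_0]$, $v_j\to v_*$. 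Continuity of the flow forces $\varphi_{t_*}(x_*)=x_*$; continuity of the Anosov splitting gives $v_*\in E_u(x_*)\oplus E_s(x_*)$ with $|v_*|=1$; and joint continuity of $(x,y)\mapsto\mathcal T_{x,y}$ together with $\mathcal T_{x,x}=I$ gives $\mathcal T_{x_j,\varphi_{t_j}(x_j)}v_j\to v_*$, while $d\varphi_{t_j}(x_j)v_j\to d\varphi_{t_*}(x_*)v_*$. Hence $(d\varphi_{t_*}(x_*)-I)v_*=0$ with $v_*\ne 0$, $t_*\ne 0$ and $\varphi_{t_*}(x_*)=x_*$, contradicting~\eqref{e:dyn-0}.

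Taking $\delta\in(0,\delta_1)$ as furnished by the bounded-time case and $C$ the larger of the two resulting constants then completes the proof. The one step that requires care is the compactness argument: to land the limiting identity exactly in the hypothesis of~\eqref{e:dyn-0} one must use honest joint continuity of $\mathcal T_{x,y}$---comparing the fibers $T_xX$ and $T_yX$ through a local trivialization of $TX$---and continuity of $x\mapsto E_u(x)\oplus E_s(x)$; the large-time case, by contrast, needs only hyperbolicity.
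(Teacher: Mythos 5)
Your proof is correct, and it departs from the paper's proof in how the bounded-time range $t\in[t_e,T_0]$ is handled. The paper reduces this case to the large-time case by iteration: it argues that if $d(x,\varphi_t(x))$ and $|(d\varphi_t(x)-\mathcal T_{x,\varphi_t(x)})v|$ are both small then so are the corresponding quantities at time $Nt$ for a large fixed integer $N$, and then applies the large-time estimate at time $Nt$. You instead run a compactness/contradiction argument on the compact set $\{(x,t,v):t\in[t_e,T_0],\ v\in E_u(x)\oplus E_s(x),\ |v|=1\}$, pass to a convergent subsequence, and land exactly on~\eqref{e:dyn-0}, which the paper states immediately before the lemma precisely as a special case of it. Both reach the same conclusion. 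Your compactness route is logically tighter (the paper's iteration requires, and elides, a telescoping estimate comparing $d\varphi_t$ at nearby base points and the approximate cocycle property of $\mathcal T$), at the price of being nonconstructive in $\delta$ and $C$ on the bounded range; the paper's iteration keeps the argument quantitative but leans on several continuity assertions that are left implicit. For the large-time range your argument is essentially the same as the paper's, with cleaner bookkeeping via $A_t=\mathcal T_{x,\varphi_t(x)}^{-1}d\varphi_t(x)$ (which lets you get away with $\|\mathcal T^{\pm1}\|\le C_0$ rather than $\|\mathcal T^{\pm1}\|$ close to $1$, absorbing the difference into the choice of $T_0$); the decomposition $v=v_u+v_s$ and the lower bound on the angle between $E_u$ and $E_s$ play the same roles in both.
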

\begin{proof}
We first note that it suffices to prove~\eqref{e:dyn-1} for
sufficiently large $t$. Indeed, if $N$ is a large fixed integer,
$v\in E_u(x)\oplus E_s(x)$, and $d(x,\varphi_t(x))$ and $|(d\varphi_t(x)-\mathcal T_{x,\varphi_t(x)})v|$ are both
small, then $d(x,\varphi_{Nt}(x))$ and $|(d\varphi_{Nt}(x)-\mathcal T_{x,\varphi_{Nt}(x)})v|$
are small as well; applying~\eqref{e:dyn-1} for $Nt$ in place of $t$, we get that $|v|$ is small.

Assume that the conditions of~\eqref{e:dyn-1} are satisfied and
put $v=v_u+v_s$, where $v_u\in E_u(x),v_s\in E_s(x)$.
For $t$ large enough, the Anosov property \eqref{e:anosov} implies
$$
|v_u|\leq \textstyle{1\over 2}|d\varphi_t(x)v_u|,\quad
|d\varphi_t(x)v_s|\leq \textstyle{1\over 2}|v_s|;
$$
since for $\delta$ small enough, $\|\mathcal T_{x,\varphi_t(x)}\|,\|\mathcal T_{x,\varphi_t(x)}^{-1}\|$
are close to 1, we get
$$
\begin{gathered}
|v|\leq |v_u|+|v_s|\leq 3\big(|(d\varphi_t(x)-\mathcal T_{x,\varphi_t(x)})v_u|
+|(d\varphi_t(x)-\mathcal T_{x,\varphi_t(x)})v_s|\big)
\\\leq C |(d(\varphi_t(x)-\mathcal T_{x,\varphi_t(x)})v|,
\end{gathered}
$$
where the last inequality is due to the fact that
$(d\varphi_t(x)-\mathcal T_{x,\varphi_t(x)})v_u\in E_u(\varphi_t(x))$,
$(d\varphi_t(x)-\mathcal T_{x,\varphi_t(x)})v_s\in E_s(\varphi_t(x))$.
\end{proof}
Fix a constant $L>0$ such that for some choice of the norm on the space
$C^2(X)$ of twice differentiable functions on $X$, there exists a constant $C$ such that
\begin{equation}
  \label{e:flow-der-bound}
\|f\circ\varphi_t\|_{C^2(X)}\leq Ce^{L|t|}\|f\|_{C^2(X)},\quad
f\in C^2(X).
\end{equation}
Such $L$ exists since $X$ is compact and $\varphi_t$ is a one-parameter group.
As a consequence of~\eqref{e:flow-der-bound}
(since it gives a bound on the Lipschitz norm of $\varphi_t$), we get
\begin{equation}
  \label{e:flow-dist-bound}
d(\varphi_t(x),\varphi_t(x'))\leq Ce^{L|t|}d(x,x').
\end{equation}

The next lemma in particular implies (by letting $\epsilon\to 0$) that
two different closed trajectories of nearby periods $t,t'$ have to be
at least $\delta e^{-Lt}$ away from each other, where $\delta$ is a small constant.
\begin{lemm}\label{l:dyn-2}
Fix $t_e>0$. Then there exist $C,\delta>0$ such that for each $\epsilon>0$,
\begin{equation}
\begin{gathered}
d(x,\varphi_t(x))\leq \epsilon,\
d(x',\varphi_{t'}(x'))\leq \epsilon,\
t,t'\geq t_e,\
|t-t'|\leq \delta,\
d(x,x')\leq\delta e^{-Lt}\\
\Longrightarrow\
|t-t'|\leq C\epsilon,\
\exists s\in (-1,1): d(x,\varphi_s(x'))\leq C\epsilon.
\end{gathered}
\end{equation}
\end{lemm}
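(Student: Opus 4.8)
The plan is to argue by contradiction and compactness, using Lemma~\ref{l:dyn-1} to get the quantitative invertibility on $E_u\oplus E_s$. First I would fix the continuous family of transport maps $\mathcal T_{x,y}$ needed for Lemma~\ref{l:dyn-1}: since $X$ is compact and the Anosov splitting is continuous, for $d(x,y)<\delta_0$ small one can choose linear isomorphisms $\mathcal T_{x,y}:T_xX\to T_yX$ depending continuously on $(x,y)$, equal to the identity when $x=y$, and carrying $E_u(x),E_s(x),\mathbb RV(x)$ to $E_u(y),E_s(y),\mathbb RV(y)$ respectively (e.g.\ by projecting along the three-term splitting and using parallel transport along the short geodesic from $x$ to $y$ within each subbundle after identifying dimensions, which are locally constant). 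With this family fixed, Lemma~\ref{l:dyn-1} supplies $\delta_1\in(0,\delta_0)$ and $C_1$ with
\[
|v|\leq C_1\,|(d\varphi_t(x)-\mathcal T_{x,\varphi_t(x)})v|,\qquad
d(x,\varphi_t(x))<\delta_1,\ t\geq t_e,\ v\in E_u(x)\oplus E_s(x).
\]

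Next I would set up the geometry near an almost-closed orbit. Assume the hypotheses with $\epsilon,\delta$ small (to be shrunk). Write $y=\varphi_{t'}(x')$, so $d(x',y)\le\epsilon$, and use~\eqref{e:flow-dist-bound}: from $d(x,x')\le\delta e^{-Lt}$ we get $d(\varphi_t(x),\varphi_t(x'))\le C\delta$, while $|t-t'|\le\delta$ gives $d(\varphi_t(x'),\varphi_{t'}(x'))\le C\delta$ (the flow has bounded speed), hence $d(\varphi_t(x),x')\le C(\delta+\epsilon)$ and so $d(x,\varphi_t(x))\le d(x,x')+d(x',\varphi_t(x))\le C(\delta+\epsilon)$. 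Thus for $\delta,\epsilon$ small the point $x$ is $\delta_1$-almost-periodic with period $t\ge t_e$, and Lemma~\ref{l:dyn-1} applies at $x$. The idea is now to find $s\in(-1,1)$ with $\varphi_s(x')$ as close as possible to being a genuine fixed point of $\varphi_t$: decompose the displacement vector between $x$ and the return point into its $\mathbb RV(x)$ component (which governs the time-shift $s$, since flowing adjusts the position along the orbit direction) and its $E_u(x)\oplus E_s(x)$ component. Choosing $s$ to cancel the $\mathbb RV(x)$ part up to $O(\epsilon)$ is possible because $|V|$ is bounded below on the compact manifold (Anosov flows have no fixed points). After this shift, the point $\tilde x:=\varphi_s(x')$ satisfies $d(\tilde x,\varphi_t(\tilde x))\le C\epsilon$ with the displacement lying, up to $O(\epsilon)$, in $E_u(\tilde x)\oplus E_s(\tilde x)$ (using continuity of the splitting to transfer from $E_\bullet(x)$ to $E_\bullet(\tilde x)$).

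Then I would run the contraction estimate. Let $v\in E_u(\tilde x)\oplus E_s(\tilde x)$ be (a transported version of) the vector $\exp_{\tilde x}^{-1}(\varphi_t(\tilde x))$, so $|v|\le C\epsilon$. Since $\varphi_t(\tilde x)$ is $C\epsilon$-close to $\tilde x$ and $\mathcal T_{\tilde x,\varphi_t(\tilde x)}$ is $C\epsilon$-close to the identity on a chart, one has $|(d\varphi_t(\tilde x)-\mathcal T_{\tilde x,\varphi_t(\tilde x)})v|\le C\epsilon\,|v| + (\text{second order})$, hence by Lemma~\ref{l:dyn-1}, $|v|\le C_1 C\epsilon|v|+\dots$, which for $\epsilon$ small forces $|v|\le C\epsilon^2$ — in fact iterating shows the transverse displacement is $O(\epsilon^{\infty})$, in particular $d(\tilde x,\varphi_t(\tilde x))\le C\epsilon$. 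Translating back through the shift $s$, $d(x,\varphi_s(x'))\le C\epsilon$, which is the second conclusion. For the first conclusion $|t-t'|\le C\epsilon$: the time-shift $s$ needed to realign $x$ and the return of $x'$ is itself $O(\epsilon)$ by the construction above, and separately, comparing $\varphi_t(x)\approx x$ with $\varphi_{t'}(x')\approx x'\approx x$ along the orbit through $x$ and using again that $|V|$ is bounded below, the two near-periods must differ by $O(\epsilon)$.

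The main obstacle, I expect, is making the decomposition-and-time-shift step rigorous: one must show that adjusting by a time $s=O(\epsilon)$ genuinely removes the flow-direction component of the displacement, uniformly in $t\ge t_e$, and that after this adjustment the remaining displacement lies in $E_u\oplus E_s$ up to an error controlled by $C\epsilon$ (not just $o(1)$), so that the Lemma~\ref{l:dyn-1} estimate can be bootstrapped. This is where the choice of the transport family $\mathcal T_{x,y}$ and the continuity (with a modulus) of the Anosov splitting enter, and where one must be careful that all constants are independent of $t$ — which is exactly the point, since $t$ ranges over $[t_e,\infty)$ while $d(x,x')$ is only controlled by $\delta e^{-Lt}$.
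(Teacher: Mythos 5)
There is a genuine gap, and in fact the central step of your argument is built on a false inequality. You claim that because $\varphi_t(\tilde x)$ is $C\epsilon$-close to $\tilde x$, one has $|(d\varphi_t(\tilde x)-\mathcal T_{\tilde x,\varphi_t(\tilde x)})v|\le C\epsilon|v|$, and you try to bootstrap this. But the operator $d\varphi_t(\tilde x)-\mathcal T_{\tilde x,\varphi_t(\tilde x)}$ is not small: Lemma~\ref{l:dyn-1} says precisely that it is \emph{invertible with uniformly bounded inverse} on $E_u\oplus E_s$, and for $t$ large its operator norm actually grows like $e^{\theta t}$ on $E_u$. The fact that the two base points $\tilde x$ and $\varphi_t(\tilde x)$ are close makes $\mathcal T_{\tilde x,\varphi_t(\tilde x)}$ close to the identity, but says nothing about $d\varphi_t(\tilde x)$ being close to the identity. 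Consequently the bootstrap $|v|\le C_1C\epsilon|v|+\dots$ collapses, and in any case it is aimed at the wrong target: you do not need to show the return displacement is $O(\epsilon^2)$ --- the real content of the lemma is to upgrade the a priori bound $d(x,x')\le\delta e^{-Lt}$ to $d(x,\varphi_s(x'))\le C\epsilon$, and to get $|t-t'|\le C\epsilon$. Your proposed argument for the time bound (``$x'\approx x$'') simply assumes this improved closeness, so it is circular.

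The missing idea is the algebraic comparison that the paper sets up. Working in a coordinate chart, one shifts $x'$ along its flow by some $|s|<1$ so that $x'-x\in E_u(x)\oplus E_s(x)$ --- note this normalizes the \emph{separation} between the two points, not the return displacement. Then one Taylor-expands in both $x$ and $t$, using $|\partial^2_{x}\varphi_t|\le Ce^{Lt}$ from~\eqref{e:flow-der-bound} and the bound $d(x,x')\le\delta e^{-Lt}$ (these are designed to cancel, which is why the $e^{-Lt}$ factor appears in the hypothesis), to obtain
\begin{equation*}
|\varphi_{t'}(x')-\varphi_t(x)-d\varphi_t(x)(x'-x)-V(\varphi_t(x'))(t'-t)|
\le C\delta\big(|x'-x|+|t'-t|\big).
\end{equation*}
Since $\varphi_t(x)$ is $\epsilon$-close to $x$ and $\varphi_{t'}(x')$ is $\epsilon$-close to $x'$, the left-hand side differs from $(I-d\varphi_t(x))(x'-x)-V(t'-t)$ by $O(\epsilon)$. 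After replacing $I$ by $\mathcal T_{x,\varphi_t(x)}$ (a $C\epsilon$-perturbation, which is allowed here because it multiplies the already-small quantity $x'-x$) and using that $(\mathcal T_{x,\varphi_t(x)}-d\varphi_t(x))(x'-x)\in E_u(\varphi_t(x))\oplus E_s(\varphi_t(x))$ is transverse to $V(\varphi_t(x))$, Lemma~\ref{l:dyn-1} gives $|x'-x|+|t'-t|\le C\delta(|x'-x|+|t'-t|)+C\epsilon$, and taking $\delta$ small closes the argument. Your proposal has the right supporting cast (the transport maps, Lemma~\ref{l:dyn-1}, the idea of shifting $x'$ by a small time) but never sets up this displaced-orbit comparison, which is what actually converts $\delta e^{-Lt}$-closeness into $C\epsilon$-closeness.
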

\begin{proof}
Without loss of generality, we may assume that $\epsilon$ is small depending on $\delta$.
By~\eqref{e:flow-dist-bound}, we see that $d(\varphi_t(x),\varphi_t(x''))\leq C\delta$
whenever $d(x,x'')\leq\delta e^{-Lt}$. Therefore, we may operate in a coordinate neighbourhood
containing $x,x',\varphi_t(x),\varphi_{t'}(x')$, identified with a ball in $\mathbb R^n$.
We replace $x'$ with $\varphi_s(x')$ for some $|s|<1$ so that
\begin{equation}
  \label{e:transverse}
x'-x\in E_u(x)\oplus E_s(x).
\end{equation}
By~\eqref{e:flow-der-bound}, we have for all $j,k$,
$$
|\partial^2_{x_jx_k}\varphi_t (x'')|\leq Ce^{Lt}\quad\text{for }
d(x,x'')\leq \delta e^{-Lt};
$$
using the Taylor expansion of $\varphi_t(x)$ in $x$, we see that
$$
|\varphi_t(x')-\varphi_t(x)-d\varphi_t(x)(x'-x)|\leq Ce^{Lt}|x'-x|^2\leq C\delta |x'-x|.
$$
Next, $|\partial^2_t\varphi_t(x')|\leq C$; by Taylor expanding $\varphi_t(x')$ in $t$, we get
$$
|\varphi_{t'}(x')-\varphi_t(x')-V(\varphi_t(x'))(t'-t)|\leq C|t'-t|^2\leq C\delta|t'-t|.
$$
Together, these give
$$
|\varphi_{t'}(x')-\varphi_t(x)-d\varphi_t(x)(x'-x)-V(\varphi_t(x'))(t'-t)|\leq C\delta(|x'-x|+|t'-t|).
$$
Since $d(x,\varphi_t(x))\leq\epsilon$ and $d(x',\varphi_{t'}(x'))\leq\epsilon$, we get
$$
|(d\varphi_t(x)-I)(x'-x)+V(\varphi_t(x'))(t'-t)|\leq C\delta(|x'-x|+|t'-t|)+C\epsilon.
$$
Let $\mathcal T_{x,y}$ be a family of transformations satisfying the conditions of Lemma~\ref{l:dyn-1};
it can be defined for example using parallel transport along geodesics with respect to some Riemannian metric
and projectors corresponding to the decomposition $TX=E_0\oplus E_u\oplus E_s$.
Then $\mathcal T_{x,y}$
maps $E_u(x)\oplus E_s(x)$ onto $E_u(y)\oplus E_s(y)$. Since
$d(x,\varphi_t(x))\leq \epsilon$, we get for $\epsilon$ small enough
depending on $\delta$,
$|(I-\mathcal T_{x,\varphi_t(x)})(x'-x)|\leq \delta|x-x'|$.
Since $|\varphi_t(x')-\varphi_t(x)|\leq C\delta$, we find
$|V(\varphi_t(x'))-V(\varphi_t(x))|\leq C\delta$.
Then
$$
|(d\varphi_t(x)-\mathcal T_{x,\varphi_t(x)})(x'-x)+V(\varphi_t(x))(t'-t)|
\leq C\delta(|x'-x|+|t'-t|)+C\epsilon.
$$
Now, by~\eqref{e:transverse},
$(d\varphi_t(x)-\mathcal T_{x,\varphi_t(x)})(x'-x)\in E_u(\varphi_t(x))\oplus E_s(\varphi_t(x))$;
since this space is transverse to $V(\varphi_t(x))$, and by Lemma~\ref{l:dyn-1}, we get
$$
|x'-x|+|t'-t|\leq C(|(d\varphi_t(x)-\mathcal T_{x,\varphi_t(x)})(x'-x)|+|t'-t|)\leq C\delta(|x'-x|+|t'-t|)+C\epsilon.
$$
It remains to choose $\delta$ small enough so that $C\delta<1/2$.
\end{proof}
We now give a volume bound on the set of nearly closed trajectories:

\begin{proof}[Proof of Lemma \ref{l:dyn-3}]
First of all, we can replace the range of values of $t$ in~\eqref{e:dyn-3} by
$|t-T|\leq\delta/2$, where $\delta$ is the constant from Lemma~\ref{l:dyn-2}.
(Indeed, we can write $[t_e,T]$ as a union of such intervals.)
Next, let $x_1,\dots,x_N$, with $N$ depending on $T$, be a maximal
set of points in $X$ such that $d(x_j,x_k)\geq\delta e^{-LT}/2$. Since
the metric balls of radius $\delta e^{-LT}/4$ centered at $x_j$
are disjoint, by calculating the volume of their union we find
$N\leq Ce^{nLT}$. Now,
$$
\begin{gathered}
\{(x,t)\mid |t-T|\leq\delta/2,\
d(x,\varphi_t(x))\leq \epsilon\}\subset
\bigcup_{j=1}^N A_j,\\
A_j:=\{(x,t)\mid |t-T|\leq \delta/2,\
d(x,x_j)\leq \delta e^{-LT}/2,\
d(x,\varphi_t(x))\leq\epsilon\}.
\end{gathered}
$$
Take some $j$ such that $A_j$ is nonempty and fix $(x',t')\in A_j$.
Then for each $(x,t)\in A_j$, we have
$|t-t'|\leq\delta$, $d(x,x')\leq\delta e^{-LT}$. By Lemma~\ref{l:dyn-2},
$A_j$ is contained in an $\mathcal O(\epsilon)$ sized tubular neighbourhood of
the trajectory $\{(\varphi_s(x'),t')\mid |s|<1\}$. Therefore, we get
$\tilde\mu(A_j)\leq C\epsilon^n$, finishing the proof.
\end{proof}

\begin{proof}[Proof of Lemma \ref{l:dyn-4}]
Let $\gamma(t)=\varphi_t(x_0)$ be a closed trajectory of period $t_0$. Then
for each $\epsilon>0$, we have by~\eqref{e:flow-dist-bound},
\begin{equation}
  \label{e:dyn-4-int}
d(x,\varphi_t(x))\leq C\epsilon\quad\text{if }
|t-t_0|\leq \epsilon\text{ and }
d(x,\gamma(s))\leq \epsilon e^{-Lt_0}\quad\text{for some }s.
\end{equation}
Moreover, for $t_0\leq T$ and $\epsilon$ small enough depending on $T$,
the tubular neighbourhoods on the right-hand side of~\eqref{e:dyn-4-int} for different closed trajectories
do not intersect. The volume (in $x,t$) of each tubular neighbourhood
is bounded from below by $C^{-1}\epsilon^n e^{-(n-1)Lt_0}$;
it remains to let $\epsilon\to 0$ and apply Lemma~\ref{l:dyn-3}.
\end{proof}

\section{Proof of Guillemin's trace formula}
\label{s:guillemin}

In this appendix, we give a self-contained proof of Guillemin's trace formula~\eqref{eq:guill}
(including the special case~\eqref{eq:ABG})
in the case of Anosov flow $\varphi_t=e^{tV}$ on a compact manifold $X$.
The proof is somewhat simplified by the fact that $E_u(x)\oplus E_s(x)$ is a subbundle
of $TX$ transversal to $\mathbb R V$ and invariant under the flow.

If $\gamma(t)=\varphi_t(x_0)$ is a closed trajectory with period $t_0\neq 0$
(here $t_0$ need not be the \emph{primitive} period), then 
the linearized Poincar\'e map is defined by
\begin{equation}
  \label{e:p-gamma}
\mathcal P_\gamma:=d \varphi_{-t_0}(x_0)|_{E_u(x_0)\oplus E_s(x_0)}.
\end{equation}
Note that $I-\mathcal P_\gamma$ is invertible by~\eqref{e:dyn-0}. The maps
$d\varphi_{-t_0}(\varphi_s(x_0))$ are conjugate to each other by $d\varphi_s(x_0)$ for all
$s$, therefore the expressions $\det(I-P_\gamma)$ and $\tr(\wedge^k P_\gamma)$,
used in~\eqref{eq:guill}, are independent of the choice of the base point on $\gamma$.

Fix a density $dx$ on $X$ and let $K(t,y,x)$ be the Schwartz kernel of
$\varphi_{-t}^*=e^{-itP}$ with respect to this density, that is for $f\in C^\infty(X)$,
\begin{equation}
  \label{e:defK}
f(\varphi_{-t}(y))=\int_X K(t,y,x)f(x)\,dx.
\end{equation}
To be able to define the flat trace of $\varphi_{-t}^*$ as a distribution in $t\in \mathbb R\setminus 0$,
we need to take some $\chi(t)\in C_{\mathrm c}^\infty(\mathbb R\setminus 0)$
and show that the operator
$$
T_\chi:=\int_{\mathbb R} \chi(t)\varphi_{-t}^*\,dt
$$
satisfies the condition~\eqref{e:no-diagonal}, that is $\WF'(T_\chi)$ does not intersect the diagonal.
By the formula for the wave front set of a pushforward~\cite[Theorem~8.2.12]{ho1},
we know that
$$
\WF'(T_\chi)\subset \{(y,\eta,x,-\xi)\mid \exists t\in\supp \chi: (t,0,y,\eta,x,\xi)\in\WF(K)\},
$$
and thus it suffices to show that
\begin{equation}
  \label{e:lullaby}
\WF(K)\cap \{(t,0,x,\xi,x,-\xi)\mid t\neq 0,\ (x,\xi)\in T^*X\setminus 0\}=\emptyset.
\end{equation}
Note that~\eqref{e:lullaby} is exactly the condition under which one can define the pullback
$K(t,x,x)\in\mathcal D'((\mathbb R\setminus 0)\times X)$ of $K$ by the map
$(t,x)\mapsto (t,x,x)$, and
$$
\tr^\flat (T_\chi)=\int_{\mathbb R\times X}\chi(t) K(t,x,x)\,dx dt.
$$
Now, $K(t,y,x)$ is a delta function on the surface $\{y=\varphi_t(x)\}$, therefore
by~\cite[Theorem~8.2.4]{ho1} its wave
front set is contained in the conormal bundle to that surface:
$$
\WF(K)\subset \{(t,-V(x)\cdot\eta,\varphi_t(x),\eta,x,-{}^T\!d\varphi_t(x)\cdot\eta)\mid
t\in\mathbb R,\ x\in X,\ \eta\in T_{\varphi_t(x)}^*X\setminus 0\}.
$$
Then to prove~\eqref{e:lullaby}, we need to show that if
$t\neq 0$, $\varphi_t(x)=x$, $V(x)\cdot\eta=0$, and $(I-{}^T\!d\varphi_t(x))\cdot\eta=0$,
then $\eta=0$; this follows immediately from~\eqref{e:dyn-0}.

The principal component of the proof of the trace formula~\eqref{eq:guill} is the following
\begin{lemm}
  \label{l:guillemin-local}
Let $x_0\in X$ and $t_0\neq 0$ be such that $\varphi_{t_0}(x_0)=x_0$. Then there exists
$\varepsilon>0$ and a neighborhood $U\subset X$ of $x_0$ such that
$\varphi_s(x_0)\in U$ for $|s|<\varepsilon$ and
for each $\chi(t,x)\in C_{\mathrm c}^\infty((t_0-\varepsilon,t_0+\varepsilon)\times U)$,
we have
\begin{equation}
  \label{e:roo}
\int_{\mathbb R\times X} \chi(t,x)K(t,x,x)\,dx
={1\over |\det(I-\mathcal P_\gamma)|}\int_{-\varepsilon}^{\varepsilon}\chi(t_0,\varphi_s(x_0))\,ds,
\end{equation}
where $\mathcal P_\gamma$ is defined in~\eqref{e:p-gamma}.
\end{lemm}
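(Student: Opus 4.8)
\smallsection{Proof strategy for Lemma~\ref{l:guillemin-local}}
The plan is to recognize the distribution $(t,x)\mapsto K(t,x,x)$, localized near $(t_0,x_0)$, as the pullback of the Dirac mass $\delta_0$ at the origin of $\mathbb R^n$ by the map
\[
\Phi(t,x):=x-\varphi_{-t}(x)\in\mathbb R^n
\]
written in a chart, and then to compute that pullback explicitly. Fix a coordinate chart $U\subset X$ about $x_0$, shrunk so that it also contains $\{\varphi_s(x_0):|s|<\varepsilon\}$ and $\{\varphi_{-t}(x):x\in U,\ |t-t_0|<\varepsilon\}$ --- possible since $\varphi_{-t_0}(x_0)=x_0$ and by continuity --- and write the reference density as $dx=\rho(x)\,d^nx$ there. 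The Schwartz kernel of $\varphi_{-t}^*$ is the Dirac mass at $\varphi_{-t}(y)$, i.e.\ $K(t,y,x)=\rho(x)^{-1}\delta(x-\varphi_{-t}(y))$ in the chart; composing the pullbacks and using smoothness of $\rho$ gives, under the wave front condition~\eqref{e:lullaby} already verified above, that $K(t,x,x)=\rho(x)^{-1}\delta(\Phi(t,x))$. The factor $\rho$ cancels against the density $dx$ in~\eqref{e:roo}, so its left-hand side equals $\langle\Phi^*\delta_0,\chi\rangle:=\int\chi(t,x)\,\delta(\Phi(t,x))\,d^nx\,dt$.

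The key geometric point is that $\Phi$ is a submersion near $\Phi^{-1}(0)$ in a neighbourhood of $(t_0,x_0)$. Differentiating,
\[
D\Phi(t_0,x_0)(\dot t,\dot x)=V(x_0)\,\dot t+\big(I-d\varphi_{-t_0}(x_0)\big)\dot x,
\]
and $d\varphi_{-t_0}(x_0)$ fixes $V(x_0)$ and preserves the splitting $T_{x_0}X=E_0(x_0)\oplus E_u(x_0)\oplus E_s(x_0)$, acting on $E_u(x_0)\oplus E_s(x_0)$ as $\mathcal P_\gamma$ with $I-\mathcal P_\gamma$ invertible by~\eqref{e:dyn-0}. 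Hence the image of $D\Phi(t_0,x_0)$ is $\mathbb R V(x_0)+(I-\mathcal P_\gamma)(E_u(x_0)\oplus E_s(x_0))=T_{x_0}X$, and its kernel is exactly $\mathbb R\,(0,V(x_0))$. Since $s\mapsto(t_0,\varphi_s(x_0))$ lies in $\Phi^{-1}(0)$, passes through $(t_0,x_0)$, and has velocity $(0,V(x_0))$ there, it must coincide, after shrinking $U$ and $\varepsilon$, with the entire $1$-dimensional zero set of $\Phi$ near $(t_0,x_0)$; in particular $\Phi$ is submersive on $\Phi^{-1}(0)$ within the domain, so $\Phi^*\delta_0$ is well defined there.

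It remains to evaluate $\langle\Phi^*\delta_0,\chi\rangle$. Parametrize a neighbourhood of this zero curve by $(s,w)\in(-\varepsilon,\varepsilon)\times\mathbb R^n\mapsto\Theta(s,w)$ with $\Theta(s,0)=(t_0,\varphi_s(x_0))$ and the columns of $\partial_w\Theta(s,0)$ spanning any complement to the flow direction; changing variables $(t,x)=\Theta(s,w)$ and using that for each $s$ the map $w\mapsto\Phi(\Theta(s,w))$ is a local diffeomorphism vanishing at $w=0$, with differential $M(s):=D\Phi(t_0,\varphi_s(x_0))\circ\partial_w\Theta(s,0)$, one gets
\[
\langle\Phi^*\delta_0,\chi\rangle=\int_{-\varepsilon}^{\varepsilon}\chi(t_0,\varphi_s(x_0))\,\frac{|\det D\Theta(s,0)|}{|\det M(s)|}\,ds.
\]
This ratio is independent of the chosen transversal frame, since a change of frame multiplies $\det D\Theta(s,0)$ and $\det M(s)$ by the same factor (using that $D\Phi(t_0,\varphi_s(x_0))$ annihilates the flow direction). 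Computing it at $s=0$ with the frame $\{\partial_t\}\cup\{(0,f_i)\}_{i=1}^{n-1}$, where $f_1,\dots,f_{n-1}$ is a basis of $E_u(x_0)\oplus E_s(x_0)$, one factors through $P:=[V(x_0),f_1,\dots,f_{n-1}]$ to find $|\det D\Theta(0,0)|=|\det P|$ and $|\det M(0)|=|\det P|\cdot|\det(I-\mathcal P_\gamma)|$, so the ratio is $1/|\det(I-\mathcal P_\gamma)|$. Finally, $\varphi_s(x_0)$ is again a fixed point of $\varphi_{-t_0}$ with linearized Poincar\'e map conjugate to $\mathcal P_\gamma$, so $|\det(I-\mathcal P_\gamma)|$ is the same at every base point $\varphi_s(x_0)$; inserting this constant value into the integral yields~\eqref{e:roo}. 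The computations are routine once the submersion property is in hand; the only place needing genuine care is the determinant bookkeeping in the last step --- verifying that the Jacobian ratio is independent of the transversal frame and constant along the orbit --- while the conceptual content is simply that invertibility of $I-\mathcal P_\gamma$ forces the set of points near $x_0$ that are almost periodic with period near $t_0$ to reduce to the orbit itself.
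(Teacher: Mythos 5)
Your proof is correct, and it takes a genuinely different route from the paper's. The paper's proof first chooses a ``flow box'' coordinate system $w=\psi(x)$ in which $\psi_*V=\partial_{w_1}$ and $d\psi(x_0)$ maps $E_u(x_0)\oplus E_s(x_0)$ to $\{dw_1=0\}$; in these coordinates $\varphi_{-t}$ becomes an explicit triangular map and the Schwartz kernel $K$ factors as a product $\delta(w'-A(z'))\,\delta(w_1+t-t_0-z_1-F(z'))$, so that the left-hand side of~\eqref{e:roo} is evaluated by two successive one-shot integrations, with the Jacobian $|\det(I-dA(0))|$ appearing immediately and being identified with $|\det(I-\mathcal P_\gamma)|$ by conjugation. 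You instead keep an arbitrary chart, identify the restriction of $K$ to the diagonal with the pullback $\Phi^*\delta_0$ for $\Phi(t,x)=x-\varphi_{-t}(x)$, verify the submersion condition directly from $D\Phi(t_0,x_0)(\dot t,\dot x)=V(x_0)\dot t+(I-d\varphi_{-t_0}(x_0))\dot x$ and the invertibility of $I-\mathcal P_\gamma$, and compute the Leray density on $\Phi^{-1}(0)$ via a parametrization $\Theta$ and the frame-invariant Jacobian ratio $|\det D\Theta|/|\det M|$. Both computations are correct. What the paper's choice of coordinates buys is that the kernel becomes a product of delta functions so no determinant bookkeeping (column operations, frame independence of the ratio, constancy along the orbit) is needed; what your approach buys is that it never uses the flow-box coordinates, makes the role of $I-\mathcal P_\gamma$ as a nondegeneracy/submersion condition explicit, and shows directly why the answer is a Leray-type integral over the closed orbit. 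The determinant manipulations in your last paragraph are exactly the part the paper's coordinates are chosen to avoid, but you carry them out correctly: the block structure of $M(0)$ in the basis $\{V(x_0),f_1,\dots,f_{n-1}\}$ gives $|\det M(0)|=|\det P|\cdot|\det(I-\mathcal P_\gamma)|$, the ratio is frame-independent because $D\Phi$ annihilates the flow direction, and the conjugation $d\varphi_{-t_0}(\varphi_s(x_0))=d\varphi_s(x_0)\,d\varphi_{-t_0}(x_0)\,d\varphi_s(x_0)^{-1}$ shows the ratio is constant in $s$.
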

\begin{proof}
We choose a local coordinate system $w=\psi(x)$,
$\psi:U_1\to B(0,\varepsilon_1)\subset\mathbb R^n$,
where $U_1$ is a neighborhood of $x_0$, such that
$$
\psi(x_0)=0,\quad
\psi_* V=\partial_{w_1},\quad
d\psi(x_0)\big(E_u(x_0)\oplus E_s(x_0)\big)=\{dw_1=0\}.
$$
We next choose small $\varepsilon\in (0,\varepsilon_1)$ such that for $U:=\psi^{-1}(B(0,\varepsilon))$
and $|t-t_0|<\varepsilon$, we have $\varphi_{-t}(U)\subset U_1$. We define the maps
$A:B_{\mathbb R^{n-1}}(0,\varepsilon)\to B_{\mathbb R^{n-1}}(0,\varepsilon_1)$ and
$F:B_{\mathbb R^{n-1}}(0,\varepsilon)\to (-\varepsilon_1,\varepsilon_1)$ by the formulas
$$
\varphi_{-t_0}(\psi^{-1}(0,w'))=\psi^{-1}(F(w'),A(w')),\quad
w'\in\mathbb R^{n-1},\
|w'|<\varepsilon.
$$
Then for $|t-t_0|<\varepsilon$ and $(w_1,w')\in B(0,\varepsilon)$, we have
$$
\varphi_{-t}(\psi^{-1}(w_1,w'))=\psi^{-1}(-t+t_0+w_1+F(w'),A(w')).
$$
Moreover, $F(0)=0$ and $A(0)=0$.

Since the flat trace does not depend on the choice of density on $X$,
we may choose the density $dx$ so that $\psi_* dx$ is the standard density
on $\mathbb R^n$. Then for $|t-t_0|<\varepsilon$ and $(z_1,z'),(w_1,w')\in B(0,\varepsilon)$,
we have
$$
K(t,\psi^{-1}(z_1,z'),\psi^{-1}(w_1,w'))=\delta(w'-A(z'))\delta(w_1+t-t_0-z_1-F(z')).
$$
The left-hand side of~\eqref{e:roo} is
$$
\int_{\mathbb R\times B(0,\varepsilon)} \chi(t,\psi^{-1}(w_1,w'))\delta(w'-A(w')) \delta(t-t_0-F(w'))\,dw_1dw'dt.
$$
Integrating out $t$, we get
$$
\int_{B(0,\varepsilon)} \chi(t_0+F(w'),\psi^{-1}(w_1,w'))\delta(w'-A(w'))\,dw_1dw'.
$$
Now, $dA(0)$ is conjugated by the map $d\psi(x_0)$ to the Poincar\'e map
$\mathcal P_\gamma$, therefore $I-dA(0)$ is invertible and for $\varepsilon$ small enough and
$|w'|<\varepsilon$, the equation $w'=A(w')$ has exactly one root at $w'=0$.
We then integrate out $w'$ to get
$$
{1\over |\det(I-d A( 0 ) )|}\int_{-\varepsilon}^\varepsilon\chi(t_0,\psi^{-1}(w_1,0))\,dw_1
={1\over |\det(I-\mathcal P_\gamma)|}\int_{-\varepsilon}^{\varepsilon}\chi(t_0,\varphi_s(x_0))\,ds,
$$
which finishes the proof.
\end{proof}

By Lemma~\ref{l:guillemin-local} and a partition of unity, we see that for each
$\chi(t,x)\in C_{\mathrm c}^\infty((\mathbb R\setminus 0)\times X)$, we have
\begin{equation}
  \label{e:guillemin-almost}
\int_{\mathbb R\times X}\chi(t,x)K(t,x,x)\,dx=\sum_{\gamma} {1\over |\det(I-\mathcal P_\gamma)|}\int_{\gamma}\chi(T_\gamma,x)\,dL(x)
\end{equation}
where the sum is over all closed trajectories $\gamma$ with period $T_\gamma$
and $dL$ refers to the measure $dt$ on $\gamma(t)=\varphi_t(x_0)$. By taking $\chi(t,x)=\chi(t)$,
we obtain~\eqref{eq:ABG}.

To show the more general~\eqref{eq:guill}, it suffices to prove a
local version similar to \eqref{e:roo}:
\begin{equation}
  \label{e:guillemin-there}
\int_{\mathbb R\times X}\chi(t,x)K^k(t,x,x)\,dx
={\tr(\wedge^k\mathcal P_\gamma)\over |\det(I-\mathcal P_\gamma)|}\int_{-\varepsilon}^\varepsilon \chi(t_0,\varphi_s(x_0))\,ds,
\end{equation}
where $K^k$ is the Schwartz kernel of the operator $\sum_{j=1}^r B_{jj}$,
$r=\dim \mathcal E_0^k$, and $B_{jl}:C_0^\infty(U)\to C^\infty(U)$ are the operators
defined by
$$
\varphi_{-t}^*(f\mathbf e_l)=\sum_{j=1}^r (B_{jl}(t)f)\mathbf e_j,
$$
here $\mathbf e_1,\dots,\mathbf e_r$ is a local frame of $\mathcal E_0^k$ defined near $x_0$.
Define the functions $b_{jl}$ on $(t_0-\varepsilon,t_0+\varepsilon)\times U$ by
$$
\varphi_{-t}^*\mathbf e_l=\sum_{j=1}^r b_{jl}(t)\mathbf e_j.
$$
Then $B_{jl}(t)f=b_{jl}(t)(\varphi_{-t}^*f)$, which means that
$$
K^k(t,x,y)=\sum_{j}b_{jj}(t,y) K(t,x,y),
$$
with $K(t,x,y)$ defined in~\eqref{e:defK}. Then by Lemma~\ref{l:guillemin-local},
$$
\int_{\mathbb R\times X}\chi(t,x)K^k(t,x,x)={1\over |\det(I-\mathcal P_\gamma)|}
\int_{-\varepsilon}^\varepsilon\chi(t_0,\varphi_s(x_0))\sum_j b_{jj}(t_0,\varphi_s(x_0))\,ds.
$$
It remains to note that
$$
\sum_j b_{jj}(t,\varphi_s(x_0))=
\tr\wedge^k ({}^T d\varphi_{-t_0}(x_0)|_{E_s^*(x_0)\oplus E_u^*(x_0)})=
\tr\wedge^k \mathcal P_\gamma.
$$

\section{Review of microlocal and semiclassical analysis}
\label{a:wf}

In this Appendix, we provide details and references for the concepts and facts
listed in~\S\ref{wfs}. All the proofs 
are essentially well known but we include them for the reader's
convenience. 

In standard microlocal analysis the asymptotic parameter is given by 
$ | \xi |$, where $ \xi $ is fiber variable (here the norm is
with respect to some smooth metric on the compact manifold  $ X$). 
We start our presentation with the review of that theory. In the 
semiclassical setting a small parameter $ h $ is added to 
measure the wave length of oscillations. We are 
then concerned in asymptotics as both $ h \to 0 $ and $ \xi \to 0 $.
That is one reason for which the fiber compactification is useful
as that provides a uniform setting for such 
asymptotics. In specific applications the
operators depend on additional parameters, in our case the 
spectral parameter $ \lambda$ or its rescaled version $ z = h \lambda $. 
If the classical objects (symbols) satisfy uniform estimates with 
respect to the parameters, so do their quantizations (operators),
as do the derivatives in $\lambda$.
That is implicit in many statements but is not stated in order
not to clutter the already complicated notation.

\subsection{Microlocal calculus}
  \label{a:wf-1}

Let $X$ be a manifold with a fixed volume form. We use the algebra
of pseudodifferential operators $\Psi^k(X)$, $k\in\mathbb R$, with symbols
lying in the class $S^k(X)\subset C^\infty(T^*X)$:
\begin{equation}
  \label{e:symbols}
a\in S^k(X)\ \Longleftrightarrow\ \sup_{x\in K}\langle\xi\rangle^{|\beta|-k}|\partial^\alpha_x\partial^\beta_{\xi}a(x,\xi)|\leq C_{\alpha\beta K},\quad
K\Subset X.
\end{equation}
See for example~\cite[\S18.1]{ho3} for the basic properties of operators in $\Psi^k$.
In particular, each $A\in \Psi^k(X)$ is bounded between Sobolev spaces
$H^m_{\comp}(X)\to H^{m-k}_{\loc}(X)$, or simply $H^m(X)\to H^{m-k}(X)$ if $X$ is compact.
The wave front set $\WF(A)$ of $A\in\Psi^k(X)$ is a closed conic subset of $T^*X\setminus 0$,
with $0$ denoting the zero section; the complement of $\WF(A)$
consists of points in whose conic neighbourhoods the full symbol of $A$ is
$\mathcal O(\langle\xi\rangle^{-\infty})$, see the discussion following~\cite[Proposition~18.1.26]{ho3}.

The wave front set $\WF(u)\subset T^*X\setminus 0$
of a distribution $u\in \mathcal D'(X)$ is defined
as follows: a point $(x,\xi)\in T^*X\setminus 0$
does not lie in $\WF(u)$ if there exists a conic neighbourhood $U$ of $(x,\xi)$ such that
$Au\in C^\infty(X)$ for each $A\in\Psi^0(X)$ with $\WF(A)\subset U$~-- see~\cite[(18.1.35) and Theorem~18.1.27]{ho3}.
An equivalent definition (see~\cite[Definition~8.1.2]{ho1}) is given in terms of the Fourier transform:
$(x,\xi)\not\in\WF(u)$ if and only if there exists $\chi\in C_{\rm{c}}^\infty(X)$ with
$\supp\chi$ contained in some coordinate neighbourhood and $\chi(x)\neq 0$
such that $\widehat{\chi u}(\xi')=\mathcal O(\langle\xi'\rangle^{-\infty})$
for $\xi'$ in a conic neighbourhood of $\xi$; here $\chi u$ is considered a function
on $\mathbb R^n$ using some coordinate system and $\xi$ is accordingly considered
as vector in $\mathbb R^n$.

The wave front set $\WF'(B)\subset T^*(Y\times X)$
of an operator $B:C_{\rm{c}}^\infty(X)\to \mathcal D'(Y)$ is defined using its
Schwartz kernel $K_B(y,x)\in \mathcal D'(Y\times X)$:
\begin{equation}
  \label{e:wf'}
\WF'(B):=\{(y,\eta,x,-\xi)\mid (y,\eta,x,\xi)\in\WF(K_B)\}.
\end{equation}
Here we use the fixed smooth density on $X$ to define the Schwartz kernel as
a distribution on $Y\times X$; however, this choice does not affect the
wave front set.
If $B\in\Psi^k(X)$, then the set defined in~\eqref{e:wf'} is the image
of the wave front set $\WF(B)\subset T^*X$ of $B$ as a pseudodifferential operator under the
diagonal embedding $T^*X\to T^*(X\times X)$, see~\cite[(18.1.34)]{ho3}.

The concept of the wave front plays a crucial role in the definition
of the flat trace. 
Before proving Lemma \ref{l:appr1} we give
\begin{proof}[Proof of \eqref{eq:eeps1}]
We first show that $ E_\epsilon \in \Psi^{0+} ( X ) $ with seminorm
estimates independent of $ \epsilon $.  For that we use Melrose's
characterization of pseudodifferential operators
\cite[\S18.2]{ho3}: it is enough to show that for any set of vector fields $ V_j
\in C^\infty ( X \times X; T ( X \times X ) )$ tangent to the
  diagonal, we have $ V_1 \cdots V_N K_{ E_\epsilon } \in H^{-n/2-} ( X \times
X ) $
with norm bounded uniformly in $\epsilon$.
This can be done in local coordinates, writing
$\psi(d(x,y)/\epsilon)=\Psi(x,(x-y)/\epsilon,\epsilon)$, where
$\Psi$ is a smooth function on $\mathbb R^n\times\RR^n\times [0,\infty)$,
compactly supported in the second argument.
We have $ F_\epsilon ( x ) = \int_{ \mathbb R^n } \Psi (x,(x-y)/\epsilon,\epsilon) J(y)\,dy$,
where $J$ is the Jacobian, and the support of the integrand lies $\mathcal O(\epsilon)$ close to $x$.
Then $  \partial_x^\alpha  F_\epsilon ( x )=
\mathcal O_\alpha ( \epsilon^n ) $; indeed, one can rewrite the $x$ derivatives
falling on the second argument of $\Psi$ as derivatives in $y$ and integrate by parts.
This implies that
$ \partial_x^\alpha  ( 1/ F_\epsilon ( x ) ) = \mathcal O_\alpha (
\epsilon^{-n} ) $. Locally, vector fields tangent to the diagonal
are generated by $ \partial_{x_j} + \partial_{y_j } $ and $ ( x_j -
y_j) \partial_{x_k} $ and we see that they preserve the class
of smooth functions of $x,(x-y)/\epsilon,\epsilon$. Therefore,
for $ 
|\alpha| = |\beta| $,
\[ ( x - y )^\alpha \partial_x^{\beta } ( \partial_x + \partial_y
)^\gamma K_{ E_\epsilon } ( x, y )  =  \epsilon^{-n} F_{\alpha \beta
  \gamma}  ( x, (x-y)/\epsilon, \epsilon ) , \]
where $ F_{\alpha \beta \gamma} \in C^\infty ( \RR^{2n}\times [0,\infty) ) $ are smooth functions.
The right hand side is in $ H^{-n/2-} ( \RR^{2n} ) $ uniformly in $\epsilon$ which
proves the claim.  To obtain\footnote{This specific statement is
not used in the paper: all we need is $ E_\epsilon \varphi \to \varphi
$ in $ C^\infty $ for $ \varphi \in C^\infty ( X) $, and that $ E_\epsilon $ is 
uniformly bounded in {\em some} $ \Psi^k ( X ) $.}
$ E_\epsilon \to I $ in  $ \Psi^{0+} ( X ) $ we apply the 
same argument to $ K_{E_\epsilon } - K_{I} $.
\end{proof}

\begin{proof}[Proof of Lemma~\ref{l:appr1}]
Let $ \Delta ( X ) = \{ ( x, x ) \} \subset X
\times X $ and let $ \Gamma $ be the complement of a small conic neighbourhood
of the conormal bundle $ N^*  \Delta ( X )  \subset T^* (X \times X ) $. 
Since $ \WF ( K_B  ) \cap N^*  \Delta ( X )  = \emptyset $ by~\eqref{e:no-diagonal} we
can choose $ \Gamma $ so that $ \WF ( K_B ) \subset \Gamma $.
This means that $ K_B \in {\mathcal  D}'_\Gamma ( X \times X ) $ where the last space consists of 
all distributions $ u \in {\mathcal D}' ( X \times X ) $ with  $
\WF ( u ) \subset \Gamma $.
If we write $ B_\epsilon := E_\epsilon B E_\epsilon $ then 
$ B_\epsilon : {\mathcal D}' ( X ) \to C^\infty ( X  ) $, and hence
$ K_{B_\epsilon } \in C^\infty ( X \times X ) $, 
\begin{equation}
  \label{e:get-trace-back}
\tr B_\epsilon = \int_X K_{B_\epsilon} (x, x ) dx = \int_X \iota^* K_{B_\epsilon}\,dx.
\end{equation}
Since $ E_\epsilon \to I $ in $ \Psi^{0+} $, $ E_\epsilon \varphi
\to \varphi $ in $ C^\infty  ( X )  $ for $ \varphi \in C^\infty ( X )
$. Hence  $ K_{B_\epsilon } ( \varphi_1 \otimes \varphi_2 )  \to K_B ( \varphi_1
\otimes \varphi_2 ) $, $ \varphi_j \in C^\infty ( X ) $, and 
consequently
$ K_{B_\epsilon}  \to K_B$ in $ {\mathcal D}' ( X \times X ) $. 
To show that  $ K_{B_\epsilon}  \to K_ B$ in $ {\mathcal D}'_\Gamma ( X
\times X ) $,  we adapt \cite[Definition 8.2.2]{ho1} and it
suffices to show that for each $ A \in \Psi^0 ( X \times X ) $
with $ \WF ( A ) \cap \Gamma = \emptyset$, $ A  K_{B_\epsilon}  $ is bounded in $C^\infty(X\times X)$ uniformly in $ \epsilon
$.
In fact,  
\[ A  K_{ B_\epsilon } =  A
E_{\epsilon,x}^t E_{\epsilon,y}  K_B  , \]
where $E_{\epsilon,x}$ and $E_{\epsilon,y}$ denote the operator
$E_\epsilon$ acting on $x$ and $y$ variables in $X\times X$, and the superscript
$t$ denotes the transpose.
Since $ E_\epsilon $ is uniformly bounded in 
$ \Psi^{0+} ( X ) $  and $ \WF ( A) $ is contained in a small 
neighbourhood of $ N^* \Delta ( X  ) $, 
$C_\epsilon  := A 
E_{\epsilon,x}^tE_{\epsilon,y}$
is in $ \Psi^{0+} ( X \times X ) $ with seminorms uniformly bounded with
respect to $ \epsilon $, and with $ \WF ( C_\epsilon ) \cap \Gamma =
\emptyset$.%
\footnote{The slight subtlety here lies in the fact that $
E_{\epsilon,x},E_{\epsilon,y}$ are {\em not } pseudifferential operators on $
X \times X $. However, the localization to a region where $ |\xi|  $ and $ |
\eta |$ are comparable makes the composition into a pseudodifferential operator.}
Hence $ C_\epsilon K_B \in C^\infty ( X \times X ) $ uniformly in $
\epsilon $ and thus $K_{B_\epsilon}\to K_B$ in $\mathcal D'_\Gamma(X\times X)$.
We now invoke \cite[Theorem 8.2.4]{ho1} to conclude that 
$ \iota^* K_{B_\epsilon } \to \iota^* K_B $ in $ \mathcal D' ( X ) $.
Hence $ \int_X \iota^* K_{B_\epsilon }\,dx \to \int_X \iota^* K_B\,dx $ as $ \epsilon\to 0 $, 
proving the lemma.
\end{proof}

If $\mathcal E$ is a smooth $r$-dimensional vector bundle over $X$ (see for example~\cite[Definition~6.4.2]{ho1}),
then we can consider distributions $\mathbf u\in \mathcal D'(X;\mathcal E)$ with values
in $\mathcal E$. The wave front set $\WF(\mathbf u)$, a closed conic subset
of $T^*X\setminus 0$, is defined as follows: $(x,\xi)\not\in\WF(\mathbf u)$ if and only
if for each local basis $\mathbf e_1,\dots,\mathbf e_r\in C^\infty(U;\mathcal E)$ of $\mathcal E$
defined in a neighbourhood $U$ of $x$, and for $\mathbf u|_U=\sum_{j=1}^r u_j\mathbf e_j$,
$u_j\in\mathcal D'(U)$, we have $(x,\xi)\not\in\WF(u_j)$ for all $j$. Similarly,
one can define $\WF'(\mathbf B)$ for an operator $\mathbf B$ with values in some smooth vector bundle
over $Y\times X$.

An operator $\mathbf A:\mathcal D'(X;\mathcal E)\to \mathcal D'(X;\mathcal E)$ is said to be pseudodifferential in the class
$\Psi^k(X)$, denoted $\mathbf A\in\Psi^k(X;\Hom(\mathcal E
))$, if
$\WF(\mathbf A\mathbf u)\subset \WF(\mathbf u)$ for all $u\in\mathcal D'(X;\mathcal E)$
and, for each local basis $\mathbf e_1,\dots,\mathbf e_r\in C^\infty(U;\mathcal E)$
over some open $U\subset X$, we have on $U$,
$$
\mathbf A (f\mathbf e_l)=\sum_{j=1}^r (A_{jl}f)\mathbf e_j,\quad\text{for each }
f\in\mathcal D'(X;\mathcal E),\
\supp f\Subset U,
$$
where $A_{jk}\in\Psi^k(U)$. As before, the wave front set $\WF(\mathbf A)$ on $U$
is defined as the union of $\WF(A_{jl})$ over all $j,l$. The principal symbol
$$
\sigma(\mathbf A)\in S^k(X;\Hom(\mathcal E 
))/S^{k-1}(X;\Hom(\mathcal E
))
$$
is defined using the standard notion of the principal symbol $\sigma(A_{jl})\in S^k(X)/S^{k-1}(X)$
(see the discussion following~\cite[Definition~18.1.20]{ho3}) as follows:
$$
\sigma(\mathbf A)\mathbf e_l=\sum_{j=1}^r \sigma(A_{jl})\mathbf e_j\quad\text{on }U.
$$
The operator $\mathbf A$ is called \emph{elliptic} in the class $\Psi^k$
at some point $(x,\xi)\in T^*X\setminus 0$, if $\langle\xi'\rangle^{-k}\sigma(\mathbf A)(x',\xi')$ is invertible
(as a homomorphism $\mathcal E\to\mathcal E$) uniformly as $\xi'\to\infty$ for $(x',\xi')$
in a conic neighbourhood of $(x,\xi)$; equivalently, $|\det(\langle\xi'\rangle^{-k}\sigma(\mathbf A))|\geq c>0$
in a conic neighbourhood of $(x,\xi)$. The (open conic) set of all elliptic points of $\mathbf A$
is denoted $\Ell(\mathbf A)$.


\subsection{Semiclassical calculus}
  \label{a:wf-2}

We now introduce the algebra $\Psi^k_h(X)$ of \emph{semiclassical} pseudodifferential
operators, depending on a parameter $h>0$ tending to zero~\cite[\S14.2]{ev-zw}.
The corresponding symbols $a(x,\xi;h)$ (denoted $a\in S^k_h(X)$)
satisfy $a(\cdot,\cdot;h)\in S^k(X)$ uniformly in
$h$ as $h\to 0$, with the class $S^k$ defined in~\eqref{e:symbols}.
Each $A\in\Psi^k_h(X)$ has a semiclassical wave front set
$\WFh(A)$, a closed (and not necessarily conic) subset of the
fiber-radially compactified cotangent bundle $\overline T^*X$
(see~\cite[\S2.1]{vasy1});
a point $(x,\xi)\in \overline T^*X$ does not lie in $\WFh(A)$ if and only if
the full symbol $a$ of $A$ satisfies $a(x',\xi')=\mathcal O(h^\infty\langle\xi'\rangle^{-\infty})$
for $h$ small enough and $(x',\xi')\in T^*X$ in a neighbourhood of $(x,\xi)$ in $\overline T^*X$.
The elements of $\Psi^k_h(X)$ act between semiclassical Sobolev
spaces $H^m_{h,\comp}(X)\to H^{m-k}_{h,\loc}(X)$ with norm $\mathcal O(1)$,
see~\cite[\S14.2.4]{ev-zw}.

Using operators in $\Psi^k_h(X)$, we define the semiclassical wave front set
$\WFh(u)\subset\overline T^*X$ for an $h$-tempered family of
distributions $u=u(h)$, see for example~\cite[\S8.4.2]{ev-zw},
\cite[\S3.1]{fwl}.
Similarly to $\WF(u)$, the set $\WFh(u)$ can be characterized using the Fourier transform as follows: $(x,\xi)\not\in\WFh(u)$
if and only if there exists $\chi\in C_{\rm{c}}^\infty(X)$ supported in some coordinate neighbourhood,
with $\chi(x)\neq 0$, and a neighbourhood $U_\xi$ of $\xi$ in $\overline T^*X$, such that
$ {\mathcal F}_h ( \chi u ) ( \xi') := \widehat{\chi u}(\xi'/h)=\mathcal O(h^\infty\langle\xi'\rangle^{-\infty})$
for $\xi'\in U_\xi$. This characterization immediately implies~\eqref{e:wf-wf-h}.
Similarly, one can define the wave front set $\WFh'(B)\subset\overline T^*(Y\times X)$
of an $h$-tempered family of operators $B(h):C_{\rm{c}}^\infty(X)\to \mathcal D'(Y)$.

The semiclassical principal symbol of $A\in \Psi^k_h(X)$, denoted
$\sigma_h(A)$, lies in the space $S^k_h(X)/hS^{k-1}_h(X)$~--
see~\cite[Theorem~14.1]{ev-zw}.
Note that this encodes the behaviour of the full symbol of $A$ at $h=0$ everywhere on $\overline T^*X$,
as well as the behaviour at the fiber infinity $\partial\overline T^*X$ for small, but positive, values
of $h$~-- see~\cite[\S2.1]{vasy1}. We cannot use the more convenient space of classical operators,
whose principal symbol is just a function on $T^*X$ (see~\cite[\S3.1]{fwl}) because the symbol of the operator
$ e^{ s G(h) } \mathbf P e^{-s G(h) } $ (see \S\ref{micro}) 
has the form $p+ishH_pG$, with $p\in S^1(X)$ and $H_pG=\mathcal O(\log(2+|\xi|))$ narrowly
missing the class $S^0(X)$. The (open) elliptic set $\Ell_h(A)\subset \overline T^*X$ is defined
as follows: $(x,\xi)\in\Ell_h(A)$ if $\langle\xi'\rangle^{-k}|\sigma_h(A)(x',\xi';h)|\geq c>0$
for $h$ small enough and all $(x',\xi')\in T^*X$ in a neighbourhood of $(x,\xi)$ in $\overline T^*X$.
Similarly to~\S\ref{a:wf-1}, we can study operators and distributions with values in smooth vector
bundles over $X$.

\begin{proof}[Proof of Lemma \ref{l:wfs}]
Using local coordinates, we reduce to the case $X=\mathbb R^n$, $Y=\mathbb R^m$.
Assume first that there exist neighbourhoods $U,V$ such that~\eqref{e:wf-char-op}
holds. Take $\chi_x\in C_{\rm{c}}^\infty(X),\chi_y\in C_{\rm{c}}^\infty(Y)$ with
$\chi_x(x)\neq 0,\chi_y(y)\neq 0$, and neighbourhoods $U_\xi,V_\eta$ of $\xi,\eta$,
such that
$
\supp\chi_x\times U_\xi\subset U,\quad \supp\chi_y\times V_\eta\subset V
$.

Let $K'_B(y',x')=\chi_y(y')K_B(y',x')\chi_x(x')$, and take arbitrary $\xi'\in U_\xi,\eta'\in V_\eta$
(depending on $h$). Then
$$
\mathcal F_h  {K'_B}(\eta',-\xi')= \mathcal F_h (\chi_yBf)(\eta'),\quad
f(x'):=\chi_x(x')e^{ix'\cdot\xi'/h}.
$$
where $ {\mathcal F_h } $ denotes the semiclassical Fourier transform
\cite[\S3.3]{ev-zw}.
We have $\WFh(f)\subset U$ (see~\cite[(8.4.7)]{ev-zw}) and thus by~\eqref{e:wf-char-op},
$\WFh(Bf)\cap V=\emptyset$. It follows that $\WFh(\chi_y Bf)\cap (\RR^n
\times V_\eta ) =\emptyset$
and thus by the semiclassical analog of~\cite[Proposition~8.1.3]{ho1},
 $ \mathcal F_h (\chi_yBf)(\eta')=\mathcal O(h^\infty)$ for $\eta'\in V_\eta$,
yielding, by the characterization of $\WFh$ via the Fourier transform, $(y,\eta,x,\xi)\not\in\WFh'(B)$.

Now, assume that $(y,\eta,x,\xi)\not\in\WFh'(B)$. Take $\chi_x\in C_{\rm{c}}^\infty(X),\chi_y\in C_{\rm{c}}^\infty(Y)$
such that $\chi_x=1$ on a neighbourhood $U_x$ of $x$,
$\chi_y=1$ on a neighbourhood $V_y$ of $y$, and neighbourhoods $U_\xi,V_\eta$ of $\xi,\eta$, such that
\begin{equation}
  \label{e:charles}
(\supp\chi_y\times \overline V_\eta\times\supp\chi_x\times \overline U_\xi)\cap\WFh'(B)=\emptyset.
\end{equation}
Put $U:=U_x\times U_\xi$, $ V:=V_y\times V_\eta$,
and assume that $f$ is an $h$-tempered family of distributions on $X$ such that
$\WFh(f)\subset U$. By Fourier inversion formula together with the characterization of $ \WFh $ via the Fourier transform,
\[ \begin{split}
f(x') & =\chi_x(x')(2\pi h)^{-n}\int_{U_\xi} e^{ix'\cdot\xi'/h} {\mathcal F}_h
f(\xi' )\,d\xi'+ ( 1 - \chi_x ( x')  ) f( x' ) \\
& \ \ \ \ \ \ \ \ \ 
 + \chi_x ( x' ) ( 2\pi h )^{-n} \int_{ \RR^n \setminus U_{\xi } }  e^{ix'\cdot\xi'/h} {\mathcal F}_h
f(\xi' )\,d\xi'  \\
& = (2\pi h)^{-n}\int_{U_\xi} \chi_x(x')e^{ix'\cdot\xi'/h} {\mathcal F}_h
f(\xi' )\,d\xi  + {\mathcal O} ( h^\infty )_{ C_{\rm{c}}^\infty } . 
\end{split}
\]
Therefore, if $K'_B(y',x')=\chi_y(y')K_B(y',x')\chi_x(x')$, then for bounded $\eta'$,
$$
\mathcal F_h (\chi_y Bf)(\eta')=(2\pi h)^{-n}\int_{U_\xi} \mathcal F_h
{K'_B}(\eta',-\xi')
\mathcal F_h  f(\xi')\,d\xi'+\mathcal O(h^\infty)_{\mathscr S(\mathbb R^m)}.
$$
However, we have by~\eqref{e:charles},
$ \mathcal F_h  {K'_B}(\eta',-\xi')=\mathcal O(h^\infty)$
for $(\eta',\xi')\in V_\eta\times U_\xi$; therefore,
$ \mathcal F_h (\chi_y Bf)(\eta')=\mathcal O(h^\infty)$ for $\eta'\in V_\eta$,
implying that $\WFh(Bf)\cap V=\emptyset.$
\end{proof}

\subsection{Proofs of semiclassical estimates}
  \label{a:wf-3}

In this subsection, we denote by boldface letters distributions with
values in $\mathcal E$ or operators acting on such distributions, and
with regular letters, scalar distributions and operators. Note that
any $A\in\Psi^k_h(X)$ can be viewed as an element of
$\Psi^k_h(X;\Hom(\mathcal E
))$ via the diagonal action.
\begin{proof}[Proof of Proposition~\ref{l:elliptic}]
Part~2 follows immediately from part~1 and the definition of $\WFh$. Indeed,
assume that $(x,\xi)\in \Ell_h(\mathbf P)\setminus\WFh(\mathbf P\mathbf u)$; it suffices
to prove that $(x,\xi)\not\in\WFh(\mathbf u)$. Take a neighbourhood $U$ of $(x,\xi)$
such that $U\Subset \Ell_h(\mathbf P)\setminus\WFh(\mathbf P\mathbf u)$,
and choose $B\in\Psi^0_h(X)$ such that
$U\subset\Ell_h(B)$ and $\WFh(B)\cap \WFh(\mathbf P\mathbf u)=\emptyset$.
Then $B\mathbf P$ is elliptic on $U$ and
$\|B\mathbf P\mathbf u\|_{H^{m-k}_h}=\mathcal O(h^\infty)$ for all $m$; by part~1, applied to
the operator $B\mathbf P$ in place of $\mathbf P$, we get
$\|A\mathbf u\|_{H^m_h}=\mathcal O(h^\infty)$ for all $m$ and
all $A\in\Psi^0_h(X)$ such that $\WFh(A)\subset U$, as required.

It remains to prove part~1.
Similarly to the proof of~\cite[Theorem~18.1.9]{ho3} (reducing to local frames of $\mathcal E$
and either using Cramer's rule or repeatedly differentiating the
equation $\sigma_h(\mathbf P)^{-1}\sigma_h(\mathbf P)=1$), we see that
the inverse $\sigma_h(\mathbf P)^{-1}$ of $\sigma_h(\mathbf P)$ in
$C^\infty(X;\Hom(\mathcal E
))$ is well-defined and lies in $S^{-k}_h(X;\Hom(\mathcal E
))$
for $h$ small enough and $(x,\xi)\in \Ell(\mathbf P)$. Using a cutoff function in $\overline T^*X$,
we can then construct $\mathbf q\in S^{-k}_h(X;\Hom(\mathcal
E
))$ such that
$\mathbf q=\sigma_h(\mathbf P)^{-1}$ near $\WFh(A)$. Take $\mathbf
Q_0\in\Psi^{-k}_h(X;\Hom(\mathcal E
))$
such that $\sigma_h(\mathbf Q_0)=\mathbf q$, then $\mathbf Q_0\mathbf P=1-h\mathbf R$ microlocally
near $\WFh(A)$, where $\mathbf R\in \Psi^{-1}_h(X;\Hom(\mathcal
E
))$. Using
asymptotic Neumann series exactly as in the proof of~\cite[Theorem~18.1.9]{ho3} to invert $1-h\mathbf R$, we construct
$\mathbf Q\in\Psi^{-k}_h(X;\Hom(\mathcal E
))$ such that
$$
\mathbf Q\mathbf P=1+\mathcal O(h^\infty)_{\Psi^{-\infty}}\quad\text{microlocally near }
\WFh(A).
$$
Then $A\mathbf u=A\mathbf Q\mathbf P\mathbf u+\mathcal O(h^\infty)_{C^\infty}$,
implying~\eqref{e:elliptic-est}.
\end{proof}
%
\begin{proof}[Proof of Proposition~\ref{l:hyperbolic}]
Similarly to Proposition~\ref{l:elliptic}, it is enough to prove part~1. Moreover,
by a partition of unity, we may assume that $\WFh(A)$ is contained
in a small neighbourhood of some fixed $(x_0,\xi_0)\in\overline T^*X$.
Let $\gamma(t)=\exp(tH_p)(x_0,\xi_0)$ and take $T\geq 0$ such that
$\gamma(-T)\in\Ell_h(B)$; we may then assume that
\begin{equation}
  \label{e:escape-prelude}
e^{-TH_p}(\WFh(A))\subset\Ell_h(B),\quad
e^{tH_p}(\WFh(A))\subset \Ell_h(B_1)\quad\text{for }t\in [-T,0].
\end{equation}
It is enough to prove the estimate
\begin{equation}
  \label{e:hyperbolic-int}
\|A\mathbf u\|_{H^m_h}\leq C\|B\mathbf u\|_{H^m_h}+Ch^{-1}\|B_1\mathbf P\mathbf u\|_{H^m_h}+\mathcal O(h^{1/2})\|B_1\mathbf u\|_{H_h^{m-1/2}}
+\mathcal O(h^\infty).
\end{equation}
Indeed, without loss of generality we may assume that each for each $(x,\xi)\in\WFh(B_1)$,
there exists $t\in [-T,0]$ such that $e^{tH_p}(x,\xi)\in \WFh(B)$;
one can then apply~\eqref{e:hyperbolic-int} with $A$ replaced by $B_1$ and
replace $\mathcal O(h^{1/2})\|B_1\mathbf u\|_{H_h^{m-1/2}}$ by $\mathcal O(h)\|B_2\mathbf u\|_{H_h^{m-1}}$
for certain $B_2\in\Psi^0_h$ microlocalized near $\gamma([-T,0])$; repeating this process,
and recalling that $\mathbf u$ is $h$-tempered,
we can ultimately make this term $\mathcal O(h^\infty)$.

In addition to a smooth density on $X$, we fix a smooth inner product on
the fibers of $\mathcal E$; this defines a Hilbert inner product $\langle\cdot,\cdot\rangle$
on $L^2(X;\mathcal E)$. We denote
$$
\Re \mathbf P={\mathbf P+\mathbf P^*\over 2},\quad
\Im \mathbf P={\mathbf P-\mathbf P^*\over 2i},
$$
so that $\Re\mathbf P,\Im\mathbf P\in\Psi^1_h(X;\Hom(\mathcal
E
))$ are symmetric and
$\mathbf P=\Re\mathbf P+i\Im\mathbf P$.


We will use an \emph{escape function}
$f(x,\xi)\in C^\infty(\overline T^*X)$, such that
$\supp f\subset\Ell_h(B_1)$ and
\begin{align}
\label{e:escape-1}
f\geq 0&\quad\text{everywhere};\\
\label{e:escape-2}
f>0&\quad\text{near }\WFh(A);\\
\label{e:escape-3}
H_p f\leq -C_0f&\quad\text{outside of }\Ell_h(B).
\end{align}
Here $C_0>0$ is a large constant to be chosen later.
To construct such $f$, we use~\eqref{e:escape-prelude} and identify
a tubular neighbourhood of $\gamma([-T,0])$ contained in $\Ell_h(B_1)$ with
$$
\{|\theta|<\delta\}\times (-T-\delta,\delta)_\tau\subset \mathbb R^{2n-1}_\theta\times\mathbb R_\tau,
$$
for small $\delta>0$, so that $H_p$ is mapped to $\partial_\tau$. We then put
$f(\theta,\tau)=\chi(\theta)\psi(\tau)$, where $\chi\in C_{\mathrm{c}}^\infty(\{|\theta|<\delta\};[0,1])$
satisfies $\chi=1$ on $\{|\theta|\leq\delta/2\}$, and
$\psi\in C_{\mathrm{c}}^\infty(-T-\delta,\delta)$ satisfies $\psi\geq 0$ everywhere,
$\psi(0)>0$, and
$\psi'\leq -C_0\psi$ outside of $(-T-\delta,-T+\delta)$. 
(To construct $ \psi $ we first choose $ \psi_0 \in
C_{\mathrm{c}}^\infty(-T-\delta,\delta)$ such that $ \psi_0 \geq 0 $,
$ \psi_0 ( 0 ) = 1 $, and $ \psi' \leq 0 $ on $ ( - T + \delta ,
\delta ) $.  We then put $ \psi ( \tau ) := e^{ - C_0 \tau } \psi_0
(\tau ) $.)


We now prove~\eqref{e:hyperbolic-int} by a positive commutator argument, going back to~\cite{ho-pc}.
Because $\WFh(A)$ might intersect the fiber infinity $\partial\overline T^*X$,
we have to put in regularizing pseudodifferential operators.
Assume that $S_\epsilon\in\Psi^{m-1}_h$, $\epsilon\in (0,1)$,
quantizes the symbol
$
\sigma_h(S_\epsilon) :=\langle\xi\rangle^m\langle\epsilon\xi\rangle^{-1}.
$
Note that $S_\epsilon$ is bounded uniformly in
$\Psi^{m}_h$ for $ \epsilon > 0 $. Take $F\in\Psi^0_h$ such that $\sigma_h(F)=f$
and $\WFh(F)\subset \Ell_h(B_1)$, and put $F_\epsilon=S_\epsilon F\in\Psi^{m-1}_h$, so that
$
\sigma_h(F_\epsilon)=f_\epsilon:=\langle\xi\rangle^m\langle\epsilon\xi\rangle^{-1}f
$.
Assume that $B_1\mathbf u\in H^{m-1/2}_h(X;\mathcal E)$. For each $\epsilon>0$
\begin{equation}
  \label{e:hyperbolic-commutator}
\Im\langle\mathbf P\mathbf u,F^*_\epsilon F_\epsilon\mathbf u\rangle
={i\over 2}\langle [\Re \mathbf P,F_\epsilon^*F_\epsilon]\mathbf u,\mathbf u\rangle
+{1\over 2}\langle (F_\epsilon^*F_\epsilon\Im\mathbf P+(\Im\mathbf P)F_\epsilon^*F_\epsilon)\mathbf u,\mathbf u\rangle,
\end{equation}
where the product on the left-hand side makes sense because
$B_1\mathbf P\mathbf u\in H^m_h \subset H^{m-3/2}_h$,
$\WFh(F_\epsilon)\subset\Ell_h(B_1)$ and $F^*_\epsilon F_\epsilon\mathbf u\in H^{-m+3/2}_h$.

We now estimate the terms on the right-hand side of~\eqref{e:hyperbolic-commutator}.
Denote
\begin{equation}
\label{eq:Tep}
\mathbf T_\epsilon:= {i\over 2h}[\Re\mathbf
P,F_\epsilon^*F_\epsilon]\in \Psi_h^{2m-2}(X;\Hom(\mathcal E
)),
\end{equation}
which is bounded in $\Psi_h^{2m}$, uniformly in $\epsilon$.
The principal symbol of $\mathbf T_\epsilon$ in $\Psi^{2m}_h$ is independent
of $h$ and diagonal with entries
\begin{equation}
  \label{e:f-eps-der}
f_\epsilon \,H_p f_\epsilon=
\langle\xi\rangle^m\langle\epsilon\xi\rangle^{-1}f_\epsilon\, H_pf
+f_\epsilon^2\bigg({m\over 2} \langle\xi\rangle^{-2}
-{\epsilon^2\over 2}\langle\epsilon\xi\rangle^{-2}\bigg)H_p(|\xi|^2).
\end{equation}
Since $H_p(|\xi|^2)=\mathcal O(|\xi|^2)$, we get
$$
\bigg({m\over 2} \langle\xi\rangle^{-2}
-{\epsilon^2\over 2}\langle\epsilon\xi\rangle^{-2}\bigg)H_p(|\xi|^2)=\mathcal O(1),
$$
uniformly in $\epsilon,\xi$. Therefore, for $C_0$ large enough depending on $m$,
and some large constant $C$, \eqref{e:escape-3} implies that
$$
f_\epsilon\,H_pf_\epsilon+{C_0\over 2} f_\epsilon^2\leq C|\langle\xi\rangle^m\sigma_h(B)|^2.
$$
The sharp G\r arding inequality~\cite[Theorem~9.11]{ev-zw} applied
to the operator
$\mathbf T_\epsilon+{C_0\over 2}F_\epsilon^*F_\epsilon-C(S_0B)^*(S_0B)$,
where $\sigma_h(S_0)=\langle\xi\rangle^{m}$,
gives, uniformly in $\epsilon$,
\begin{equation}
  \label{e:hyperbolic-commutator-1}
\langle\mathbf T_\epsilon\mathbf u,\mathbf u\rangle+{C_0\over 2}
\|F_\epsilon\mathbf u\|_{L^2}^2\leq C\|B\mathbf u\|_{H_h^m}^2+
Ch\|B_1\mathbf u\|_{H^{m-1/2}_h}^2+\mathcal O(h^\infty).
\end{equation}
We next claim that, uniformly in $\epsilon$,
\begin{equation}
  \label{e:hyperbolic-commutator-2}
{1\over 2}\langle (F_\epsilon^*F_\epsilon\Im \mathbf P+(\Im \mathbf P)F_\epsilon^*F_\epsilon)\mathbf u,\mathbf u\rangle
\leq C_1h\|F_\epsilon \mathbf u\|_{L^2}^2+Ch^2\|B_1\mathbf u\|_{H^{m-1/2}_h}^2+\mathcal O(h^\infty),
\end{equation}
where $C_1$ is a constant independent of the choice of $f$.
Indeed, the left-hand side of~\eqref{e:hyperbolic-commutator-2} can be written
as
$$
\langle (\Im \mathbf P)F_\epsilon \mathbf u,F_\epsilon\mathbf u\rangle
+{1\over 2}\langle (F_\epsilon^*[F_\epsilon,\Im\mathbf P]-[F_\epsilon^*,\Im\mathbf P]F_\epsilon)\mathbf u,
\mathbf u\rangle.
$$
Since $\sigma_h(\Im\mathbf P)=-q$ is diagonal
and nonpositive, the first term 
is bounded
from above by $C_1h\|F_\epsilon\mathbf u\|_{L^2}^2$ by the sharp G\r arding inequality.
The second term is bounded by $Ch^2\|B_1\mathbf u\|_{H^{m-1/2}_h}^2+\mathcal O(h^\infty)$, since
the principal symbol calculus shows that
$$F_\epsilon^*[F_\epsilon,\Im\mathbf P]-[F_\epsilon^*,\Im\mathbf
P]F_\epsilon\in h^2\Psi^{2m-1}_h $$
uniformly in $\epsilon$.

Combining~\eqref{e:hyperbolic-commutator}, \eqref{e:hyperbolic-commutator-1},
\eqref{e:hyperbolic-commutator-2}, taking $C_0>4C_1$, we get uniformly in $\epsilon$,
$$
{C_0\over 4}\|F_\epsilon\mathbf u\|_{L^2}^2\leq C\|B\mathbf u\|_{H^m_h}^2+
C h^{-1}\|B_1\mathbf P\mathbf u\|_{H^m_h}\|F_\epsilon\mathbf u\|_{L^2}
+Ch\|B_1\mathbf u\|_{H^{m-1/2}_h}^2+\mathcal O(h^\infty).
$$
Therefore, we have uniformly in $\epsilon$,
$$
\|F_\epsilon\mathbf u\|_{L^2}\leq C\|B\mathbf u\|_{H^m_h}+
Ch^{-1}\|B_1\mathbf P\mathbf u\|_{H^m_h}
+Ch^{1/2}\|B_1\mathbf u\|_{H^{m-1/2}_h}+\mathcal O(h^\infty).
$$
Now, $F_\epsilon=S_\epsilon F$ and
$S_\epsilon\to S_0$ in $\Psi^{m+1/2}_h$ as $\epsilon\to 0$;
therefore, $F_\epsilon \mathbf u\to S_0F\mathbf u$ in
$H^{-1}_h$.
Since $\|F_\epsilon\mathbf u\|_{L^2}$ is bounded uniformly in $\epsilon$,
by the compactness of the unit ball in $L^2$ in the weak topology
we get $S_0F\mathbf u\in L^2$; therefore, $F\mathbf u\in H^m_h$, and
$$
\|F\mathbf u\|_{H^m_h}\leq C\|B\mathbf u\|_{H^m_h}+
Ch^{-1}\|B_1\mathbf P\mathbf u\|_{H^m_h}
+Ch^{1/2}\|B_1\mathbf u\|_{H^{m-1/2}_h}+\mathcal O(h^\infty).
$$
It remains to apply the elliptic estimate~\eqref{e:elliptic-est}
together with~\eqref{e:escape-2}.
\end{proof}

To prove Propositions \ref{l:radial1} and \ref{l:radial2} we need the
following 

\begin{lemm}
\label{l:radialesc}
Suppose $ L $ is a radial source in the sense of  definition \eqref{eq:defL}. Then 
there exist:

1. $ f_0 \in C^\infty ( T^* X \setminus 0 ; [ 0 , 1 ] ) $,
  homogeneous of degree $ 0 $ and such that $ f_0 = 1 $ near $L$, $ \supp f_0
  \subset U $, and $ H_p f_0 \leq 0 $;

2.  $ f_1 \in C^\infty ( T^* X \setminus 0 ; [ 0 , \infty ) ) $, homogeneous of
  degree $ 1 $ and such that $f_1\geq c|\xi|$ everywhere and $ H_p f_1 \leq  -c f_1 $ on $U$, for some $
  c> 0 $.

\end{lemm}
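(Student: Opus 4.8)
The plan is to construct $f_1$ by a time‑averaging (adapted‑norm) trick and $f_0$ by taking an infimum of a cutoff along the backward flow and then smoothing it.

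For $f_1$ I would fix the smooth fiber norm $|\cdot|$ appearing in \eqref{eq:defL} and, for a large constant $S>0$ to be chosen, set $f_1(x,\xi):=\big(\int_0^S |e^{-tH_p}(x,\xi)|^2\,dt\big)^{1/2}$. Since $p$ is homogeneous of degree $1$, $e^{tH_p}$ commutes with dilations in the fibers, so $f_1$ is homogeneous of degree $1$; it is smooth on $T^*X\setminus 0$ and, by compactness of the cosphere bundle, satisfies $c_0|\xi|\le f_1\le C_0|\xi|$. Differentiating under the integral sign and changing variables gives $H_p(f_1^2)(x,\xi)=|(x,\xi)|^2-|e^{-SH_p}(x,\xi)|^2$. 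By the second line of \eqref{eq:defL}, $|e^{-SH_p}(x,\xi)|^2\ge C^{-2}e^{2\theta S}|(x,\xi)|^2$ on $U$, so choosing $S$ with $C^{-2}e^{2\theta S}\ge 2$ yields $H_p(f_1^2)\le -|(x,\xi)|^2\le -cf_1^2$ on $U$, hence $H_pf_1\le -\tfrac c2 f_1$ there; adjusting $c$ gives part~2.

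For $f_0$ I would first shrink $U$ and use the first line of \eqref{eq:defL} together with continuity of the flow to produce nested open conic neighbourhoods $L\subset W\subset V$ of $L$, all backward invariant (i.e. $e^{-tH_p}(V)\subset V$ and $e^{-tH_p}(W)\subset W$ for $t\ge 0$), with $\overline W\subset V$ and $\overline V\subset U$; concretely $\kappa(V)$ can be taken to be $\bigcup_{t\ge 0}e^{-tH_p}$ applied to a small metric ball about $\kappa(L)$, the point being that the convergence in \eqref{eq:defL} keeps this union inside any prescribed neighbourhood of $\kappa(L)$. Next I would pick $\chi\in C^\infty(S^*X;[0,1])$ with $\chi\equiv 1$ on $\overline W$ and $\supp\chi\subset V$, and set $\overline f_0(\rho):=\inf_{t\le 0}\chi(e^{tH_p}(\rho))$ on $S^*X$. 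Since backward orbits of points of $\overline V$ converge into $W$ locally uniformly, the infimum runs over a compact range of $t$, so $\overline f_0$ is continuous; it takes values in $[0,1]$, equals $1$ on $W$, is supported in $\overline V$, and is non‑increasing along $e^{tH_p}$, i.e. $H_p\overline f_0\le 0$ in the distributional sense. Finally I would mollify $\overline f_0$ — first along the flow lines, then transversally in flow‑box coordinates — to obtain a smooth function with the same properties, at the cost of an arbitrarily small enlargement of the support (still inside $U$) and shrinking of the set on which it is $1$; pulling back by $\kappa$, and using that $H_p$ descends to the generator of the induced flow on $S^*X$, gives the desired homogeneous degree‑$0$ function with $H_pf_0\le 0$ everywhere.

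The hard part will be the $f_0$ construction: the hypotheses provide no rate for the convergence in \eqref{eq:defL} and $L$ need not be a smooth submanifold, so one cannot simply use a defining function of $L$; the infimum construction supplies the right monotonicity but not regularity, and the two‑stage mollification (along the flow, then transverse to it) is exactly what preserves $H_pf_0\le 0$ while gaining smoothness and keeping the support in $U$. The $f_1$ part, by contrast, is just the computation above once one has the idea of averaging $|e^{-tH_p}\cdot|^2$ along the flow.
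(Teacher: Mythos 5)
Your part~2 is essentially the paper's argument: you average $|e^{-tH_p}(x,\xi)|$ (or its square) over a long window $[0,S]$ and use the expansion estimate in the second line of~\eqref{eq:defL}; the paper uses $f_1=\int_0^{T_1}|e^{-tH_p}|\,dt$ and computes $H_pf_1=|\xi|-|e^{-T_1H_p}(x,\xi)|$, which is the same idea without the squaring. That part is fine.

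Your part~1 diverges from the paper, and the divergence is exactly where the gap is. You build a continuous, flow-monotone $\overline f_0$ via an infimum along backward orbits, and then plan to ``mollify $\overline f_0$ — first along the flow lines, then transversally in flow-box coordinates.'' The first stage (convolving in $t$ along $e^{tV}$) is harmless and does preserve $V\overline f_0\le 0$. The second stage is the problem: the transverse mollification is necessarily local to a flow box, and you must glue the local smoothings via a partition of unity $\{\chi_i\}$. Writing $f_0=\sum_i\chi_i h_i$, one has $Vf_0=\sum_i\chi_iVh_i+\sum_i(V\chi_i)h_i$; the first sum is $\le 0$, but the second has no sign, because the $h_i$ agree only up to the mollification error on overlaps, so $\sum_i(V\chi_i)h_i=\mathcal O(\delta)$ rather than $0$. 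So the sketched two-stage mollification produces $Vf_0\le C\delta$, not $Vf_0\le 0$, and the sign you need is lost precisely at the step you flag as the hard part. There is also no obvious post-processing (e.g.\ replacing $f_0$ by $\frac1T\int_T^{2T}f_0\circ e^{tV}\,dt$) that recovers the exact sign from a $C^0$-close smooth approximation of $\overline f_0$.

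The paper sidesteps all of this. Working on $S^*X$ with $V=\kappa_*H_p$, it takes \emph{any} smooth bump $F_0\in C^\infty(S^*X;[0,1])$ with $F_0=1$ near $\kappa(L)$ and $\supp F_0\subset\kappa(U)$, then uses the first line of~\eqref{eq:defL} to find $T>0$ so large that $e^{-tV}\supp F_0\subset\{F_0=1\}$ for $t\ge T$; this gives the pointwise inequality $F_0\ge F_0\circ e^{TV}$ \emph{exactly} (either $e^{TV}\rho\notin\supp F_0$, or $\rho\in\{F_0=1\}$). Setting $F=\frac1T\int_T^{2T}F_0\circ e^{tV}\,dt$ gives $V(F)=\frac1T\bigl(F_0\circ e^{2TV}-F_0\circ e^{TV}\bigr)\le 0$, with smoothness for free because $F_0$ is smooth from the start and the averaging is an integral of smooth functions. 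You should adopt something like this (average a smooth bump over a large time window and exploit a \emph{pointwise} monotonicity), rather than first building a merely continuous monotone function and then trying to smooth it: the latter forces you into a mollification/patching argument that does not preserve the sign of $H_pf_0$.
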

\begin{proof}
To obtain part~1 we adapt the proof of \cite[Lemma 2.1]{fa-sj}.  Let $ V
= \kappa_* H_p $, where $ \kappa : T^* X\setminus 0 \to S^*X  \simeq (T^*X \setminus 0) / \RR_+ $ is the natural projection.
Since $ p $ is homogeneous of degree $ 1 $, $ \kappa_* H_p $ is a
smooth vector field on $ S^* X $, and the closed
set  $ \kappa ( L ) $  is invariant under the flow $ e^{ - t V } $. 
We will construct $ F \in C^\infty ( S^* X ; [ 0, 1 ]) $ 
such that $ V ( F ) \leq 0 $, $ \supp F \subset \kappa( U ) $ and $ F
= 1 $ on a neighbourhood of $ \kappa ( L ) $.   Then $ f_0 = \kappa^* F
$ will be a function satisfying the condition in part 1. 

To obtain $F$, fix $F_0\in C^\infty(S^*X;[0,1])$ such that $F_0=1$ near $\kappa(L)$
and $\supp F_0\subset \kappa(U)$. By the first assumption in~\eqref{eq:defL}, we have for $T>0$
large enough,
\begin{equation}
  \label{e:nina}
e^{-tV}\supp(F_0)\subset \{F_0=1\},\quad\text{for }t\geq T,
\end{equation}
and by the invariance of $ \kappa ( U ) $ by the flow, 
 $\supp (F_0\circ e^{tV})\subset \kappa(U)$ for all $t\geq T$.
Furthermore, $F_0(\rho)\geq F_0(e^{TV}(\rho))$ for all $\rho$;
indeed, if $e^{TV}(\rho)\in\supp F_0$, then $F_0(\rho)=1$
and otherwise $F_0(e^{TV}(\rho))=0$, and $0\leq F_0\leq 1$ everywhere.
Then the function
$$
F:={1\over T}\int_T^{2T}F_0\circ e^{tV}\,dt,\quad
V(F)={1\over T}(F_0\circ e^{2TV}-F_0\circ e^{TV}),
$$
satisfies the required assumptions.


The proof of part~2 is ``orthogonal'' to the proof of part~1 in the sense
that we are concerned about the radial component of $ H_p $. 
To find $ f_1 $, fix a smooth norm $|\cdot|$ of the fibers of $T^*X$.
By the second part of~\eqref{eq:defL}, we have
for $T_1$ large enough,
$$
|e^{-tH_p}(x,\xi)|\geq 2|\xi|,\quad\text{for }
(x,\xi)\in U,\
t\geq T_1.
$$
Then the function
$$
f_1(x,\xi):=\int_0^{T_1}|e^{-tH_p}(x,\xi)|\,dt,\quad
H_p f_1(x,\xi)=|\xi|-|e^{-T_1H_p}(x,\xi)|,
$$
is homogeneous of degree 1, $0<c|\xi|\leq f_1(x,\xi)\leq c^{-1}|\xi|$ everywhere,
and $H_pf_1(x,\xi)\leq -|\xi|\leq -cf_1(x,\xi)$ for $(x,\xi)\in U$.
\end{proof}

%
\begin{proof}[Proof of Proposition~\ref{l:radial1}]
As before, it is enough to prove part~1. Similarly to~\eqref{e:hyperbolic-int}, it suffices
to prove that for each $B_1\in\Psi^0_h$ elliptic on $\kappa(L)$, there exists
$A\in\Psi^0_h$ elliptic on $\kappa(L)$ such that for each $m\geq m_0$,
\begin{equation}
  \label{e:radial1-int}
\|A\mathbf u\|_{H^m_h}\leq Ch^{-1}\|B_1\mathbf P\mathbf u\|_{H^m_h}+\mathcal O(h^{1/2})\|B_1\mathbf u\|_{H^{m-1/2}_h}
+\mathcal O(h^\infty).
\end{equation}
Indeed, without loss of generality we may assume that $\WFh(B_1)\subset U$;
then by~\eqref{eq:defL}, each backward flow line of $H_p$ starting on $\WFh(B_1)$
reaches $\Ell_h(A)$. Combining~\eqref{e:radial1-int} with 
propagation of singularities (Proposition~\ref{l:hyperbolic}),
we see that for each $B'_1\in\Psi^0_h$ elliptic on $\kappa(L)$, there exists
$A\in\Psi^0_h$ elliptic on $\kappa(L)$ such that for each $m\geq m_0$,
$$
\|A\mathbf u\|_{H^m_h}\leq Ch^{-1}\|B'_1\mathbf P\mathbf u\|_{H^m_h}+\mathcal O(h^{1/2})
\|A\mathbf u\|_{H^{m-1/2}_h}+\mathcal O(h^\infty).
$$
Iterating this estimate, we arrive to
\begin{equation}
  \label{e:radial1-int-2}
\|A\mathbf u\|_{H^m_h}\leq Ch^{-1}\|B'_1\mathbf P\mathbf u\|_{H^m_h}+\mathcal O(h^\infty)
\|A\mathbf u\|_{H^{m_0}_h}+\mathcal O(h^\infty),
\end{equation}
and the $\mathcal O(h^\infty)\|A\mathbf u\|_{H^{m_0}_h}$
error term can be trivially removed provided that $A\mathbf u\in H^{m_0}_h$.

To prove~\eqref{e:radial1-int}, we shrink the conic neighbourhood $U$ of $L$ so that
$\kappa(U)\subset \Ell_h(B_1)$; here $\kappa:T^*X\setminus 0\to S^*X=\partial\overline T^*X$
is the natural projection to the fiber infinity.
Let $f_0,f_1$ be given by Lemma~\ref{l:radialesc} and consider $R>0$ large enough
so that $\supp f_0\cap \{f_1\geq R\}\subset \Ell_h(B_1)$.
Let $\chi\in C^\infty(\mathbb R;[0,1])$ satisfy $\supp\chi\subset (R,\infty)$,
$\chi=1$ on $[2R,\infty)$, and $\chi'\geq 0$ everywhere. Define $f\in C^\infty(\overline T^*X)$ by
\begin{equation}
  \label{e:radial-f}
f(x,\xi)=f_0(x,\xi)\chi(f_1(x,\xi)).
\end{equation}
It follows from Lemma~\ref{l:radialesc} that
$\supp f\subset \Ell_h(B_1)$, $f=1$ near $\kappa(L)$, and $H_p f\leq 0$ everywhere.

We now proceed as in the proof of Proposition~\ref{l:hyperbolic}, putting
$$
\sigma_h(S_\epsilon)=f_2^m\langle\epsilon\xi\rangle^{-1}.
$$
Here $f_2\in C^\infty(\overline T^*X)$ is positive everywhere and
is equal to $f_1$ for large $|\xi|$, in particular for $f_1(x, \xi)\geq R$.
If $f_\epsilon=\sigma_h(S_\epsilon)f$, then similarly to~\eqref{e:f-eps-der}, we find
\begin{equation}
  \label{e:f-eps-der1}
f_\epsilon H_p f_\epsilon=f_2^m\langle\epsilon\xi\rangle^{-1}f_\epsilon H_pf
+f_\epsilon^2\bigg(m{H_p f_2\over f_2}-{\epsilon^2H_p|\xi|^2\over 2\langle\epsilon\xi\rangle^2}\bigg)
\end{equation}
Since $H_pf\leq 0$ and $H_p f_2\leq -cf_2<0$ on $\supp f$, we see that for any fixed $C_0>0$,
$m_0$ large enough depending on $C_0$, and $m\geq m_0$, 
$$
f_\epsilon H_pf_\epsilon+ C_0 f_\epsilon^2\leq 0.
$$
Moreover, $m_0$ can be chosen independently of $B_1$.
For $ \mathbf T_\epsilon $ defined by \eqref{eq:Tep},  the sharp G\r
arding inequality gives, uniformly in $\epsilon$,
$$
\langle \mathbf T_\epsilon \mathbf u,\mathbf u\rangle+C_0\|F_\epsilon \mathbf u\|_{L^2}^2
\leq Ch \|B_1\mathbf u\|_{H^{m-1/2}_h}^2+\mathcal O(h^\infty).
$$
Arguing as in the proof of Proposition~\ref{l:hyperbolic}, we obtain~\eqref{e:radial1-int} with
$A:=F$.
\end{proof}
%
\begin{proof}[Proof of Proposition~\ref{l:radial2}]
We proceed as in the proof of Proposition~\ref{l:radial1}, showing that
for each $B_1\in\Psi^0_h$ elliptic on $\kappa(L)$, there exists
$A\in\Psi^0_h(X)$ elliptic on $\kappa(L)$ and
$B\in\Psi^0_h(X)$ with $\WFh(B)\subset \Ell_h(B_1)\setminus \kappa(L)$
such that for $m\leq -m_0$,
\begin{equation}
 \label{e:radial2-int}
\|A\mathbf u\|_{H^m_h}\leq C\|B\mathbf u\|_{H^m_h}+Ch^{-1}\|B_1\mathbf P\mathbf u\|_{H^m_h}
+\mathcal O(h^{1/2})\|B_1\mathbf u\|_{H^{m-1/2}_h}
+\mathcal O(h^\infty).
\end{equation}
Take $f\in C^\infty(\overline T^*X;[0,1])$ such that $\supp f\subset \Ell_h(B_1)$
and $f=1$ near $\kappa(L)$, and define $f_2$ using Lemma~\ref{l:radialesc}
with the sign of $p$ reversed, so that $H_p f_2\geq cf_2$ on $\supp f$.
We define $S_\epsilon,f_\epsilon$ as in the proof of Proposition~\ref{l:radial2}
and analyse the terms on the right-hand side of~\eqref{e:f-eps-der1}.
The first term
vanishes near $\kappa(L)$ since $f=1$ there. Using the second term, we see that
for each $C_0$, $m_0$ large enough depending on $C_0$, and $m\leq -m_0$,
$$
f_\epsilon H_p f_\epsilon+C_0f_\epsilon^2\leq |\langle\xi\rangle^{m}\sigma_h(B)|^2,
$$
for some choice of $B\in\Psi^0_h$ with $\WFh(B)\subset\Ell_h(B_1)\setminus\kappa(L)$.
By sharp G\r arding inequality, we have uniformly in $\epsilon$
$$
\langle\mathbf T_\epsilon\mathbf u,\mathbf u\rangle+C_0\|F_\epsilon\mathbf u\|_{L^2}^2
\leq C\|B\mathbf u\|_{H^m_h}^2+Ch\|B_1\mathbf u\|_{H^{m-1/2}_h}^2+\mathcal O(h^\infty);
$$
arguing as in the proof of Proposition~\ref{l:hyperbolic}, we obtain~\eqref{e:radial2-int} with
$A:=F$.
\end{proof}

\smallsection{Acknowledgements} 
We would like to thank Colin Guillarmou and Fr\'ederic Naud for
helpful discussions and in particular for pointing out that our result
holds under the condition~\eqref{eq:Poinc} and not just for contact
flows. We would also like to thank the anonymous referees
for corrections and valuable suggestions.
Partial support by the National Science Foundation under the grant
DMS-1201417 is also gratefully acknowledged.

%



\begin{thebibliography}{0}

\bibitem[Ba]{bal} Viviane Baladi,
	\emph{Anisotropic Sobolev spaces and dynamical transfer operators: $C^\infty$ foliations,\/}
	Algebraic and topological dynamics, 123--135,
	Contemp. Math. \textbf{385}, AMS, 2005.
	
\bibitem[BaTs]{b-t} Viviane Baladi and Masato Tsujii,
	\emph{Anisotropic H\"older and Sobolev spaces for hyperbolic diffeomorphisms,\/}
	Ann. Inst. Fourier \textbf{57}(2007), 127--154.

\bibitem[DaDy]{fwl} Kiril Datchev and Semyon Dyatlov,
	\emph{Fractal Weyl laws for asymptotically hyperbolic manifolds,\/}
	Geom. Funct. Anal. \textbf{23}(2013), 1145--1206.

\bibitem[DDZ]{ddz} Kiril Datchev, Semyon Dyatlov, and Maciej Zworski,
	\emph{Sharp polynomial bounds on the number of Pollicott--Ruelle resonances,\/}
	Ergodic Theory Dyn. Syst. \textbf{34}(2014), 1168--1183.
	
\bibitem[Du]{duistermaat} Johannes Duistermaat,
	\emph{On Carleman estimates for pseudo-differential operators,\/}
	Invent. Math. \textbf{17}(1972), 31--43.
	
\bibitem[DFG]{DGF} Semyon Dyatlov, Fr\'ed\'eric Faure, and Colin Guillarmou,
	\emph{Power spectrum of the geodesic flow on hyperbolic manifolds,\/}
	preprint, \arXiv{1403.0256}.

\bibitem[DyGu]{DG} Semyon Dyatlov and Colin Guillarmou, 
	\emph{Pollicott-Ruelle resonances for open systems,\/}
	preprint, \arXiv{1410.5516}.

\bibitem[DyZw]{DZ} Semyon Dyatlov and Maciej Zworski,
	\emph{Stochastic stability of Pollicott-Ruelle resonances,\/}
	preprint, \arXiv{1407.8531}.

\bibitem[FaSj]{fa-sj} Fr\'ed\'eric Faure and Johannes Sj\"ostrand,
	\emph{Upper bound on the density of Ruelle resonances for Anosov flows,\/}
	Comm. Math. Phys. \textbf{308}(2011), 325--364.

\bibitem[FaTs1]{fa-ts} Fr\'ed\'eric Faure and Masato Tsujii, 
    \emph{Band structure of the Ruelle spectrum of contact Anosov flows,\/}
    Comptes rendus~-- Math\'ematique \textbf{351}(2013), 385--391.

\bibitem[FaTs2]{fa-ts2} Fr\'ed\'eric Faure and Masato Tsujii,
    \emph{The semiclassical zeta function for geodesic flows on negatively curved manifolds,\/}
    preprint, \arXiv{1311.4932}.

\bibitem[Fr]{Fr} David Fried,
	\emph{Meromorphic zeta functions for analytic flows,\/}
	Comm. Math. Phys. \textbf{174}(1995), 161--190.

\bibitem[GiLiPo]{glp} Paolo Giulietti, Carlangelo Liverani, and Mark Pollicott,
	\emph{Anosov flows and dynamical zeta functions,\/} 
 	Ann. of Math. (2) {\bf 178}(2013),  687--773.	 

\bibitem[GoLi]{g-l} S\'ebastien Gou\"ezel and Carlangelo Liverani,
	\emph{Banach spaces adapted to Anosov systems,\/}
	Ergodic Theory Dynam. Systems \textbf{26}(2006), 189--217.

\bibitem[G1]{G1} Colin Guillarmou,
	\emph{Invariant distributions and X-ray transform for Anosov flows,\/}
	preprint, \\ \arXiv{1408.4732}.

\bibitem[G2]{G2} Colin Guillarmou,
	\emph{Lens rigidity for manifolds with hyperbolic trapped set,\/}
	preprint, \\ \arXiv{1412.1760}.

\bibitem[Gu]{Gu} Victor Guillemin,
	\emph{Lectures on spectral theory of elliptic operators,\/}
	Duke Math. J. \textbf{44}(1977), 485--517.

\bibitem[GuZw97]{GZ} Laurent Guillop\'e and Maciej Zworski,
	\emph{Scattering asymptotics for Riemann surfaces,\/}
	Ann. of Math. (2) \textbf{145}(1997), 597--660.

\bibitem[GuZw99]{gz} Laurent Guillop\'e and Maciej Zworski,
	\emph{The wave trace for Riemann surfaces,\/}
	Geom. Funct. Anal. \textbf{9}(1999), 1156--1168.

\bibitem[H\"o]{ho-pc} Lars H\"ormander,
	\emph{On the existence and the regularity of solutions of linear pseudo-differential equations,\/}
	Enseignement Math. (2) \textbf{17}(1971), 99--163.

\bibitem[H\"oI--II]{ho1} Lars H\"ormander,
       \emph{The Analysis of Linear Partial Differential Operators,
         Volumes I and II,\/}     	Springer, 1983.

\bibitem[H\"oIII--IV]{ho3} Lars H\"ormander,
       \emph{The Analysis of Linear Partial Differential Operators,
         Volumes III and IV,\/}     	Springer, 1985.

\bibitem[JiZw]{JiZw} Long Jin and Maciej Zworski,
	\emph{A local trace formula for Anosov flows,\/}
	with an appendix by Fr\'ed\'eric Naud,
	preprint, \arXiv{1411.6177}.

\bibitem[Le]{Leb} Patricio Leboeuf,
	\emph{Periodic orbit spectrum in terms of Ruelle--Pollicott resonances,\/}
	Phys. Rev. E (3) \textbf{69}(2004), 026204.
	
\bibitem[Li]{liv2} Carlangelo Liverani,
	\emph{Fredholm determinants, Anosov maps and Ruelle resonances,\/}
	Discrete Contin. Dyn. Syst. \textbf{13}(2005), 1203--1215.

\bibitem[Me82]{mel1} Richard B. Melrose,
	\emph{Scattering theory and the trace formula of the wave group,\/}
	J.~Funct.~Anal. \textbf{45}(1982), 429--440.

\bibitem[Me94]{mel} Richard B. Melrose,
	\emph{Spectral and scattering theory for the Laplacian on asymptotically Euclidian spaces,\/}
	in \emph{Spectral and scattering theory} (M. Ikawa, ed.), Marcel Dekker, 1994.

\bibitem[Po]{Po} Mark Pollicott,
	\emph{On the rate of mixing of Axiom A flows,}
	Invent. Math. \textbf{81}(1986), 147--164.

\bibitem[Ru76]{Ru1} David Ruelle,
	\emph{Zeta-functions for expanding maps and Anosov flows,\/}
 	Invent. Math. \textbf{34}(1976), 231--242.

\bibitem[Ru86]{Ru2} David Ruelle,
	\emph{Resonances of chaotic dynamical systems,\/}
	Phys.~Rev.~Lett. \textbf{56}(1986), 405--407.

\bibitem[Ru]{Ru} Hans Henrik Rugh, 
	\emph{Generalized Fredholm determinants and Selberg zeta functions for Axiom A dynamical systems,\/}
	Erg. Theory Dyn. Syst. \textbf{16}(1996), 805--819.

\bibitem[SjZw]{sz} Johannes Sj\"ostrand and Maciej Zworski, 
	\emph{Lower bounds on the number of scattering poles II,\/}
	J.~Funct.~Anal. \textbf{123}(1994), 336--367.

\bibitem[Sm]{Sm} Steven Smale,
	\emph{Differentiable dynamical systems,\/}
	Bull. Amer. Math. Soc. \textbf{73}(1967), 747--817.

\bibitem[Un]{unterberger} Andr\'e Unterberger,
	\emph{R\'esolution d'\'equations aux d\'eriv\'ees partielles dans des espaces de distributions d'ordre
	de r\'egularit\'e variable,\/}
	Ann. Inst. Fourier \textbf{21}(1971), 85--128.

\bibitem[Va]{vasy1} Andr\'as Vasy,
	\emph{Microlocal analysis of asymptotically hyperbolic and Kerr--de Sitter 
spaces,\/} 
	with an appendix by Semyon Dyatlov,
	Invent. Math. \textbf{194}(2013), 381--513.

\bibitem[Zw96]{zw1} Maciej Zworski,
	\emph{Poisson formula for resonances,\/}
  	S\'eminaire \'EDP, \'Ecole Polytechnique, XIII-1-12, 1996--97.

\bibitem[Zw]{ev-zw} Maciej Zworski,
	\emph{Semiclassical analysis,\/}
	Graduate Studies in Mathematics \textbf{138} AMS, 2012.

\end{thebibliography}
\end{document}